\theoremstyle{plain}
\newtheorem{theorem}{Theorem}[]
\newtheorem{theorem2}{Theorem}[section]
\newtheorem{lemma}[theorem2]{Lemma}
\theoremstyle{definition}
\newtheorem{definition}[theorem2]{Definition}
\theoremstyle{remark}
\numberwithin{equation}{section}
\newcommand{\tr}{\mathrm{tr}}
\renewcommand{\div}{\mathrm{div}}
\begin{document}
\title{The Fubini--Study metric on an `odd' Grassmannian is rigid}
\author{Stuart James Hall}

\begin{abstract}
	Following the ideas of Gasqui and Goldschmidt, we give an explicit description of the infinitesimal Einstein deformations admitted by the Fubini--Study metric on complex Grassmannians $G_{m}(\mathbb{C}^{n+m})$ with $m,n\geq 2$. The deformations were first shown to exist by Koiso in the 1980s but it has remained an open question as to whether they can be integrated to give genuine deformations of the Fubini--Study metric. We show that when $n+m$ is odd, the answer is no. 
	
\end{abstract}
\maketitle
\section{Introduction}
\subsection{The main theorem and method of proof}
Given a fixed Einstein metric $g_{0}$ on a closed manifold $\mathcal{M}$, a natural question to ask is whether there are other non-isometric Einstein metrics that are in some sense near to $g_{0}$, in particular, whether $g_{0}$ can be deformed through a one-parameter family of geometrically distinct Einstein metrics. The foundational work on this problem was carried out in the 1980s by Koiso in \cite{KoiOsaka1},  \cite{KoiOsaka2}, and \cite{Koi_EMCS}, where such questions were considered for the canonical metrics on compact irreducible symmetric spaces. Koiso was able to demonstrate that, for $m,n\geq 2$, the canonical metric on the complex Grassmannians $G_{m}(\mathbb{C}^{n+m})$ (which we will refer to as the Fubini--Study metric) admits deformations to first order, known as infinitesimal Einstein deformations (EIDs). However, it was left as an open question as to whether any of the EIDs could be integrated to produce a genuine curve of Einstein metrics passing through the Fubini--Study metric; it is this problem that we take up in this article and we are able to demonstate that for `half' of the Grassmannians, the answer is no.
\begin{theorem}\label{Thm:Main}
Let  $m,n\in \mathbb {N}$ with $m,n\geq 2$ and $n+m$ odd. The Fubini--Study metric on  $G_{m}(\mathbb{C}^{n+m})$ is rigid.
\end{theorem}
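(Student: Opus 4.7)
The plan is to rule out, for every non-zero infinitesimal Einstein deformation (EID) $h$ of the Fubini--Study metric $g_{0}$, the existence of a smooth one-parameter family of Einstein metrics tangent to $h$, by showing that the classical Koiso obstruction at second order never vanishes. I take as input the explicit description of the space $V$ of EIDs as an irreducible representation of the isometry group $G=\mathrm{PSU}(n+m)$, constructed in the preceding sections following Gasqui and Goldschmidt.

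Expanding $g_{t}=g_{0}+th+t^{2}k+O(t^{3})$ in the Einstein equation $\Ric(g_{t})=\lambda(t)g_{t}$ and collecting terms at order $t^{2}$, the existence of a second-order correction $k$ is obstructed by the projection $Q(h):=\pi_{V}(\Ric''_{g_{0}}(h,h))$ of the second variation of the Ricci tensor onto $V$, after the usual scalar corrections have been absorbed. The map $Q\colon\mathrm{Sym}^{2}V\to V$ is $G$-equivariant, and rigidity of $g_{0}$ follows once I show $Q(h)\neq 0$ for every non-zero $h\in V$.

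My first step is representation-theoretic: determine the space $\mathrm{Hom}_{G}(\mathrm{Sym}^{2}V,V)$, which I expect to be at most one-dimensional, so that $Q$ is pinned down by a single real constant. This is where the parity hypothesis should enter, as the decomposition of $\mathrm{Sym}^{2}V$ under $G$ for the relevant highest weight of $V$ is sensitive to the parity of $n+m$; the odd case should be precisely the one in which the equivariant template for $Q$ exists in a manageable form. Once the shape of $Q$ is pinned down, the second step is to confirm that the constant is non-zero by evaluating $Q(h_{0})$ (or equivalently the cubic $\Phi(h_{0})=\langle Q(h_{0}),h_{0}\rangle_{L^{2}}$) on a test EID $h_{0}$ with large stabiliser in $G$, using Koiso's pointwise formula for $\Ric''_{g_{0}}(h,h)$ in terms of $h$ and $\Riem_{g_{0}}$.

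The main technical obstacle I foresee is this curvature evaluation: even after the reduction to a single scalar, carrying out the algebra---while correctly tracking the trace and gauge corrections in $\Ric''_{g_{0}}(h,h)$ and the Fubini--Study curvature at $h_{0}$---is delicate, and this is where the explicit representation-theoretic model for $V$ built earlier in the paper will do most of the work. A secondary obstacle is the final step from $Q(h_{0})\neq 0$ to $Q(h)\neq 0$ on all of $V\setminus\{0\}$: since $\{Q=0\}$ is a $G$-invariant real algebraic subvariety of the irreducible $V$, it must be a union of closed $G$-orbits, and ruling these out should require only a few further evaluations on orbit representatives.
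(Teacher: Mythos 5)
Your overall architecture matches the paper's: both use Koiso's second-order obstruction, both exploit the fact that the projection of $\mathcal{E}''(h,h)$ onto the space of EIDs is a $G$-equivariant map $s^{2}(\mathfrak{su}_{n+m})\to\mathfrak{su}_{n+m}$ lying in a one-dimensional Hom-space, and both reduce the problem to one explicit curvature/integral evaluation on a well-chosen test deformation. (In the paper that evaluation is done in local coordinates on the big cell of the Grassmannian, with the resulting moments of $\tr(U^{-\alpha})$ computed via Adler--van Moerbeke's reduction to Selberg-type integrals; the test element is $\gamma=\mathrm{Diag}(-n,\dots,-n,m,\dots,m)$.)

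However, two steps of your plan are off, and the second is a genuine gap. First, you locate the parity hypothesis in the wrong place: $\mathrm{Hom}_{G}(s^{2}(\mathfrak{g}),\mathfrak{g})$ is one-dimensional for $G=SU_{N}$, $N\geq 3$, \emph{regardless} of the parity of $N$ (the generator is the trace-free part of $\gamma\mapsto\gamma^{2}$, equivalently the invariant cubic $\tr(\gamma^{3})$), so the "equivariant template" for $Q$ exists in exactly the same form for $N$ even and odd. Parity enters only at the very end, through the zero locus of this generator. Second, your proposed globalization --- "the zero set $\{Q=0\}$ is a union of closed $G$-orbits, rule these out by a few further evaluations on orbit representatives" --- does not work as stated: $V$ contains a continuum of $G$-orbits (parametrized by eigenvalue spectra), every $G$-invariant subset is trivially a union of closed orbits since $G$ is compact, and nothing bounds the number of orbits you would have to test. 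The correct argument is elementary and explicit: once $Q$ is known to be a non-zero multiple of the generator, $Q(h_{\gamma})=0$ forces the trace-free Hermitian matrix $\gamma$ to satisfy $\gamma^{2}=c\,\mathbb{I}$, hence to have exactly two eigenvalues $\pm\sqrt{c}$ with equal multiplicities, which is impossible when $N=n+m$ is odd. This is precisely why the theorem is stated only for $n+m$ odd; for $n+m$ even the generator does have zeros and a single non-vanishing evaluation cannot rule out all EIDs. You should replace your final paragraph with this linear-algebra observation, and be aware that the "secondary obstacle" you defer is where the paper spends essentially all of its effort.
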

In order to prove this result, we first describe the deformations of the Fubini--Study metric in a concrete manner. Koiso's original proof for the existence of these EIDs was non-constructive and based upon the analysis of eigenspaces of the Lichnerowicz Laplacian for symmetric spaces using tools coming from representation theory \cite{KoiOsaka1}. In their monograph \cite{GGbook}, Gasqui and Goldschmidt described a concrete construction of the EIDs, focussing mainly upon the case when $n=m$ (though the idea for the construction in other cases is certainly implicit in their work). Their method of proof was also representation theoretic in nature. In Section \ref{Sec:3} we follow the idea of the construction in  \cite{GGbook} but use fairly elementary complex differential geometry to prove the construction yields EIDs of the Fubini--Study metric (see Theorem \ref{Thm:deform} in Section \ref{Sec:3}).\\
\\
In \cite{KoiOsaka2}, Koiso developed the second order obstruction theory for EIDs; in particular, he showed that a certain integral quantity must vanish in order for the deformation to be unobstructed at second order (see Equation \ref{Koiobs} in Section \ref{Sec:2}). Using the explicit complex differential-geometric description of the deformations, we are able to adapt a method of Adler and van Moerbeke \cite{AvM} and compute the obstruction integral for a particular deformation. We can then appeal to the method of \cite{BHMW} and argue that as the integral does not vanish for the particular defomation, when $n+m$ is odd, it does not vanish for any deformation. Thus we conclude all that all the infinitesimal deformations are obstructed and so the metric is rigid.\\
\\
Our proof of Theorem (\ref{Thm:Main}) can be adapted to show that, apart from in the case when $n=m$, most of the EIDs on $Gr_{m}(\mathbb{C}^{n+m})$ when $n+m$ is even are obstructed at second order. It seems likely that the remaining EIDs are also obstructed but at higher orders. However, to the best of the author's knowledge, the formulae for third order variations of the Einstein equations, that would be needed to produce an obstruction to integrability at this order, have not been calculated. 
\subsection{The recent work of Nagy, Semmelmann, and Schwahn}

Theorem \ref{Thm:Main} has recently been proved independently by Schwahn and Semmelmann \cite{SchSem24}. Their method builds on work by Nagy and Semmelmann \cite{NagSem23} who gave a proof for the special case when $m=2$ (here the Grassmannian is also a Quaternionic K\"ahler manifold and this extra structure was exploited). In \cite{NagSem23}, the authors reformulate Koiso's obstruction and express it in a coordinate-free manner. They also study in detail the case of (non-K\"ahler) deformations of K\"ahler-Einstein manifolds and give a new criterion for integrability. It is this new criterion that is used in \cite{SchSem24} and the calculations are accomplished using Lie-theoretic methods which are totally different from the coordinate based calculations we make in this paper. The authors actually accomplish a bit more in \cite{SchSem24} as they characterise the unobstructed infinitesimal deformations in the case that $n+m$ is even as well as deal with the more awkward case when $n=m$.
\subsection{Revisiting Koiso's original example of $\mathbb{CP}^{2n}\times\mathbb{CP}^{1}$}
In the paper \cite{KoiOsaka2}, Koiso was able to demonstrate that the product of Fubini-Study metrics on $\mathbb{CP}^{2n}\times\mathbb{CP}^{1}$ admits EIDs but that none of the deformations is integrable. Koiso's method of proof of this fact is a blueprint for the proof of our Theorem \ref{Thm:Main}: describe the space of deformations explicitly and then use the description to calculate Koiso's obstruction integral. As part of his computations, Koiso used several identities that were proved by using ingenious juggling of standard tricks in Riemannian geometry (e.g. repeated integration by parts then switching appropriate derivatives and manipulating any curvature terms that are introduced). In Section \ref{Sec:5}, we recover these identities using the methods of this paper thus providing a useful check that the somewhat complicated `local' calculations we are making do indeed recover identities proved using a different method.\\
\\
{\it Acknowledgements:} The author began work on this problem during a research stay with Ruadha\'i Dervan at the University of Glasgow in November 2022. Some of the final calculations were completed during a stay in February 2024 at the Isaac Newton Institute as a participant in the programme `New Equivariant Methods in Algebraic and Differential Geometry'. The author would like to thank these institutions for providing excellent research environments and to thank Ruadha\'i for the invitations, his support, and useful discussions about the project. The author would also like to thank Paul Schwahn and Uwe Semmelmann for useful discussions and for the invitation to speak about this project at a workshop on Einstein manifolds organised by them in Stuttgart in October 2023.

\section{Background on Hermitian variations of K\"ahler--Einstein metrics} \label{Sec:2}

The material in this section is well explained in chapter 12 of \cite{Bes}.  
\subsection{First order variations}
Given an Einstein manifold $(\mathcal{M}^{n},g)$, if there were a smooth one-parameter family of Einstein metrics $g(t)$ with $g(0)=g$, then we consider what equations the derivative of the curve $\dot{g}(0)$ would have to satisfy. Of course we can always produce such curves by homothetic scaling or by acting by a one-parameter subgroup of diffeomorphisms so we further require that our curve is not generated by these procedures. Koiso showed that if no such curve exists, then the metric $g$ is isolated in the moduli space of Einstein metrics and we call such a metric {\it rigid}.
\begin{definition}[Infinitesimal Einstein Deformation]
	Let $(\mathcal{M}^{n},g)$ be an Einstein manifold such that ${\mathrm{Ric}(g)=\lambda g}$ with $\lambda>0$. An \textit{infinitesimal Einstein deformation} (EID) is a section $h\in \Gamma(s^{2}(T^{\ast}\mathcal{M}))$ satisfying the following conditions:
	\begin{eqnarray}
	\mathrm{tr}_{g}(h) = 0, \label{trace_free}\\
	\div(h)=0, \label{div_free}\\
	 \Delta h +2\mathrm{Rm}(h) = 0, \label{Lin_Ein}
	\end{eqnarray}
	where $\Delta$ is the connection Laplacian and $\mathrm{Rm}$ is the curvature operator acting on symmetric 2-tensors. We denote the space of EIDs by $\varepsilon(g)$. 
\end{definition}
Equations (\ref{trace_free}) and (\ref{div_free}) ensure that the deformations do not simply come from homothetic scaling or from acting by a one-parameter subgroup of diffeomorphisms; tensors that are both trace and divergence-free are known as transverse trace-free or `TT' tensors. Equation (\ref{Lin_Ein}) is the linearised Einstein equation restricted to the set of TT tensors. As Equation (\ref{Lin_Ein}) is elliptic, the space $\varepsilon(g)$ is finite-dimensional. If $\varepsilon(g) = \{0\}$, then the metric $g$ is rigid.\\
\\ 
If the manifold $(\mathcal{M}^{n},g)$ is K\"ahler--Einstein and the EID $h$ is also invariant with respect to the complex structure $J$, then the equations definining an EID can be conveniently expressed as conditions involving the two-form associated to $h$, $\sigma \in \Omega^{(1,1)}(\mathcal{M})$:
\begin{eqnarray}
\Lambda(\sigma)=0, \\
\bar{\partial}^{\ast}\sigma=0, \\
\Delta_{\bar{\partial}}\sigma = \lambda \sigma,     
\end{eqnarray}
where $\Lambda$ is the adjoint of the Lefschetz operator, $\bar{\partial}^{\ast}$ is the usual adjoint of the Dolbeault operator $\bar{\partial}$, and ${\Delta_{\bar{\partial}}=\bar{\partial}\bar{\partial}^{\ast}+\bar{\partial}^{\ast}\bar{\partial}}$. The translation of the `TT' equations follows from the fact that the complex structure is parallel; the translation of the linearised Einstein condition follows from a Weitzenb\"ock identity for two-forms.\\
\\
We state here the following result of Koiso listing which compact irreducible symmetric spaces admit EIDs.
\begin{theorem2}[Koiso, Theorem 1.1 in \cite{KoiOsaka1}  - see also \cite{GGbook} Proposition 2.40]\label{Infdef_thm}
	Let $(\mathcal{M},g)$ be a compact irreducible symmetric space. Then the space of EIDs $\varepsilon(g)=\{0\}$, except in the following cases:
	\begin{enumerate}[(i)]
		\item $\mathcal{M}=SU_n$ with $n\geq 3$, here $\varepsilon(g)\cong \mathfrak{su}_{n}\oplus\mathfrak{su}_n$,
		\item $\mathcal{M}=SU_n/SO_n$ with $n\geq 3$, here $\varepsilon(g)\cong \mathfrak{su}_n$,
		\item $\mathcal{M}=SU_{2n}/Sp_n$ with $n\geq 3$ here  $\varepsilon(g)\cong \mathfrak{su}_{2n}$,
		\item $\mathcal{M}=SU_{n+m}/S(U_m \times U_n)$ with $n\geq m\geq2$, here $\varepsilon(g)\cong \mathfrak{su}_{n+m}$,
		\item $\mathcal{M}=E_{6}/F_{4}$, here $\varepsilon(g)\cong \mathfrak{e}_{6}$.
	\end{enumerate} 
\end{theorem2}
The complex Grassmannians are spaces (iv) on this list and are the only spaces on the list where the metric is K\"ahler--Einstein (i.e. they are the only Hermitian symmetric spaces on the list).

\subsection{Second order variations}
We denote by $\mathrm{Met}(\mathcal{M})\subset \Gamma(s^{2}(T^{\ast}\mathcal{M}))$ the set of Riemannian metrics on $\mathcal{M}^{n}$. One can consider Einstein metrics as zeros of the Einstein operator ${\mathcal{E}:\mathrm{Met}(\mathcal{M})\rightarrow \Gamma(s^{2}(T^{\ast}\mathcal{M}))}$ given by
 \[
 \mathcal{E}(g) = \mathrm{Ric}(g) - \frac{\int_{\mathcal{M}}\mathrm{Scal}_{g} dV_{g}}{n\mathrm{Vol}(\mathcal{M})}g.
 \]
 If $h$ is an EID, then the curve $g_{1}(t)=g+th$ solves the Einstein equations to first order in the sense that $\mathcal{E}(g_{1}(0))=0$ and
 \[
 \frac{d}{dt}\mathcal{E}(g_{1}(t))\bigg|_{t=0}=0.
 \]
 Koiso investigated the conditions under which it is possible to find $h_{2}\in \Gamma(s^{2}(T^{\ast}\mathcal{M}))$ such that the curve
 \[
 g_{2}(t) =g+th+\frac{t^{2}}{2}h_{2}
 \]
 solves the Einstein equations to second order in the sense that $\dfrac{d^{k}}{dt^{k}}\mathcal{E}(g_{2}(t))\bigg|_{t=0}=0$ for $k=0,1,2$. If it is possible to find such an $h_{2}$ then we say that the EID $h$ is \textit{integrable to second order}.  If it is not possible to find such an $h_{2}$, then we say $h$ is \textit{obstructed at second order}; in particular, $h$ cannot be tangent to a genuine non-trivial deformation of $g$ through Einstein metrics. If all of the EIDs in $\varepsilon(g)$ are obstructed to second order, then $g$ is rigid.
 \begin{lemma}[Koiso, Lemma 4.7 in \cite{KoiOsaka2}]\label{Koilem1}
   	Let $(\mathcal{M},g)$ be an Einstein manifold. Then an EID $h \in \varepsilon(g)$ is integrable to second order if and only if $\mathcal{E}''(h,h) \in \varepsilon(g)^{\perp}$. Here the orthogonal complement is with respect to the $L^{2}$-inner product on $s^{2}(T^{\ast}\mathcal{M})$ induced by $g$.
\end{lemma}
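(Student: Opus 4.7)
The plan is to Taylor expand the Einstein operator along $g_2(t) = g + th + \tfrac{t^2}{2}h_2$, reducing the problem to linear Fredholm theory for $\mathcal{E}'$. The expansion
$$\mathcal{E}(g_2(t)) = \mathcal{E}(g) + t\, \mathcal{E}'(h) + \frac{t^2}{2}\bigl(\mathcal{E}''(h,h) + \mathcal{E}'(h_2)\bigr) + O(t^3)$$
has a vanishing zeroth term because $g$ is Einstein, and a vanishing first term because $h \in \varepsilon(g)$: the normalisation constant $\int \mathrm{Scal}_g\, dV_g/(n\vol)$ contributes nothing to first order, since its derivative involves $\int \langle \Ric, h\rangle\, dV$ and $\tfrac{1}{2}\int \tr_g h\, dV$, both of which vanish for a traceless $h$ on an Einstein background. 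Hence integrability at second order is equivalent to solvability in $h_2$ of the single linear equation
\begin{equation}
\mathcal{E}'(h_2) = -\mathcal{E}''(h,h). \label{pf:lin}
\end{equation}

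The next step is to record the two structural facts about the linearised operator that drive the argument. First, $\mathcal{E}'$ is formally self-adjoint with respect to the $L^2$ inner product; this can be checked either by a direct integration-by-parts computation using the standard formula for the linearisation of $\Ric$, or more cleanly by observing that (up to its volume and scalar normalisations) $\mathcal{E}$ is the $L^2$-gradient of a smooth scalar functional, so its derivative is a symmetric bilinear form. Second, via the Berger--Ebin orthogonal decomposition
$$\Gamma(s^2 T^*\mathcal{M}) = \mathrm{TT} \oplus \{\mathcal{L}_X g : X \in \Gamma(T\mathcal{M})\} \oplus \{fg : f \in C^{\infty}(\mathcal{M})\},$$
the operator $\mathcal{E}'$ restricts to a self-adjoint elliptic operator on the TT sector whose kernel, by the very definition of an EID, is exactly $\varepsilon(g)$.

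For the forward implication, suppose (\ref{pf:lin}) has a solution $h_2$. For any $k \in \varepsilon(g)$, pair with $k$ in $L^2$ and use self-adjointness together with $\mathcal{E}'(k)=0$:
$$\langle \mathcal{E}''(h,h), k\rangle_{L^2} = -\langle \mathcal{E}'(h_2), k\rangle_{L^2} = -\langle h_2, \mathcal{E}'(k)\rangle_{L^2} = 0,$$
giving $\mathcal{E}''(h,h) \in \varepsilon(g)^{\perp}$. For the converse, project both sides of (\ref{pf:lin}) onto the three Berger--Ebin summands. Diffeomorphism and homothety equivariance of $\mathcal{E}$ make the Lie-derivative and pure-trace components of $h_2$ free to match whatever appears on the right. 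The TT component of $-\mathcal{E}''(h,h)$ lies in $\varepsilon(g)^{\perp}$ by hypothesis, hence in the image of the self-adjoint elliptic operator $\mathcal{E}'|_{\mathrm{TT}}$ by the Fredholm alternative; matching this with the TT component of $h_2$ completes the construction.

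The main obstacle is the bookkeeping for the Berger--Ebin decomposition: one must verify that $\mathcal{E}'$ interacts suitably with this splitting on an Einstein background, and that the pure-trace and Lie-derivative components of $\mathcal{E}''(h,h)$ can always be absorbed into $h_2$ without disturbing the TT equation. Once this technical point is in hand, the result is a clean, essentially formal consequence of the self-adjoint Fredholm alternative applied to the elliptic restriction of $\mathcal{E}'$ to transverse traceless tensors.
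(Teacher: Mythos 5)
The paper offers no proof of this lemma; it is quoted verbatim from Koiso (Lemma 4.7 of \cite{KoiOsaka2}), so there is nothing internal to compare against. Judged on its own merits, your reduction to the solvability of $\mathcal{E}'(h_{2}) = -\mathcal{E}''(h,h)$, and the forward implication via formal self-adjointness of $\mathcal{E}'$ at an Einstein metric, are correct and are essentially Koiso's argument.

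The converse, however, contains a genuine error. You claim that ``diffeomorphism and homothety equivariance of $\mathcal{E}$ make the Lie-derivative and pure-trace components of $h_{2}$ free to match whatever appears on the right.'' This is backwards: diffeomorphism equivariance gives $\mathcal{E}'(\mathcal{L}_{X}g) = \mathcal{L}_{X}\mathcal{E}(g) = 0$ on an Einstein background, so the Lie-derivative directions lie in the \emph{kernel} of $\mathcal{E}'$ and contribute nothing to the left-hand side; they cannot absorb the corresponding component of $-\mathcal{E}''(h,h)$. By the same token, if you apply the self-adjoint Fredholm alternative naively to the full operator, its cokernel contains all of $\{\mathcal{L}_{X}g\}$, and solvability would appear to require $\mathcal{E}''(h,h)\perp\mathcal{L}_{X}g$ for every $X$ --- a condition absent from the statement. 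What is actually needed, and what Koiso proves, is that this component of $\mathcal{E}''(h,h)$ vanishes automatically: one differentiates the equivariance $\mathcal{E}(\phi^{\ast}g') = \phi^{\ast}\mathcal{E}(g')$ twice along the curve $g+th$, using $\mathcal{E}(g)=0$, $\mathcal{E}'(h)=0$ and $\div h = 0$, to obtain a Bianchi-type identity forcing $\langle\mathcal{E}''(h,h),\delta^{\ast}\alpha\rangle_{L^{2}}=0$. The pure-trace direction likewise requires an argument rather than a declaration of freedom: $\mathcal{E}'(fg)$ involves inverting an operator of the form $\Delta - \mathrm{const}$ on functions, and one must check that the relevant constant is not an obstruction (this is where the hypothesis that $g$ is Einstein with $\lambda>0$, and the structure of the normalisation term in $\mathcal{E}$, enter). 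Without these two verifications the converse does not close; with them, your Fredholm-alternative skeleton does yield the lemma.
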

   \noindent Using this result, we see that a necessary condition for $h$ to be integrable is the vanishing of the quantity $\langle \mathcal{E}''(h,h),h\rangle_{L^{2}}$. This quantity was also computed by Koiso.
   \begin{lemma}[Koiso, Lemma 4.3 in \cite{KoiOsaka2}]
   	Let $(\mathcal{M},g)$ be an Einstein metric with Einstein constant $\lambda>0$ and let $h \in \varepsilon(g)$. Then an obstruction to the integrability of $h$ to order two is given by the nonvanishing of the quantity
   	\begin{small}
   		\begin{equation}\label{Koiobs}
   		\mathcal{I}(h) :=2\lambda\langle h_{i}^{k}h_{kj},h_{ij}\rangle_{L^{2}}+3\langle\nabla_{i}\nabla_{j}h_{kl},h_{ij}h_{kl} \rangle_{L^{2}}-6\langle\nabla_{i}\nabla_{l}h_{kj},h_{ij}h_{kl}\rangle_{L^{2}},  
   		\end{equation}
   	\end{small}
   	where each of the brackets denotes the $L^{2}$-inner product induced by the metric $g$ on the appropriate bundle.
   \end{lemma}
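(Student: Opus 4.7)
The plan is to expand the Einstein operator $\mathcal{E}(g+th)$ to second order along the affine path $g+th$, pair the result with $h$ in $L^{2}$, and simplify using the properties of $h$ as an EID. Since $h \in \varepsilon(g)$, the linearised Einstein equation (\ref{Lin_Ein}) together with the TT conditions is equivalent to $\mathcal{E}'(g)(h)=0$, so the Taylor expansion of $\mathcal{E}(g+th)$ begins at order $t^{2}$ with leading coefficient $\tfrac{1}{2}\mathcal{E}''(g)(h,h)$. By Lemma \ref{Koilem1}, integrability to second order requires $\mathcal{E}''(g)(h,h)\in\varepsilon(g)^{\perp}$, so the pairing with $h\in\varepsilon(g)$ must vanish; thus the nonvanishing of $\langle\mathcal{E}''(g)(h,h),h\rangle_{L^{2}}$ obstructs integrability at second order, and the content of the lemma is that this pairing equals $\mathcal{I}(h)$.

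The first step is to compute $\mathrm{Ric}''(g)(h,h)$ by expanding $R_{ij}(g+th)$ in $t$ using the series $g^{ij}(t)=g^{ij}-th^{ij}+t^{2}h^{ik}h_{k}^{\,j}+O(t^{3})$ and the standard formula for $R_{ij}$ in terms of Christoffel symbols. This yields a combination of terms of schematic shape $h\ast\nabla^{2}h$, $(\nabla h)\ast(\nabla h)$, and $\mathrm{Rm}\ast h\ast h$. The normalising factor $C(t):=\frac{\int \mathrm{Scal}_{g+th}\,dV_{g+th}}{n\,\mathrm{Vol}(g+th)}$ in $\mathcal{E}$ satisfies $C'(0)=0$, because the TT conditions force both $\mathrm{Scal}'(h)=-\lambda\,\mathrm{tr}_{g}(h)=0$ and $\mathrm{Vol}'(h)=\tfrac{1}{2}\int\mathrm{tr}_{g}(h)\,dV=0$. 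Hence the contribution of the normalising term to $\mathcal{E}''(g)(h,h)$ is $-C''(0)\,g$, which pairs trivially with $h$ since $\mathrm{tr}_{g}(h)=0$. Consequently $\langle\mathcal{E}''(g)(h,h),h\rangle_{L^{2}}=\langle\mathrm{Ric}''(g)(h,h),h\rangle_{L^{2}}$, and it suffices to analyse the latter.

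Once $\mathrm{Ric}''(g)(h,h)$ is paired with $h$, the plan is to integrate by parts to rearrange the second-derivative terms into the canonical forms $\nabla_{i}\nabla_{j}h_{kl}$ and $\nabla_{i}\nabla_{l}h_{kj}$ that appear in $\mathcal{I}(h)$, using the TT conditions to kill the divergence-type pieces produced as by-products. The Einstein condition $\mathrm{Ric}(g)=\lambda g$ turns the Ricci contractions into the scalar factor $\lambda$, and the linearised Einstein equation (\ref{Lin_Ein}) permits trading any residual connection Laplacian $\Delta h$ for $-2\,\mathrm{Rm}(h)$; it is this trade, combined with the Einstein condition, that produces the $2\lambda\,\langle h_{i}^{\,k}h_{kj},h_{ij}\rangle$ term.

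The main obstacle is the careful bookkeeping required to obtain the precise numerical coefficients $2\lambda$, $3$, and $-6$. The second variation of Ricci contains roughly a dozen distinct contributions, and a number of covariant-derivative commutators $[\nabla_{i},\nabla_{j}]$ must be handled along the way, each producing additional curvature terms that need to be shown to cancel or to combine correctly with the curvature terms already present. No single step is conceptually difficult, but getting the arithmetic of the coefficients exactly right is the delicate part; from that point on the formula (\ref{Koiobs}) follows, and the sufficiency of $\mathcal{I}(h)\neq 0$ as an obstruction is an immediate corollary of Lemma \ref{Koilem1}.
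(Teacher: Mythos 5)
The paper does not actually prove this lemma: it is quoted as Lemma 4.3 of Koiso's paper \cite{KoiOsaka2} and used as a black box, so there is no in-paper argument to compare yours against. That said, your strategy is the correct and standard one (and is essentially Koiso's own): reduce the obstruction to $\langle\mathcal{E}''(g)(h,h),h\rangle_{L^{2}}$ via Lemma \ref{Koilem1}, observe that the normalising term contributes nothing because $C'(0)=0$ (both $\mathrm{Scal}'(h)$ and $\mathrm{Vol}'(h)$ vanish for a TT tensor on an Einstein manifold) and because $-C''(0)g$ pairs trivially with the trace-free $h$, and then evaluate $\langle\mathrm{Ric}''(g)(h,h),h\rangle_{L^{2}}$ by expanding the Christoffel symbols, integrating by parts against $\div(h)=0$ and $\mathrm{tr}_{g}(h)=0$, and using $\Delta h=-2\mathrm{Rm}(h)$ together with $\mathrm{Ric}=\lambda g$.

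The genuine gap is that you never carry out the computation that constitutes the actual content of the lemma. The statement is not the qualitative assertion that some second-variation pairing obstructs integrability (that part is an immediate corollary of Lemma \ref{Koilem1}, as you say); it is the precise identity that this pairing equals $2\lambda\langle h_{i}^{k}h_{kj},h_{ij}\rangle+3\langle\nabla_{i}\nabla_{j}h_{kl},h_{ij}h_{kl}\rangle-6\langle\nabla_{i}\nabla_{l}h_{kj},h_{ij}h_{kl}\rangle$. You acknowledge that $\mathrm{Ric}''$ contains on the order of a dozen terms of shapes $h\ast\nabla^{2}h$, $\nabla h\ast\nabla h$ and $\mathrm{Rm}\ast h\ast h$, that commutators of covariant derivatives generate further curvature terms, and that ``getting the arithmetic of the coefficients exactly right is the delicate part'' --- and then assert that the formula ``follows.'' It does not follow from anything written: in particular, you have not shown that after the integrations by parts and the substitution $\Delta h=-2\mathrm{Rm}(h)$ no term of the form $\langle R_{ikjl}h^{kl},h_{i}^{\,p}h_{pj}\rangle$ survives (on a general Einstein manifold such a term cannot be rewritten as a multiple of $\lambda\,\mathrm{tr}(H^{3})$), nor have you produced the coefficients $2\lambda$, $3$, $-6$. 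As it stands this is a correct plan for a proof, not a proof; to complete it you would need to display the expansion of $\mathrm{Ric}''(g)(h,h)$ term by term and track each coefficient through the integrations by parts, or else simply cite \cite{KoiOsaka2} as the paper does.
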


\subsection{Koiso's obstruction in complex coordinates}
Pointwise, the terms in the obstruction (\ref{Koiobs}) are of the form
\[
\langle h_{i}^{k}h_{kj}, h_{ij}\rangle = h_{i}^{k}h_{kj}h^{ij} = g^{ip}g^{qj}h_{i}^{k}h_{kj}h_{pq} = h_{i}^{k}h_{k}^{q}h_{q}^{i} = \tr(H^{3}),
\]
\[
\langle \nabla_{i}\nabla_{j}h_{kl},h_{ij}h_{kl}\rangle =  (\nabla_{i}\nabla_{j}h_{kl})h^{ij}h^{kl} = (\nabla_{i}\nabla_{j}h_{kl})h_{pq}h_{rs}g^{ip}g^{jq}g^{kr}g^{ls},
\]
\[
\langle \nabla_{i}\nabla_{l}h_{kj},h_{ij}h_{kl}\rangle =(\nabla_{i}\nabla_{l}h_{kj})h^{ij}h^{kl} =(\nabla_{i}\nabla_{l}h_{kj})h_{pq}h_{rs}g^{ip}g^{jq}g^{kr}g^{ls}.
\]
where $H$ is the symmetric endomorphism associated to $h$ given by ${H^{i}_{j} = g^{ik}h_{kj}}$. It will be convenient to compute these quantities in complex coordinates. The tangent space at a point is an inner product space $(V^{2n},g)$ with an almost complex structure $J$ such that $g(J\cdot,J\cdot)=g(\cdot,\cdot)$. We complexify $V$ and extend the tensors $g$ and $h$ $\mathbb{C}$-linearly in both arguments to obtain a $2n$-complex-dimensional space $V_{\mathbb{C}}$ and tensors $g_{\mathbb{C}}$ and $h_{\mathbb{C}}$. The space $V_{\mathbb{C}}$ splits into the $\pm \sqrt{-1}$-eigenspaces for $J$ and we write ${V_{\mathbb{C}}=V_{\mathbb{C}}^{(1,0)}\oplus V_{\mathbb{C}}^{(0,1)}}$.\\
\\
Given a $g$-orthonormal basis of $V$ of the form $v_{1},Jv_{1},v_{2},Jv_{2},\ldots v_{n},Jv_{n}$, we can form the basis $\{e_{i}\}_{i=1}^{n}$ of $V^{(1,0)}_{\mathbb{C}}$ where   
\[
e_{i}=\frac{1}{2}\left(v_{i}-\sqrt{-1}Jv_{i} \right).
\]
This is  an orthogonal basis of $(V^{(1,0)}_{\mathbb{C}},g_{\mathbb{C}})$ with $\|e_{i}\|=1/2$. The set of conjugates 
\[
\bar{e}_{i}=\frac{1}{2}\left(v_{i}+\sqrt{-1}Jv_{i} \right),
\]
form a basis of $V^{(0,1)}_{\mathbb{C}}$. As the tensors $g$ and $h$ are $J$-invariant, the only non-vanishing terms of the extensions  $g_{\mathbb{C}}$ and $h_{\mathbb{C}}$ are those of the form 
\[
g_{k\bar{l}}:=g_{\mathbb{C}}(e_{k},\bar{e}_{l}) \qquad \mathrm{and} \qquad h_{k\bar{l}} = h_{\mathbb{C}}(e_{k},\bar{e}_{l}). 
\] 
We consider Koiso's quantities but in complex coordinates e.g.
\[
\langle h_{k}^{p}h_{p\bar{l}},h_{k\bar{l}} \rangle  = g^{k\bar{q}}g^{r\bar{l}}h_{k}^{p}h_{p\bar{l}}h_{r\bar{q}} = H_{k}^{p}H_{p}^{r}H_{r}^{k} = \tr(H^{3}).
\]
As the metric is K\"ahler, the Chern connection is the same as the $\mathbb{C}$-linear extension of the Levi-Civita connection. We also note that 
\[
(\nabla_{\cdot}\nabla_{\cdot}h)(JX,JY) = (\nabla_{\cdot}\nabla_{\cdot}h)(X,Y).
\]
  
\begin{lemma}\label{Lem:ComptoRiem}
Let $h\in s^{2}(V^{\ast})$ be $J$-invariant and let $T\in (V^{\ast})^{\otimes 4}$ satisfy
\[
T(-,-,X,Y) = T(-,-,Y,X) \qquad \mathrm{and} \qquad T(-,-,JX,JY)=T(-,-,X,Y),
\]
for all $X,Y \in V$. Then, with the notation defined previously,
\begin{eqnarray}
\langle h_{kp}h^{p}_{l},h_{kl} \rangle = 2\langle h_{k\bar{p}}h^{\bar{p}}_{\bar{l}},h_{k\bar{l}} \rangle, \label{RiemCom_1}\\
\langle T_{klrs},h_{kl}h_{rs} \rangle = 4 \mathrm{Re}(\langle T_{k\bar{l}r\bar{s}},h_{k\bar{l}}h_{r\bar{s}} \rangle),
\label{RiemCom_2}\\
\langle T_{krsl}, h_{kl}h_{rs} \rangle = 2 \mathrm{Re}(\langle T_{k\bar{r}s\bar{l}} ,h_{k\bar{l}}h_{s\bar{r}}\rangle). \label{RiemCom_3}
\end{eqnarray}

\end{lemma}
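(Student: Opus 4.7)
The plan is to transform each real contraction into the complex basis $\{e_i, \bar{e}_i\}$ via the substitutions $v_i = e_i + \bar{e}_i$ and $Jv_i = \sqrt{-1}(e_i - \bar{e}_i)$, and to collect the surviving terms by tracking which barred/unbarred index patterns give nonzero contributions.

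Four preliminary facts will control the bookkeeping. First, $J$-invariance of $g$ and $h$ forces all pure-type components to vanish, so only $g_{k\bar{l}}$ and $h_{k\bar{l}}$ are nonzero; the normalisation $e_i = \tfrac{1}{2}(v_i - \sqrt{-1}Jv_i)$ gives $g_{k\bar{l}} = \tfrac{1}{2}\delta_{kl}$ and hence $g^{k\bar{l}} = 2\delta^{kl}$, so each raised complex index contributes a factor of $2$. Second, $J$-invariance of $T$ in its last two slots kills the components $T_{\cdot\,\cdot\,kl}$ and $T_{\cdot\,\cdot\,\bar{k}\bar{l}}$. Third, because $T$ is a real tensor, its $\CC$-linear extension satisfies $T_{\bar{a}\bar{b}\bar{c}\bar{d}} = \overline{T_{abcd}}$, while reality and symmetry of $h$ give the Hermitian relation $h_{j\bar{i}} = \overline{h_{i\bar{j}}}$. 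Fourth, although $T$ is not assumed $J$-invariant in its first two slots, contraction with the $J$-invariant $h$ in those slots effectively symmetrises $T$ onto its $J$-invariant part there, so all four slots may be taken to have only mixed-type nonzero components.

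For identity \eqref{RiemCom_1}, the cleanest route is to use that $\tr(H^3)$ is basis-invariant and that $H$ commutes with $J$, so it preserves the splitting $V_{\CC} = V^{(1,0)}_{\CC} \oplus V^{(0,1)}_{\CC}$ and acts on the two summands by complex-conjugate matrices; hence $\tr(H^3) = 2\,\tr_{V^{(1,0)}_{\CC}}(H^3)$. A direct computation shows the restriction has matrix entries $H_j^k = 2h_{j\bar{k}}$ (the factor of $2$ from $g^{k\bar{l}} = 2\delta^{kl}$), and comparing with the expansion of the right-hand side yields \eqref{RiemCom_1} immediately.

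For identities \eqref{RiemCom_2} and \eqref{RiemCom_3}, I would enumerate the nonzero barred/unbarred configurations of the four indices of $T$, each configuration contributing an overall factor of $16 = 2^4$ from the four inverse-metric factors $g^{k\bar{l}} = 2\delta^{kl}$. In \eqref{RiemCom_2} the slots of $T$ pair as $(1,2)$ and $(3,4)$ with the two copies of $h$, giving four mixed-type configurations; the symmetry $T_{abcd} = T_{abdc}$ identifies these in two equal pairs, and the reality of $T$ together with Hermiticity of $h$ shows the two remaining expressions are complex conjugates of each other, producing the coefficient $4$. In \eqref{RiemCom_3} the slots pair as $(1,4)$ and $(2,3)$; combined with the constraint that slots $3,4$ must already be mixed (from $J$-invariance of $T$), this leaves only two nonzero configurations, which are again complex conjugates, producing the coefficient $2$. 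The main obstacle is purely combinatorial: carefully matching the surviving configurations to the index patterns on the right-hand side, and verifying that the relabellings used to collapse pairs of terms are consistent with the stated symmetries of $T$.
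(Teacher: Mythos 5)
Your proposal is correct, but it is organised rather differently from the paper's proof, so it is worth comparing the two. The paper first diagonalises $h$ over the real orthonormal basis $v_{1},Jv_{1},\ldots$, writes both sides of each identity as explicit double sums weighted by $\lambda_{k}\lambda_{r}$, and then expands $T(e_{k},\bar{e}_{k},e_{r},\bar{e}_{r})$ (resp.\ $T(e_{k},\bar{e}_{r},e_{r},\bar{e}_{k})$) into the sixteen real-basis terms, identifying by inspection which of them survive and contribute to the real part. You instead keep $h$ general, use that each real contraction equals the corresponding full contraction over any basis of $V_{\mathbb{C}}$, and then count the surviving barred/unbarred type configurations, collapsing them in equal pairs via the slot symmetry $T(-,-,X,Y)=T(-,-,Y,X)$ and in conjugate pairs via the reality of $T$ and the Hermitian relation $h_{j\bar{\imath}}=\overline{h_{i\bar{\jmath}}}$; the coefficients $4$ and $2$ then appear as configuration counts rather than by inspection. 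Your treatment of \eqref{RiemCom_1} --- that $H$ commutes with $J$, preserves $V^{(1,0)}_{\mathbb{C}}\oplus V^{(0,1)}_{\mathbb{C}}$, and acts on the two summands by conjugate Hermitian matrices, whence $\mathrm{tr}_{V}(H^{3})=2\,\mathrm{tr}_{V^{(1,0)}_{\mathbb{C}}}(H^{3})$ --- is cleaner than the paper's eigenvalue computation and yields the same factor of $2$. What your route buys is that no diagonalisation is needed and the real part emerges for structural reasons (conjugate configurations); what the paper's buys is an explicit check of every normalisation constant (the $8$, the $16$, and $h_{k\bar{k}}=\lambda_{k}$), which matters because the later obstruction computation is sensitive to these factors. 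One small caution for the write-up: the factor $16=2^{4}$ you attribute to the four raised indices is already built into the definition of $\langle T_{k\bar{l}r\bar{s}},h_{k\bar{l}}h_{r\bar{s}}\rangle$ on the right-hand side, so it must not be counted again; the final coefficients should be exactly the configuration counts $4$ and $2$.
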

\begin{proof}
We might as well assume that the basis $v_{1},Jv_{1},v_{2},Jv_{2},\ldots, v_{n},Jv_{n}$ diagonalises the symmetric tensor $h_{kl}$. If
\[
h(v_{k},v_{k}) = h(Jv_{k},Jv_{k}) =  2\lambda_{k},
\]
then
\[
h_{\mathbb{C}}(e_{k},\bar{e}_{k}) = h_{k\bar{k}} = \lambda_{k}.
\]
Equation (\ref{RiemCom_1}) follows from
\[
\langle h_{kr}h^{r}_{l},h_{kl} \rangle = 16\sum_{k=1}^{n}\lambda_{k}^{3} \qquad \mathrm{and} \qquad \langle h_{k\bar{r}}h^{\bar{r}}_{\bar{l}},h_{k\bar{l}} \rangle = 8\sum_{k=1}^{n}\lambda_{k}^{3}.
\]
For Equation (\ref{RiemCom_2}), we note
\[
\langle T_{klrs},h_{kl}h_{rs} \rangle  = 8\sum_{1\leq k,r\leq n}\left(T(v_{k},v_{k},v_{r},v_{r})+T(Jv_{k},Jv_{k},v_{r},v_{r})\right)\lambda_{k}\lambda_{r},
\]
and
\[
\langle T_{k\bar{l}r\bar{s}},h_{k\bar{l}}h_{r\bar{s}} \rangle  = 16\sum_{1\leq k,r\leq n}T(e_{k},\bar{e}_{k},e_{r},\bar{e}_{r})\lambda_{k}\lambda_{r}.
\]
Expanding 
\[
T(e_{k},\bar{e}_{k},e_{r},\bar{e}_{r}) = \frac{1}{16}T(v_{k}-\sqrt{-1}Jv_{k},v_{k}+\sqrt{-1}Jv_{k},v_{r}-\sqrt{-1}Jv_{r},v_{r}+\sqrt{-1}Jv_{r}),
\]
we see the real part is contributed to by taking $0, 2,$ or $4$ terms with $\sqrt{-1}$.  The terms of the form $T(-,-,v_{r},Jv_{r})=T(-,-,Jv_{r},v_{r})=0$ and so we obtain
\[
\langle T_{k\bar{l}r\bar{s}},h_{k\bar{l}}h_{r\bar{s}} \rangle  = 2\sum_{1\leq k,r\leq n}\left(T(v_{k},v_{k},v_{r},v_{r})+T(Jv_{k},Jv_{k},v_{r},v_{r})\right)\lambda_{k}\lambda_{r}.
\] 
For Equation (\ref{RiemCom_3}) \vspace{5pt} \\ 
$\langle T_{krsl},h_{kl}h_{rs} \rangle = $
\begin{small}
\begin{equation*}
 4\sum_{1\leq k,r\leq n}\left(T(v_{k},v_{r},v_{r},v_{k})+T(Jv_{k},v_{r},v_{r},Jv_{k})+T(v_{k},Jv_{r},Jv_{r},v_{k})+T(Jv_{k},Jv_{r},Jv_{r},Jv_{k}) \right)\lambda_{k}\lambda_{r},
\end{equation*}
\end{small}
and 
\[
\langle T_{k\bar{r}s\bar{l}} ,h_{k\bar{l}}h_{s\bar{r}}\rangle = 16\sum_{1\leq k,r\leq n}T(e_{k},\bar{e}_{r},e_{r},\bar{e}_{k})\lambda_{k}\lambda_{r}.
\]
Choosing $0,2,$ or $4$ terms with $\sqrt{-1}$ from
\[
T(e_{k},\bar{e}_{r},e_{r},\bar{e}_{k}) = \frac{1}{16}T(v_{k}-\sqrt{-1}Jv_{k},v_{r}+\sqrt{-1}Jv_{r},v_{r}-\sqrt{-1}Jv_{r},v_{k}+\sqrt{-1}Jv_{k}),
\]
we pick up terms
\[
T(v_{k},v_{r},v_{r},v_{k}),\quad T(v_{k},v_{r},Jv_{r},Jv_{k}), \quad T(Jv_{k},Jv_{r},v_{r},v_{k}), \quad T(Jv_{k},Jv_{r},Jv_{r},Jv_{k}),
\]
\[
T(Jv_{k},v_{r},v_{r},Jv_{k}),\quad -T(Jv_{k},v_{r},Jv_{r},v_{k}),\quad T(v_{k},Jv_{r},Jv_{r},v_{k}),\quad -T(v_{k},Jv_{r},v_{r},Jv_{k}).
\]
Hence \vspace{5pt} \\
$\langle T_{k\bar{r}s\bar{l}} ,h_{k\bar{l}}h_{s\bar{r}}\rangle =$
\begin{small}
\[
 2\sum_{1\leq k,r\leq n}\left(T(v_{k},v_{r},v_{r},v_{k})+T(Jv_{k},v_{r},v_{r},Jv_{k})+T(v_{k},Jv_{r},Jv_{r},v_{k})+T(Jv_{k},Jv_{r},Jv_{r},Jv_{k}) \right)\lambda_{k}\lambda_{r},
\]
\end{small}
and the result follows.
\end{proof}
\subsection{Strategy for demonstrating the rigidity of Grassmannians}
We follow the strategy outlined in \cite{BHMW} where the rigidity of $\mathcal{M}=SU_{2n+1}$ was demonstrated by proving none of the EIDs that the biinvariant metric admits is integrable to second order. (We should say here that this strategy is implicit in Koiso's original work on rigidity). For a general symmetric space $\mathcal{M} = G/K$, the projection of $\mathcal{E}''(h,h)$ to $\varepsilon(g)$ can been seen as an element of $\mathrm{Hom}_{G}(s^{2}(\mathfrak{g}),\mathfrak{g})$ and when $G$ is the special unitary group this Hom-space is one-dimensional. Thus the obstruction map is a multiple of a particular generator (see the discussion in Section 3 of \cite{BHMW}).  In the case of $G=SU_{2n+1}$, this generator does not have any zeros, so, providing the multiple is non-zero, one can conclude all EIDs are obstructed to second order. To show the multiple is non-zero,  we choose carefully a single element $\gamma \in \mathfrak{su}_{n+m}$, form the corresponding EID,  $\mathcal{D}_{\gamma}$, and compute $\mathcal{I}(D_{\gamma})$. We show 
$\mathcal{I}(D_{\gamma}) \neq 0$ which proves Theorem \ref{Thm:Main}

\section{An explicit description of the variations of the Grassmannian}\label{Sec:3}
\subsection{Tautological bundles for the Grassmannian}
We denote by $G_{m}(\mathbb{C}^{n+m})$ the Grassmannian of $m$-planes in $\mathbb{C}^{n+m}$.  The generalised Euler sequence 
\[
0\rightarrow \mathcal{U}\rightarrow \underline{\mathbb{C}}^{n+m}\rightarrow \mathcal{Q}\rightarrow 0,
\]
relates the the trivial  $\mathbb{C}^{n+m}$-bundle over the Grassmannian to its subbundle $\mathcal{U}$, the tautological $m$-plane bundle, and the quotient bundle $\mathcal{Q}$. The holomorphic tangent bundle $\mathcal{T}G_{m}(\mathbb{C}^{n+m})$ is isomorphic to ${\mathrm{Hom}_{\mathbb{C}}(\mathcal{U}, \mathcal{Q})}$. Fixing an Hermitian inner product on the ambient copy of $\mathbb{C}^{n+m}$ induces a fixed Hermitian metric on $\underline{\mathbb{C}}^{n+m}$ and gives a splitting of the bundle $\underline{\mathbb{C}}^{n+m} = \mathcal{U}\oplus\mathcal{U}^{\perp}$ where $\mathcal{Q}\cong \mathcal{U}^{\perp}$ as complex (but not holomorphic) vector bundles. The metric on $\underline{\mathbb{C}}^{n+m} $ restricts to a metric on the subbundles $ \mathcal{U}$ and $\mathcal{U}^{\perp}$ and therefore induces a Hermitian metric on $\mathcal{U}^{\ast}\otimes \mathcal{U}^{\perp}\cong \mathcal{T}G_{m}({\mathbb{C}}^{n+m} )$; this metric, as we shall see, is a K\"ahler--Einstein metric.\\
\\
If, instead of a Hermitian metric on the trivial bundle $\underline{\mathbb{C}}^{n+m}$, we endow it with a general sesquilinear form, restriction also yields sesquilinear forms on the subbbundles $ \mathcal{U}$ and $\mathcal{U}^{\perp}$. Using the Hermitian metric on $\mathcal{U}$, we can induce a sesquilinear form on $\mathcal{U}^{\ast}$. \\
\\
Finally, we note that a sesquilinear forms $P_{1}$ and $P_{2}$ on $\mathcal{U}^{\ast}$ and $\mathcal{U}^{\perp}$ induce a sesquilinear form $P_{1}\cdot P_{2}$ on $\mathcal{T}G_{m}({\mathbb{C}}^{n+m} )$. 

\subsection{Trivialising the tautological vector bundles}
An $m$-plane in $\mathbb{C}^{n+m}$ is the image of a injective linear map with domain $\mathbb{C}^{m}$ i.e. the image of an $(n+m)\times m$ rank $m$ matrix. We will consider the dense open set of planes that can be realised, after multiplication by $GL(m,\mathbb{C})$, by the image of the matrix
\[
M=\left( 
\begin{array}{c}
\mathbb{I}_{m}\\
W
\end{array}
\right),
\]
where $W\in \mathrm{Mat}^{n\times m}(\mathbb{C})$ and $\mathbb{I}_{m}$ is the $m\times m$ identity matrix. The entries in $W$ are complex coordinates for the open set of planes. Accordingly, we will consider a generic coordinate $w_{I}$ where $I =(i_{1},i_{2})$, ${i_{1}\in \{1,2,\ldots, n\}}$, and ${i_{2}\in\{1,2,\ldots ,m\} }$.\\
\\
The $m$ columns of the matrix $M$ can be thought of as sections that generate at each point the tautological rank $m$ bundle $\mathcal{U}\rightarrow G_{m}(\mathbb{C}^{n+m})$.  If we endow the ambient $\mathbb{C}^{n+m}$ with the standard Euclidean Hermitian metric $\langle\cdot,\cdot \rangle_{\mathrm{Euc}}$, then the induced metric on the trivial bundle yields a splitting 
\[
\mathcal{U} = \mathrm{span}\{ c_{1},c_{2},\ldots ,c_{m}\} \quad \mathrm{and} \quad \mathcal{U}^{\perp} = \mathrm{span}\{  q_{1},q_{2},\ldots, q_{n}\},
\]
where 
\[
c_{j} = \left(
\begin{array}{c} 
0\\
\vdots \\
0\\
1\\
0\\
\vdots\\
w_{1j}\\
w_{2j}\\
\vdots\\
w_{nj}
\end{array}
\right) \quad \mathrm{and} \quad q_{j} = \left(
\begin{array}{c} 
-\bar{w}_{j1}\\
-\bar{w}_{j2}\\
\vdots \\
-\bar{w}_{jm}\\
0\\
\vdots \\
0\\
1\\
0\\
\vdots \\
0
\end{array}
\right)
\] 
where the `$1$' is in the $j^{th}$ row for $c_{j}$ and in the in the $(m+j)^{th}$ row for $q_{j}$.
\subsection{Induced metrics on the canonical bundles and the Fubini--Study metric}
With respect to the local frames $\{c_{j}\}$ and $\{q_{j}\}$, the induced Hermitian metrics are given by by
\[
U_{ij} := \langle c_{i}, c_{j} \rangle_{\mathrm{Euc}} \quad \mathrm{and} \quad Q_{ij} := \langle q_{i},q_{j}\rangle_{\mathrm{Euc}}.
\] 
We denote by $U^{-1}$ and $Q^{-1}$ the inverse matrices of $U$ and $Q$ so, for example, ${U^{-1}_{ik}U_{kj} = \delta_{ij}}$.  We record here some useful identities regarding these Hermitian metrics, leaving the proofs to the reader.
\begin{lemma}[Zeroth-order identities]
	Let $U$ and $Q$ be defined as previously. Then the following identities hold:
	\begin{eqnarray}
	U_{ij} = \delta_{ij} + w_{ri}\overline{w}_{rj},\label{U_eqn}\\
	Q_{ij} = \delta_{ij} + \overline{w}_{ir}w_{jr}, \label{Q_eqn}\\
	Q^{-1}_{ij} = \delta_{ij}-(U^{-1}_{rs})\overline{w}_{ir}w_{js},\\
	(Q^{-1}_{ir})\overline{w}_{rj} = (U^{-1}_{rj})\overline{w}_{ir} \label{shift_eqn}.
	\end{eqnarray}
\end{lemma}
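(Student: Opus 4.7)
The plan is to verify all four identities by direct index computation, reading off the explicit entries of the columns $c_j$ and $q_j$ and repeatedly using (\ref{U_eqn}) to collapse quadratic monomials in $w$ into expressions involving $U$.

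The identities (\ref{U_eqn}) and (\ref{Q_eqn}) are immediate from the definitions. The vector $c_i$ has a $1$ in position $i$ and entry $w_{ri}$ in position $m+r$ for $r=1,\ldots,n$, so the Euclidean Hermitian pairing (conjugate-linear in the second slot, as forced by the stated form of $U_{ij}$) gives
\[
\langle c_i,c_j\rangle_{\mathrm{Euc}}=\delta_{ij}+\sum_{r=1}^{n} w_{ri}\overline{w}_{rj}.
\]
Symmetrically, the nonzero entries of $q_i$ are $-\overline{w}_{is}$ in position $s$ for $s=1,\ldots,m$ and a $1$ in position $m+i$, yielding $\langle q_i,q_j\rangle_{\mathrm{Euc}}=\delta_{ij}+\sum_{s}\overline{w}_{is}w_{js}$.

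For the formula for $Q^{-1}$ the plan is to verify that the stated right-hand side really inverts $Q$ by expanding
\[
\bigl(\delta_{ik}-U^{-1}_{rs}\overline{w}_{ir}w_{ks}\bigr)\bigl(\delta_{kj}+\overline{w}_{kp}w_{jp}\bigr)
\]
into four terms. The cross term contains the quadratic factor $w_{ks}\overline{w}_{kp}$, which by (\ref{U_eqn}) equals $U_{sp}-\delta_{sp}$; substituting and using $U^{-1}_{rs}U_{sp}=\delta_{rp}$ produces a $-\overline{w}_{ip}w_{jp}$ piece that cancels the expansion term coming from $\overline{w}_{kp}w_{jp}$, and a $U^{-1}_{rp}\overline{w}_{ir}w_{jp}$ piece that cancels against $-U^{-1}_{rs}\overline{w}_{ir}w_{js}$ after relabelling dummy indices, leaving only $\delta_{ij}$. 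The shift identity (\ref{shift_eqn}) follows by the same manoeuvre: after substituting the expression for $Q^{-1}_{ir}$ into the left-hand side, the remaining monomial $w_{rt}\overline{w}_{rj}$ is replaced by $U_{tj}-\delta_{tj}$ via (\ref{U_eqn}), and then $U^{-1}_{st}U_{tj}=\delta_{sj}$ causes an $\overline{w}_{ij}$ pair to cancel, leaving exactly $U^{-1}_{sj}\overline{w}_{is}$.

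There is no real obstacle here: the whole lemma is bookkeeping, and the single recurring device is to re-express the quadratic monomial $w\overline{w}$ via (\ref{U_eqn}) as $U-\mathrm{Id}$, so that polynomials in the affine coordinates become algebraic expressions in $U$ and $U^{-1}$ that collapse via the defining identity of the matrix inverse.
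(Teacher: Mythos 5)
Your verification is correct: the paper deliberately omits the proof (``leaving the proofs to the reader''), and your direct index computation --- reading off the entries of $c_j$ and $q_j$ for the first two identities, then collapsing each quadratic monomial $w_{ks}\overline{w}_{kp}$ into $U_{sp}-\delta_{sp}$ so that the inverse relation $U^{-1}U=\mathbb{I}$ does the cancelling --- is exactly the intended elementary argument. The only cosmetic remark is that the last two identities are both instances of the single matrix relation $Q\overline{W}=\overline{W}U$, which you effectively prove en route; nothing is missing.
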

We will also need to compute derivatives of the metric quantities in order to work with objects such as connections. 
\begin{lemma}[Derivatives of Hermitian metrics] \label{U_Q_norm}
	Let $U$ and $Q$ be defined as previously. Then the following identities hold:
	\begin{eqnarray}
	\frac{\partial U_{jk}^{-1}}{\partial w_{I}} = -(U^{-1}_{ji_{2}})(U^{-1}_{rk})\overline{w}_{i_{1}r} = -(U^{-1}_{ji_{2}})(Q^{-1}_{i_{1}r})\overline{w}_{rk}, \label{dU} \\
	\frac{\partial Q_{jk}^{-1}}{\partial w_{I}} = -(U^{-1}_{ri_{2}})(Q^{-1}_{i_{1}k})\overline{w}_{jr}, \label{dQ} \\
	\frac{\partial^{2} U_{kl}^{-1}}{\partial w_{I}\partial\overline{w}_{J}} = (U^{-1}_{j_{2}i_{2}})(U^{-1}_{sl})(U^{-1}_{kr})\overline{w}_{i_{1}s}w_{j_{1}r}-(U^{-1}_{k{i_{2}}})(U^{-1}_{j_{2}l})(Q^{-1}_{i_{1}j_{1}}),\\
	\frac{\partial^{2} U_{kl}^{-1}}{\partial w_{I}\partial\overline{w}_{J}} = (U^{-1}_{j_{2}i_{2}})(Q^{-1}_{i_{1}s})(U^{-1}_{kr})\overline{w}_{sl}w_{j_{1}r}-(U^{-1}_{k{i_{2}}})(U^{-1}_{j_{2}l})(Q^{-1}_{i_{1}j_{1}}),\\
	\frac{\partial^{2} Q_{kl}^{-1}}{\partial w_{I}\partial\overline{w}_{J}} = (U^{-1}_{j_{2}r})(U^{-1}_{si_{2}})(Q^{-1}_{i_{1}j_{1}})\overline{w}_{ks}w_{lr}-(U^{-1}_{j_{2}i_{2}})(Q^{-1}_{kj_{1}})(Q^{-1}_{i_{1}l}).
	\end{eqnarray}
\end{lemma}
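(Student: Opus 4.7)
The plan is to combine the matrix-derivative identity $\partial U^{-1}=-U^{-1}(\partial U)U^{-1}$, obtained by differentiating $U\cdot U^{-1}=\mathbb{I}_m$, with the zeroth-order formulas (\ref{U_eqn})--(\ref{shift_eqn}) from the previous lemma. Differentiating (\ref{U_eqn}) and (\ref{Q_eqn}) in $w_I = w_{i_1 i_2}$ yields $\partial U_{ab}/\partial w_I = \delta_{ai_2}\overline{w}_{i_1 b}$ and $\partial Q_{ab}/\partial w_I = \delta_{bi_1}\overline{w}_{ai_2}$; contracting on each side with $U^{-1}$, respectively $Q^{-1}$, immediately gives the first equality of (\ref{dU}) and the identity (\ref{dQ}). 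The alternative form of (\ref{dU}) is then a single application of the shift identity (\ref{shift_eqn}) to move the contraction between $U^{-1}$ and $Q^{-1}$.

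For the mixed second derivatives I would apply $\partial/\partial\overline{w}_J$ to the first-order expressions just derived. The $\overline{w}$-analogue of the previous computation gives $\partial U^{-1}_{ab}/\partial\overline{w}_J = -(U^{-1}_{ac})(U^{-1}_{j_2 b})w_{j_1 c}$, together with $\partial\overline{w}_{i_1 r}/\partial\overline{w}_J = \delta_{i_1 j_1}\delta_{rj_2}$. Applying the product rule to $\partial U^{-1}_{kl}/\partial w_I = -(U^{-1}_{ki_2})\overline{w}_{i_1 r}(U^{-1}_{rl})$ produces three terms, one from each factor; after relabeling, differentiating the first $U^{-1}$ yields exactly $(U^{-1}_{j_2 i_2})(U^{-1}_{sl})(U^{-1}_{kr})\overline{w}_{i_1 s}w_{j_1 r}$, which is the first summand of the stated formula.

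The key observation is that the remaining two terms collapse to the second summand via the zeroth-order identity $Q^{-1}_{ij} = \delta_{ij}-(U^{-1}_{rs})\overline{w}_{ir}w_{js}$:
\[
-(U^{-1}_{ki_2})(U^{-1}_{j_2 l})\bigl(\delta_{i_1 j_1}-(U^{-1}_{rc})\overline{w}_{i_1 r}w_{j_1 c}\bigr) = -(U^{-1}_{ki_2})(U^{-1}_{j_2 l})(Q^{-1}_{i_1 j_1}).
\]
The equivalence of the two displayed expressions for $\partial^2 U^{-1}_{kl}/\partial w_I\partial\overline{w}_J$ is then another single application of (\ref{shift_eqn}), swapping $U^{-1}_{sl}\overline{w}_{i_1 s}$ for $Q^{-1}_{i_1 s}\overline{w}_{sl}$ in the first summand. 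The calculation for $\partial^2 Q^{-1}_{kl}/\partial w_I\partial\overline{w}_J$ proceeds identically, starting from (\ref{dQ}) in place of (\ref{dU}); here the two $Q^{-1}$-factors in the second summand appear naturally since $\partial Q_{cd}/\partial\overline{w}_J = \delta_{cj_1}w_{dj_2}$ feeds $Q^{-1}$'s on both sides. The whole proof is routine matrix calculus, and the only non-bookkeeping step is the three-to-two collapse via the zeroth-order $Q^{-1}$-expansion; I do not expect any substantive obstacle beyond keeping the eight-letter multi-indices straight.
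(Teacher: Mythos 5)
Your proposal is correct and is exactly the routine matrix-calculus verification the paper intends: the paper states this lemma with the proofs explicitly left to the reader, and the differentiation of $UU^{-1}=\mathbb{I}_m$, the product rule, the collapse of the leftover terms via $Q^{-1}_{ij}=\delta_{ij}-(U^{-1}_{rs})\overline{w}_{ir}w_{js}$, and the use of the shift identity (\ref{shift_eqn}) (and its conjugate, needed in the $Q^{-1}$ case) all check out. The only nitpick is that (\ref{dQ}) as stated also requires one application of (\ref{shift_eqn}) after the direct contraction, which your write-up glosses over but clearly has in hand.
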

An immediate consequence of Equations (\ref{dU}) and (\ref{dQ}) is that the complex coordinates $W$ are `normal' coordinates for the Hermitian metrics (in the sense that at $W=0$, both matrices are the identity, and the first derivatives of the metric vanish at $W=0$ too). We will use the following standard fact. 
\begin{lemma}[Generalised Euler Sequence] \label{Lem:Eul_seq}
	The holomorphic tangent bundle of the Grassmannian $G_{m}(\mathbb{C}^{n+m})$ satisfies
	\[
	TG_{m}(\mathbb{C}^{n+m}) \cong \mathrm{Hom}_{\mathbb{C}}(\mathcal{U},\mathcal{Q}) \cong \mathcal{U}^{\ast}\otimes \mathcal{Q}.
	\]
	Furthermore, identifying $\mathcal{Q}\cong \mathcal{U}^{\perp}$ as complex vector bundles, in the local coordinates $W$ we have
	\[
	\frac{\partial}{\partial w_{I}} = c^{\ast}_{i_{2}}\otimes (Q^{-1}_{i_{1}k})q_{k},
	\]
	where $\{c^{\ast}_{j}\}$ denotes the frame of $\mathcal{U}^{\ast}$ dual to $\{c_{j}\}.$
\end{lemma}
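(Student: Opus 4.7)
The plan is to establish the abstract bundle isomorphism first and then unpack it in the chosen coordinates. For the abstract part I would recall the standard argument: a tangent vector at a point $[V]\in G_{m}(\mathbb{C}^{n+m})$ is the velocity of a one-parameter family $M(t):\mathbb{C}^{m}\to\mathbb{C}^{n+m}$ of injective linear maps whose image at $t=0$ is $V$, and two such families agree to first order modulo linear reparametrisations of $\mathbb{C}^{m}$ precisely when their derivatives $\dot M(0)$ induce the same map $V\to\mathbb{C}^{n+m}/V$. This yields $T_{[V]}G_{m}(\mathbb{C}^{n+m})\cong\mathrm{Hom}_{\mathbb{C}}(V,\mathbb{C}^{n+m}/V)$, which globalises to $TG_{m}(\mathbb{C}^{n+m})\cong\mathrm{Hom}_{\mathbb{C}}(\mathcal{U},\mathcal{Q})=\mathcal{U}^{\ast}\otimes \mathcal{Q}$.

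To verify the coordinate formula I would differentiate the generating matrix $M(W)$ column by column. The $j$th column of $M(W)$ is exactly the frame vector $c_{j}$, and since $W$ appears only in the lower $n\times m$ block of $M(W)$, one has $\partial c_{j}/\partial w_{I}=\delta_{i_{2}j}\,e_{m+i_{1}}$, where $e_{m+i_{1}}$ denotes the standard basis vector of $\mathbb{C}^{n+m}$. Under the abstract identification, $\partial/\partial w_{I}$ is therefore the homomorphism $\mathcal{U}\to\mathcal{Q}$ sending $c_{i_{2}}$ to the class of $e_{m+i_{1}}$ and every other $c_{j}$ to zero; using the Hermitian metric to identify $\mathcal{Q}\cong\mathcal{U}^{\perp}$ replaces this class by the $\mathcal{U}^{\perp}$-component of $e_{m+i_{1}}$. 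The rule $c_{j}\mapsto\delta_{i_{2}j}$ on the domain side is, by definition, the dual covector $c^{\ast}_{i_{2}}$.

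The remaining step is a one-line orthogonal projection. Since $q_{l}$ has a $1$ in row $m+l$ and no other entries below row $m$, the Euclidean Hermitian pairing gives $\langle e_{m+i_{1}},q_{l}\rangle_{\mathrm{Euc}}=\delta_{i_{1}l}$, and the standard dual-frame formula for projection onto the subspace $\mathcal{U}^{\perp}$ with Hermitian Gram matrix $Q$ yields
\[
\pi_{\mathcal{U}^{\perp}}(e_{m+i_{1}})=Q^{-1}_{i_{1}k}\,q_{k},
\]
which combined with the domain factor $c^{\ast}_{i_{2}}$ produces $\partial/\partial w_{I}=c^{\ast}_{i_{2}}\otimes Q^{-1}_{i_{1}k}q_{k}$, as claimed. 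The only genuinely delicate point is convention matching, namely confirming that the recipe ``differentiate columns of $M$ and project modulo $\mathcal{U}$'' really realises the abstract isomorphism $TG_{m}\cong\mathrm{Hom}(\mathcal{U},\mathcal{Q})$ in this affine chart; once that is in place the rest is elementary linear algebra in the frames $\{c_{j}\}$ and $\{q_{k}\}$.
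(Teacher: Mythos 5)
Your proof is correct and is exactly the standard argument the paper has in mind when it labels this a ``standard fact'' and omits the proof: identify a tangent vector with the class of $\dot M(0)$ in $\mathrm{Hom}(V,\mathbb{C}^{n+m}/V)$, differentiate the columns $c_{j}$ to get $\partial c_{j}/\partial w_{I}=\delta_{i_{2}j}\,e_{m+i_{1}}$, and orthogonally project onto $\mathcal{U}^{\perp}$ using the Gram matrix $Q$, where $\langle e_{m+i_{1}},q_{l}\rangle_{\mathrm{Euc}}=\delta_{i_{1}l}$ gives $\pi_{\mathcal{U}^{\perp}}(e_{m+i_{1}})=Q^{-1}_{i_{1}k}q_{k}$ with the index placement matching the paper's conventions. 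As a quick consistency check, pairing the resulting expression for $\partial/\partial w_{I}$ with itself under $g_{1}\cdot g_{2}$ reproduces the stated Fubini--Study formula $g_{I\bar J}=(U^{-1}_{j_{2}i_{2}})(Q^{-1}_{i_{1}j_{1}})$, confirming the conventions are matched correctly.
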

Given a Hermitian vector bundle $(E,h_{E})$, the Hermitian metric, $h_{E}$ induces a Hermitian metric $h_{E^{\ast}}$ on the dual bundle $E^{\ast}$. The salient point of the construction is that if, in a local frame $\{e_{i}\}$, the metric $h_{E}$ is described by the Hermitian matrix $(h_{E})_{ij}$, then the induced metric described in the dual frame $\{e_{i}^{\ast}\}$ satisfies
\[
h_{E^{\ast}}(e_{i}^{\ast}, e^{\ast}_{j}) = (h_{E})^{-1}_{ji}.
\] 
The induced metrics on $\mathcal{U}^{\ast}$ and $\mathcal{U}^{\perp}\cong\mathcal{Q}$ (which we denote $g_{1}$ and $g_{2}$ respectively) yield a metric $g=g_{1}\cdot g_{2}$ on the tensor product $\mathcal{U}^{\ast}\otimes \mathcal{Q}$ which, by Lemma \ref{Lem:Eul_seq}, gives rise to a Hermitian metric on the holomorphic tangent bundle; this metric is the {\it Fubini--Study} metric and we now detail some of its interesting properties. Again, we leave the proofs to the reader. 

\begin{lemma}[Properties of Fubini--Study metric]
	In the local holomorphic cordinates $W$, the metric induced on the holomorphic tangent bundle as given by
	\[
	g_{I\bar{J}} = (U^{-1}_{j_{2}i_{2}})(Q^{-1}_{i_{1}j_{1}}).
	\]
	The inverse metric $g^{I\bar{J}}$ (in the sense that $g^{I\bar{K}}g_{J\bar{K}} = \delta_{IJ}$ ) is given by
	\[
	g^{I\bar{J}} =(U_{i_{2}j_{2}})(Q_{j_{1}i_{1}}).
	\]
	The metric is K\"ahler as 
	\[
	\frac{\partial g_{I\bar{J}}}{\partial w_{K}} = \frac{\partial g_{K\bar{J}}}{\partial w_{I}}.
	\]
	The Christoffel symbols are given by
	\[
	\Gamma_{IJ}^{K} = -(\delta_{j_{1}k_{1}})(\delta_{i_{2}k_{2}})(U^{-1}_{sj_{2}})\overline{w}_{i_{1}s} - (\delta_{i_{1}k_{1}})(\delta_{j_{2}k_{2}})(U^{-1}_{si_{2}})\overline{w}_{j_{1}s}. 
	\]
	The metric is K\"ahler--Einstein and satisfies
	\[
	\mathrm{Ric}_{I\bar{J}} = (n+m)g_{I\bar{J}}.
	\]
\end{lemma}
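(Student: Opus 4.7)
The plan is to establish the five claims in order, each building on its predecessors. First, to compute $g_{I\bar{J}}$, I would evaluate the product metric $g = g_{1}\cdot g_{2}$ on the tangent vectors supplied by Lemma \ref{Lem:Eul_seq}: writing $\partial/\partial w_{I} = c^{\ast}_{i_{2}}\otimes Q^{-1}_{i_{1}k}q_{k}$, the factor on $\mathcal{U}^{\ast}$ contributes $U^{-1}_{j_{2}i_{2}}$ by the dual-metric convention stated just before the lemma, while the factor on $\mathcal{U}^{\perp}\cong\mathcal{Q}$ evaluates to $Q^{-1}_{i_{1}k}\,\overline{Q^{-1}_{j_{1}l}}\,Q_{kl}$. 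Since $Q$ is Hermitian, $\overline{Q^{-1}_{j_{1}l}} = Q^{-1}_{lj_{1}}$, and the three $Q$'s collapse to $Q^{-1}_{i_{1}j_{1}}$, producing the stated formula. The inverse-metric expression is then a one-line check that $g^{I\bar{K}}g_{J\bar{K}} = \delta_{i_{1}j_{1}}\delta_{i_{2}j_{2}} = \delta_{IJ}$.

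For the K\"ahler condition I would differentiate $g_{I\bar{J}} = U^{-1}_{j_{2}i_{2}}Q^{-1}_{i_{1}j_{1}}$ in $w_{K}$ using (\ref{dU}) and (\ref{dQ}); the resulting sum of two terms becomes manifestly symmetric in the index pairs $(i_{1},i_{2})$ and $(k_{1},k_{2})$ after invoking the shift identity (\ref{shift_eqn}). The Christoffel symbols then follow from $\Gamma^{K}_{IJ} = g^{K\bar{L}}\partial_{I}g_{J\bar{L}}$: substituting this derivative and contracting against $g^{K\bar{L}} = U_{j_{2}l_{2}}Q_{l_{1}k_{1}}$, the matrices $U$ and $Q$ collapse by orthogonality, and the two surviving terms are precisely the two Kronecker-delta terms in the stated formula.

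For the Einstein equation I would invoke the K\"ahler identity $\mathrm{Ric}_{I\bar{J}} = -\partial_{I}\partial_{\bar{J}}\log\det(g_{K\bar{L}})$. Viewing $I = (i_{1},i_{2})$ as a pair, the $nm\times nm$ matrix $(g_{I\bar{J}})$ has a Kronecker-product structure in the $(i_{1},j_{1})$ and $(i_{2},j_{2})$ indices, giving $\det(g) = \det(U)^{-n}\det(Q)^{-m}$. The Sylvester-style identity $\det(\mathbb{I}_{m} + W^{\ast}W) = \det(\mathbb{I}_{n} + WW^{\ast})$ together with (\ref{U_eqn}) and (\ref{Q_eqn}) yields $\det(U) = \det(Q)$, and hence $-\log\det(g) = (n+m)\log\det(U)$.

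The final step is to verify $\partial_{I}\partial_{\bar{J}}\log\det(U) = g_{I\bar{J}}$. Computing $\partial_{\bar{J}}\log\det U = U^{-1}_{j_{2}s}w_{j_{1}s}$ directly from (\ref{U_eqn}), then differentiating in $w_{I}$ via (\ref{dU}), gives two contributions; combining them with the rearrangement $Q^{-1}_{i_{1}r}\overline{w}_{rs}w_{j_{1}s} = \delta_{i_{1}j_{1}} - Q^{-1}_{i_{1}j_{1}}$ (a direct consequence of (\ref{Q_eqn})) collapses the expression to exactly $U^{-1}_{j_{2}i_{2}}Q^{-1}_{i_{1}j_{1}} = g_{I\bar{J}}$. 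Multiplying by $n+m$ produces the asserted Einstein constant. The only real obstacle throughout is bookkeeping: each compound index $I = (i_{1},i_{2})$ must be paired against the correct $n\times n$ factor $Q$ and $m\times m$ factor $U$ at every stage, and a single transposed subscript in the K\"ahler-symmetry check or the Christoffel contraction would corrupt the chain.
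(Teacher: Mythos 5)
The paper gives no proof of this lemma (it is one of the statements explicitly ``left to the reader''), so there is nothing to compare against except the intended computation, and your proposal supplies it correctly: the metric and its inverse follow from Lemma \ref{Lem:Eul_seq} and the dual-metric convention, the K\"ahler symmetry and Christoffel symbols from the derivative identities (\ref{dU})--(\ref{dQ}), and the Einstein condition from $\mathrm{Ric}_{I\bar{J}}=-\partial_{I}\partial_{\bar{J}}\log\det(g)$ together with the Kronecker-product determinant $\det(g)=\det(U)^{-n}\det(Q)^{-m}$ and Sylvester's identity $\det(U)=\det(Q)$. One tiny remark: with the second form of (\ref{dU}) the expression $\partial g_{I\bar{J}}/\partial w_{K}$ is already manifestly symmetric under $I\leftrightarrow K$ (the two summands simply swap), so the appeal to (\ref{shift_eqn}) is optional rather than essential; everything else, including the key rearrangement $Q^{-1}_{i_{1}r}\overline{w}_{rs}w_{j_{1}s}=\delta_{i_{1}j_{1}}-Q^{-1}_{i_{1}j_{1}}$, checks out.
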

The coordinates $W$ are local {\it normal} holomorphic coordinates for the metric $g$.

\subsection{Seqsquilinear forms and the action of $SU_{n+m}$}
Let ${\gamma \in \sqrt{-1}\mathfrak{su}_{n+m}}$ (here we really just mean $\gamma$ is a trace-free Hermitian matrix) and define the sesquilinear form $P_{\gamma}$ on the ambient vector space $\mathbb{C}^{n+m}$ by
\[
P_{\gamma}(v_{1},v_{2}) :=v_{2}^{\ast}\gamma v_{1},
\]
where $v_{1},v_{2}\in \mathbb{C}^{n+m}$. This induces a sesquilinear form on the trivial $\mathbb{C}^{n+m}$-bundle which we also denote $P_{\gamma}$. There are a number of related objects that $P_{\gamma}$ can be used to create.\\ 
\\
Given a local orthonormal frame of the tautological subbundle $\mathcal{U}$,  $\{\eta_{j}\}$ say, we form the (locally defined) function
\begin{equation}\label{f_def}
f_{\gamma}(x) := \sum_{i=1}^{m}P_{\gamma}(\eta_{i}(x),\eta_{i}(x)).
\end{equation}
We shall see in the next subsection that $f_{\gamma}$ is a actually a globally defined function on the Grassmannian and is in fact an eigenfunction for the Laplacian.
\\
The form $P_{\gamma}$ induces forms on $\mathcal{U}^{\ast}$ and $\mathcal{U}^{\perp}$ which we denote $H_{1}(f_{\gamma})$ and $H_{2}(f_{\gamma})$ respectively.  We can then produce sesquilinear forms $h_{1}$ and $h_{2}$ on ${TG_{m}(\mathbb{C}^{n+m})\cong \mathcal{U}^{\ast} \otimes \mathcal{U}^{\perp}}$ by
\begin{equation}\label{deform_def}
h_{1}=H_{1}(f_{\gamma})\cdot g_{2} \qquad \mathrm{and} \qquad h_{2} =  g_{1} \cdot H_{2}(f_{\gamma}).
\end{equation}
There is a transitive action of $SU_{n+m}$ on $Gr_{m}(\mathbb{C}^{n+m})$ which lifts to an action on the bundles $\mathcal{U}$ and $\mathcal{U^{\perp}}$.  The seqsquilinear forms $h_{1}$ and $h_{2}$ are not invariant under this action but transform under the action to the form associated with $p^{-1}\gamma p$.  Hence, to prove various identities, we need only demonstrate their validity at $W=0$ for an arbitrary $\gamma$.
\subsection{Eigenfunctions of the Laplacian}

\begin{lemma}[Properties of $f_{\gamma}$]
	Let $f_{\gamma}$ be the function defined by Equation (\ref{f_def}). Then $f_{\gamma}$ has the following properties:
	\begin{enumerate}[(i)]
		\item The function $f_{\gamma}$ is independent of the orthonormal frame $\{\eta_{j}\}$ and thus is a globally defined function $f_{\gamma}:G_{m}(\mathbb{C}^{n+m})\rightarrow \mathbb{R}$.
		\item If we denote by $M^{\mathcal{U}}$ the matrix of functions defined by $M^{\mathcal{U}}_{ij}:=P_{\gamma}(c_{i},c_{j})$, then
		\[
		f_{\gamma} = (U^{-1}_{ij})M^{\mathcal{U}}_{ji}.
		\]
		\item  If we denote by $M^{\mathcal{Q}}$ the matrix of functions defined by $M^{\mathcal{Q}}_{ij}:=P_{\gamma}(q_{i},q_{j})$, then
		\[
		f_{\gamma} = -(Q^{-1}_{ij})M^{\mathcal{Q}}_{ji}.
		\]
		\item The function $f_{\gamma}$ is an eigenfunction of the Laplacian with eigenvalue $2(n+m)$
	\end{enumerate}
\end{lemma}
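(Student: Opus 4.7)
The plan is to establish parts (i)--(iii) by short linear-algebra manipulations and then reduce (iv) to a single computation at the origin $W = 0$ by the same $SU_{n+m}$-equivariance argument that the paper has already invoked for the sesquilinear forms $h_1, h_2$.

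For (i), two orthonormal frames of $\mathcal{U}$ are related by a unitary matrix $A$ via $\tilde\eta_i = A_{ji}\eta_j$, and unfolding $\sum_i P_\gamma(\tilde\eta_i,\tilde\eta_i)$ produces the factor $(AA^*)_{kj} = \delta_{kj}$, proving the sum is frame-independent and hence defines a global smooth function $f_\gamma$. For (ii) I would pass to an orthonormal frame $\eta_i = c_j B_{ji}$, where $B$ is chosen so that $BB^* = U^{-1}$ (equivalent to orthonormality of $\{\eta_i\}$), and repeat the previous calculation: the combination $\sum_i B_{ki}\bar{B}_{ji}$ assembles to $U^{-1}_{jk}$ and one is left with $\tr(U^{-1} M^{\mathcal{U}}) = U^{-1}_{ij} M^{\mathcal{U}}_{ji}$. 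Part (iii) is then an immediate consequence: uniting an orthonormal frame of $\mathcal{U}$ with an orthonormal frame of $\mathcal{U}^{\perp}$ produces an orthonormal basis of $\mathbb{C}^{n+m}$, so trace-freeness of $\gamma$ gives $\sum_i P_\gamma(\eta_i,\eta_i) = -\sum_j P_\gamma(\mu_j,\mu_j)$, and applying (ii) to the frame $\{q_j\}$ with Gram matrix $Q$ delivers the minus sign.

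Part (iv) is the substantive step. A brief check shows that under the $SU_{n+m}$-action one has $g^* f_\gamma = f_{g^{-1}\gamma g}$, so since the action is by isometries and $\gamma \mapsto g^{-1}\gamma g$ preserves the class of trace-free Hermitian matrices, it suffices to verify the eigenvalue identity at $W = 0$ for every such $\gamma$. At the origin the coordinates are normal (both $U^{-1}$ and $Q^{-1}$ equal the identity and their first derivatives vanish), so the Laplacian of a function reduces to $\Delta f|_0 = -2\sum_I \partial_I \partial_{\bar I} f|_0$. Differentiating $f_\gamma = U^{-1}_{ij} M^{\mathcal{U}}_{ji}$ twice and retaining only the terms that survive at the origin, two contributions remain: the $(\partial_I \partial_{\bar J} U^{-1}_{ij}) M^{\mathcal{U}}_{ji}$ piece, evaluated via Lemma \ref{U_Q_norm}, picks out entries of $\gamma$ from its upper $m \times m$ block, while the $U^{-1}_{ij} \partial_I \partial_{\bar J} M^{\mathcal{U}}_{ji}$ piece, obtained by differentiating the explicit columns $c_i$, picks out entries from the lower $n \times n$ block. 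After contracting with $\delta_{IJ}$ the upper-block trace appears with weight $n$ and the lower-block trace with weight $m$, and using $\tr \gamma = 0$ to identify the lower-block trace with $-T$, where $T := \sum_{i=1}^m \gamma_{ii}$, collapses these two contributions to $-(n+m) T$. Since $f_\gamma|_0 = T$ by (ii), the Laplacian identity $\Delta f_\gamma = 2(n+m) f_\gamma$ follows.

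The one place that demands care is the index bookkeeping in (iv): the coordinate index $I = (i_1, i_2)$ has to be tracked against the splitting $\mathbb{C}^{n+m} = \mathbb{C}^m \oplus \mathbb{C}^n$ correctly so that the $U^{-1}$-derivatives and the $M^{\mathcal{U}}$-derivatives reproduce the two block traces of $\gamma$ in the right form. Once both pieces are in hand, the trace-free condition on $\gamma$ is exactly what produces the single factor of $n+m$ and finishes the proof.
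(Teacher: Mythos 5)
Your proposal follows essentially the same route as the paper: parts (i)--(iii) are the same elementary frame-change and trace manipulations (the paper's (iii) is exactly your observation that $U^{-1}_{lk}M^{\mathcal{U}}_{kl}+Q^{-1}_{rs}M^{\mathcal{Q}}_{sr}=\tr(\gamma)=0$), and for (iv) the paper likewise reduces to $W=0$ via the $SU_{n+m}$-equivariance and computes $g^{I\bar J}\partial_I\partial_{\bar J}f_\gamma\big|_0=-n\sum_{k=1}^{m}\gamma_{kk}+m\sum_{k=1}^{n}\gamma_{(m+k)(m+k)}=-(n+m)f_\gamma(0)$. The only thing to fix is the sign bookkeeping in your summary of (iv): the upper-block trace enters with weight $-n$ (not $n$), which combined with the lower-block contribution $m\cdot(-T)$ yields the $-(n+m)T$ you correctly state as the endpoint.
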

\begin{proof}
	Properties (i) and (ii) are easily established. For (iii) we note that $\gamma$ is trace-free and 
	\[
	(U_{lk}^{-1})M_{kl}^{\mathcal{U}}+(Q^{-1}_{rs})M^{\mathcal{Q}}_{sr} = \mathrm{tr}(\gamma)=0.
	\]		
	To show (vi), we compute at $W=0$ and note $f_{\gamma}(0) = \sum_{k=1}^{m}\gamma_{kk}$. We also have 
	\[
	g^{I\bar{J}}\frac{\partial^{2}f_{\gamma}}{\partial w_{I} \partial \overline{w}_{J}}\bigg |_{W=0} = g^{I\bar{J}}\left(\frac{\partial^{2}(U^{-1}_{kl})}{\partial w_{I} \partial \overline{w}_{J}}M^{\mathcal{U}}_{lk}+ \frac{\partial (U^{-1}_{kl})}{\partial w_{I}}\frac{\partial M^{\mathcal{U}}_{lk} }{\partial \overline{w}_{J}} + \frac{\partial (U^{-1}_{kl})}{\partial \overline{w}_{J}}\frac{\partial M^{\mathcal{U}}_{lk} }{\partial {w}_{I}} +(U^{-1}_{kl}) \frac{\partial^{2}M^{\mathcal{U}}_{lk}}{\partial w_{I} \partial \overline{w}_{J}}\right)\bigg |_{W=0}
	\]
	\[
	=\sum_{I}\left(\frac{\partial^{2}(U^{-1}_{kl})}{\partial w_{I} \partial \overline{w}_{I}}M^{\mathcal{U}}_{lk}+\sum_{k=1}^{m} \frac{\partial^{2}(M^{\mathcal{U}}_{kk})}{\partial w_{I} \partial \overline{w}_{I}}\right)\bigg |_{W=0} = -n\sum_{k=1}^{m}\gamma_{kk}+m\sum_{k=1}^{n}\gamma_{(m+k)(m+k)}.
	\]
	The result follows from the fact $\gamma$ is trace-free and noting that for any function $\psi$ on a K\"ahler manifold $(\mathcal{M},g)$ with local holomorphic coordinates $z_{i}$,
	\[
	g^{k\bar{l}}\frac{\partial^{2}\psi}{\partial z_{k}\partial \bar{z}_{l}} = -\frac{1}{2}\Delta \psi.
	\]
\end{proof}
\subsection{The deformations}
We compute various quantities associated to the two sesquilinear forms given by Equation (\ref{deform_def}).  This will allow us to find transverse trace-free tensors. It is straightforward to see that in the local coordinates $W$, the tensor fields are given by
\[
(h_{1})_{I\bar{J}} = (U^{-1}_{j_{2}i_{2}})(Q^{-1}_{i_{1}k})(Q^{-1}_{lj_{1}})M^{\mathcal{Q}}_{kl}
\]
and
\[
(h_{2})_{I\bar{J}} = (U^{-1}_{j_{2}k})(U^{-1}_{li_{2}})M^{\mathcal{U}}_{kl}Q^{-1}_{i_{1}j_{1}}. 
\]
We begin by proving a result due to Gasqui and Goldschmidt relating the tensors $h_{1}$ and $h_{2}$ to the Hessian of the eigenfunction $f_{\gamma}$. The proof we give here is an elementary calculation in local normal coordinates and is very different from the one given in \cite{GGbook}.

\begin{lemma}[Gasqui--Goldschmidt, cf. Propsition 8.5 in \cite{GGbook}] \label{GG_Lem}
	Let $f_{\gamma}$ be given by Equation (\ref{f_def}) and let $h_{1}$ and $h_{2}$ be as in Equation (\ref{deform_def}). Then
	\begin{equation} \label{GG_eqn}
	\mathrm{Hess}(f_{\gamma}) = h_{1}-h_{2}.
	\end{equation}
\end{lemma}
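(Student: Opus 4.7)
My plan is to verify the identity \eqref{GG_eqn} pointwise at the origin $W=0$ and to propagate it over $G_m(\mathbb{C}^{n+m})$ using the $SU_{n+m}$-action. The construction of $P_\gamma$ is manifestly equivariant: the action of $p\in SU_{n+m}$ on $\mathbb{C}^{n+m}$ sends the sesquilinear form $P_\gamma$ to $P_{p^{-1}\gamma p}$, and since the frames $\{c_i\}$, $\{q_j\}$ and the splitting $\mathcal{U}\oplus\mathcal{U}^{\perp}$ are all natural, the tensors $f_\gamma$, $h_1$, and $h_2$ transform by replacing $\gamma$ with $p^{-1}\gamma p$. The Hessian of $f_\gamma$ transforms the same way. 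Because $W=0$ lies in every $SU_{n+m}$-orbit and the identity \eqref{GG_eqn} is to hold for \emph{every} trace-free Hermitian $\gamma$, it suffices to check it at $W=0$ for arbitrary $\gamma$.

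Since $W$ are local K\"ahler normal coordinates, the mixed Christoffel symbols $\Gamma^k_{I\bar J}$ vanish identically on a K\"ahler manifold, so $\mathrm{Hess}(f_\gamma)_{I\bar J}\big|_{W=0} = \partial_I\partial_{\bar J} f_\gamma\big|_{W=0}$. Using $f_\gamma = U^{-1}_{kl}M^{\mathcal{U}}_{lk}$, the Leibniz rule, and the fact from Lemma \ref{U_Q_norm} that the first derivatives of $U^{-1}$ vanish at $W=0$, only two terms survive:
\[
\partial_I\partial_{\bar J} f_\gamma\big|_{W=0} = \bigl(\partial_I\partial_{\bar J} U^{-1}_{kl}\bigr)M^{\mathcal{U}}_{lk}\big|_0 + U^{-1}_{kl}\bigl(\partial_I\partial_{\bar J} M^{\mathcal{U}}_{lk}\bigr)\big|_0.
\]
Lemma \ref{U_Q_norm} gives $\partial_I\partial_{\bar J} U^{-1}_{kl}\big|_0 = -\delta_{k i_2}\delta_{j_2 l}\delta_{i_1 j_1}$. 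Writing $\gamma$ in block form $\gamma = \bigl(\begin{smallmatrix} A & B \\ B^{\ast} & D\end{smallmatrix}\bigr)$ with $A$ of size $m\times m$, expanding $M^{\mathcal{U}}_{ij} = c_j^{\ast}\gamma c_i$ as a polynomial in $w,\bar w$, and reading off the coefficient of $w_{i_1 i_2}\bar w_{j_1 j_2}$ yields $\partial_I\partial_{\bar J} M^{\mathcal{U}}_{lk}\big|_0 = \delta_{l i_2}\delta_{k j_2}\,\gamma_{m+j_1,\,m+i_1}$. Substituting, I obtain
\[
\mathrm{Hess}(f_\gamma)_{I\bar J}\big|_{W=0} = -\delta_{i_1 j_1}\gamma_{i_2 j_2} + \delta_{i_2 j_2}\gamma_{m+j_1,\,m+i_1}.
\]

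For the right-hand side, at $W=0$ the frames $\{c_i\}$ and $\{q_j\}$ reduce to the first $m$ and last $n$ standard basis vectors of $\mathbb{C}^{n+m}$, so $M^{\mathcal{U}}_{ij}|_0 = \gamma_{ji}$ and $M^{\mathcal{Q}}_{ij}|_0 = \gamma_{m+j,\,m+i}$, and $U$, $Q$ are the identity. Plugging into the formulas for $h_1$ and $h_2$ from Equation \eqref{deform_def} gives immediately
\[
(h_1)_{I\bar J}\big|_{W=0} = \delta_{i_2 j_2}\gamma_{m+j_1,\,m+i_1}, \qquad (h_2)_{I\bar J}\big|_{W=0} = \delta_{i_1 j_1}\gamma_{i_2 j_2},
\]
so $(h_1-h_2)_{I\bar J}|_{W=0}$ matches the Hessian computed above, and by the equivariance argument the identity holds globally. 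The main obstacle is the bookkeeping in the second step: one must be careful to pick up the contribution of the lower-right block $D$ of $\gamma$ through the quadratic term $\bar w_{rj}\gamma_{m+r,\,m+s}w_{si}$ in the expansion of $M^{\mathcal{U}}$, since this is precisely what produces the $h_1$ contribution, while the $U^{-1}$ second-derivative term produces the $-h_2$ contribution; getting the indices to line up with those of Lemma \ref{U_Q_norm} is the only delicate part of the argument.
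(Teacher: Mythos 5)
Your computation of the $(1,1)$-components is correct and is essentially the same calculation the paper performs: reduce to $W=0$ by $SU_{n+m}$-equivariance, expand $\partial_I\partial_{\bar J}f_\gamma$ by Leibniz using the vanishing of the first derivatives of $U^{-1}$ at the origin, and match against $(h_1-h_2)_{I\bar J}$. However, there is a genuine gap: the identity $\mathrm{Hess}(f_\gamma)=h_1-h_2$ is an identity of real symmetric $2$-tensors, and the right-hand side is $J$-invariant (pure type $(1,1)$), whereas a priori $\mathrm{Hess}(f_\gamma)$ also has a $J$-anti-invariant part, namely $\nabla_I\nabla_J f_\gamma$ and its conjugate. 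Your argument only identifies $\mathrm{Hess}(f_\gamma)_{I\bar J}$ and never shows that the $(2,0)+(0,2)$ components vanish, so as written you have only proved that the $(1,1)$-parts of the two sides agree.

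The paper closes exactly this gap before doing the coordinate computation: since $f_\gamma$ is an eigenfunction of the Laplacian with eigenvalue $2(n+m)$, twice the Einstein constant, Matsushima's theorem implies $J\nabla f_\gamma$ is a holomorphic (Killing) vector field, which forces the Hessian to be $J$-invariant. You could instead fill the gap within your own framework: at $W=0$ the Christoffel symbols vanish, so $\nabla_I\nabla_J f_\gamma|_0=\partial_I\partial_J f_\gamma|_0$, and the same Leibniz expansion shows this is zero because $M^{\mathcal U}$ is affine in $w$ (so $\partial_I\partial_J M^{\mathcal U}=0$), $\partial_I\partial_J U^{-1}|_0=0$, and all cross terms involve a first derivative of $U^{-1}$ at the origin. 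Either way, some argument for the vanishing of the $J$-anti-invariant part must be supplied; without it the proof is incomplete.
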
	
\begin{proof}
	As $f_{\gamma}$ is an eigenfunction with eigenvalue twice the Einstein constant, Matsushima's Theorem \cite{Mat} means that $J\nabla f_{\gamma}$ is a holomorphic vector field. It follows that the Hessian of $f_{\gamma}$ has no $J$-anti-invariant component and so we are done if we can establish in local coordinates
	\[
	\frac{\partial^{2}f_{\gamma}}{\partial w_{I}\partial\bar{w}_{J}} = (h_{1})_{I\bar{J}}-(h_{2})_{I\bar{J}}
	\]
	We compute at $W=0$ and we have already seen 
	\[
	\frac{\partial^{2}f_{\gamma}}{\partial w_{I} \partial \overline{w}_{J}} \bigg |_{W=0} = \left( \frac{\partial^{2}(U^{-1}_{kl})}{\partial w_{I} \partial \overline{w}_{J}}M^{\mathcal{U}}_{lk}+(U^{-1}_{kl}) \frac{\partial^{2}M^{\mathcal{U}}_{lk}}{\partial w_{I} \partial \overline{w}_{J}}\right) \bigg |_{W=0} =  -\delta_{i_{1}j_{1}}\gamma_{i_{2}j_{2}}+\delta_{i_{2}j_{2}}\gamma_{(j_{1}+m)(i_{1}+m)}.
	\]
	Using the definition of $h_{1}$ and $h_{2}$, we calculate
	\[
	(h_{1}-h_{2})_{I\bar{J}} = \delta_{i_{2}j_{2}}M^{Q}_{i_{1}j_{1}}-\delta_{i_{1}j_{1}}M^{\mathcal{U}}_{j_{2}i_{2}}=-\delta_{i_{1}j_{1}}\gamma_{i_{2}j_{2}} + \delta_{i_{2}j_{2}}\gamma_{(j_{1}+m)(i_{1}+m)}.
	\]
\end{proof}

\begin{lemma}\label{Lem_div}
	Let $f_{\gamma}$ be given by Equation (\ref{f_def}) and let $h_{1}$ and $h_{2}$ be as in Equation (\ref{deform_def}). Then  the traces are given by
	\[
	g^{I\bar{J}}(h_{1})_{I\bar{J}} = -mf_{\gamma}  \qquad \mathrm{and} \qquad  g^{I\bar{J}}(h_{2})_{I\bar{J}} = nf_{\gamma}.
	\]
	If we denote by $\sigma_{1}$ and $\sigma_{2}$ the  $(1,1)$-forms associated to $h_{1}$ and $h_{2}$ respectively, then
	\[
	\overline{\partial}^{\ast}\sigma_{1} =-n \sqrt{-1}\partial f_{\gamma}  \qquad \mathrm{and} \qquad 	\overline{\partial}^{\ast}\sigma_{2} =m\sqrt{-1}\partial f_{\gamma}.
	\]
\end{lemma}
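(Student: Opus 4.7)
The plan is to handle the two claims separately: the trace identities by direct matrix algebra, and the divergence identities by combining Lemma \ref{GG_Lem} with a direct calculation at $W=0$ enabled by the $SU_{n+m}$-equivariance.

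For the trace identities, I would simply substitute $g^{I\bar{J}} = U_{i_2 j_2}Q_{j_1 i_1}$ together with the explicit expressions for $(h_1)_{I\bar{J}}$ and $(h_2)_{I\bar{J}}$. In the contraction $g^{I\bar J}(h_1)_{I\bar J}$ the factor $U_{i_2j_2}U^{-1}_{j_2 i_2}$ collapses to $\mathrm{tr}(\mathbb{I}_m)=m$, after which the surviving matrix product $Q_{j_1 i_1}Q^{-1}_{i_1 k}Q^{-1}_{lj_1}M^{\mathcal{Q}}_{kl}$ telescopes to $Q^{-1}_{lk}M^{\mathcal{Q}}_{kl}$, which by part (iii) of the previous lemma equals $-f_\gamma$; this yields $-mf_\gamma$. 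The $h_2$ case is symmetric: $Q_{j_1 i_1}Q^{-1}_{i_1 j_1}=\mathrm{tr}(\mathbb{I}_n)=n$ collapses first, and what remains is $nf_\gamma$ by part (ii).

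For the divergence identities I would first exploit Lemma \ref{GG_Lem}: since $h_1-h_2=\mathrm{Hess}(f_\gamma)$, the associated $(1,1)$-forms satisfy $\sigma_1-\sigma_2 = \sigma_{\mathrm{Hess}(f_\gamma)} = \sqrt{-1}\,\partial\bar\partial f_\gamma$ (with the appropriate $\sigma\leftrightarrow h$ convention). Applying $\bar\partial^*$, using $\partial\bar\partial = -\bar\partial\partial$, the degree-zero identity $\bar\partial^*(\partial f_\gamma)=0$, and the K\"ahler commutation $[\Delta_{\bar\partial},\partial]=0$, gives
\[
\bar\partial^*(\sigma_1-\sigma_2) = -\sqrt{-1}\,\partial(\Delta_{\bar\partial}f_\gamma) = -(n+m)\sqrt{-1}\,\partial f_\gamma,
\]
using $\Delta_{\bar\partial}f_\gamma = \tfrac{1}{2}\Delta f_\gamma = (n+m)f_\gamma$ on a K\"ahler manifold. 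This is exactly consistent with the proposed split $-n\sqrt{-1}\,\partial f_\gamma - m\sqrt{-1}\,\partial f_\gamma$, so it remains only to fix one of the two individual values.

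To pin down $\bar\partial^*\sigma_2$, I would use the $SU_{n+m}$-equivariance noted after Equation (\ref{deform_def}): since the action sends $\gamma$ to $p^{-1}\gamma p$, verifying the identity at $W=0$ for an arbitrary Hermitian $\gamma$ implies it everywhere. At $W=0$, Christoffels vanish so $\nabla_L=\partial_L$, and by Lemma \ref{U_Q_norm} the first derivatives of $U^{-1}$ and $Q^{-1}$ vanish as well. Thus in $\partial_L(h_2)_{I\bar J}$ only the derivative of $M^{\mathcal{U}}_{kl}=c_l^*\gamma c_k$ contributes. Using $\partial c_k/\partial w_{l_1 l_2} = \delta_{k l_2}e_{m+l_1}$ (and that $c_l^*$ is anti-holomorphic in $W$) yields $\partial_L M^{\mathcal{U}}_{j_2 i_2}|_0 = \delta_{j_2 l_2}\gamma_{i_2,m+l_1}$. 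Contracting with $g^{L\bar J}|_0 = \delta_{l_2j_2}\delta_{j_1 l_1}$ in the local formula for $\bar\partial^*$, the sum over $l_2$ from $1$ to $m$ produces the factor of $m$. A parallel short calculation (essentially the same one already done in the proof of part (iv) above) gives $\partial_I f_\gamma|_0 = \gamma_{i_2, m+i_1}$, so the two quantities match up to the overall constant $m\sqrt{-1}$, establishing $\bar\partial^*\sigma_2 = m\sqrt{-1}\,\partial f_\gamma$; the formula for $\sigma_1$ then follows from the linear relation in the previous paragraph.

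The main obstacle is not conceptual but clerical: ensuring that the sign/$\sqrt{-1}$ conventions used in defining the $(1,1)$-form $\sigma$ from the Hermitian tensor $h$ and in the local coordinate formula for $\bar\partial^*$ are mutually consistent, so that the direct normal-coordinate computation at $W=0$ and the indirect Hessian computation agree in sign.
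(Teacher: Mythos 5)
Your proposal is correct and follows essentially the same route as the paper: the traces are checked by direct matrix algebra via parts (ii) and (iii) of the lemma on $f_{\gamma}$, the quantity $\bar{\partial}^{\ast}\sigma_{2}$ is computed directly at $W=0$ (where only the derivative of $M^{\mathcal{U}}$ survives and the Kronecker-delta contraction produces the factor $m$), and $\bar{\partial}^{\ast}\sigma_{1}$ is then deduced from the Gasqui--Goldschmidt relation $\mathrm{Hess}(f_{\gamma})=h_{1}-h_{2}$ together with $\Delta_{\bar{\partial}}f_{\gamma}=(n+m)f_{\gamma}$. The only difference is cosmetic: you spell out the trace computation that the paper labels straightforward, and you make explicit the equivariance argument that the paper invokes implicitly when restricting to $W=0$.
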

\begin{proof}
	The statements about traces are straightforward. For the statement about the codifferential $\overline{\partial}^{\ast}$ we compute in the coordinates at $W=0$.\\
	We note that $(\partial f_{\gamma})_{I} = \gamma_{i_{2}(i_{1}+m)}$. The associated form $\sigma_{2}$ is given by
	\[
	\sigma_{2} = \sqrt{-1}(h_{2})_{I\bar{J}} \ dw_{I}\wedge d\overline{w}_{{J}},
	\]
	and thus
	\[
	(\overline{\partial}^{\ast}\sigma_{2})_{I} = \sqrt{-1}g^{K\bar{J}}\nabla_{K}(h_{2})_{I\bar{J}}= \sqrt{-1} \frac{\partial(h_{2})_{I\bar{J}}}{\partial w_{J}} = \sqrt{-1}\delta_{j_{2}k}\delta_{li_{2}}\delta_{kj_{2}}\gamma_{l(j_{1}+m)}\delta_{i_{1}j_{1}}
	\]
	\[
	=m\sqrt{-1}\gamma_{i_{2}(i_{1}+m)}.
	\]
	With this in hand, we can appeal to the Gasqui--Goldschmidt Lemma \ref{GG_Lem}. We have
	\[
	-\sqrt{-1}\partial\Delta_{\overline{\partial}}f=\overline{\partial}^{\ast}\sqrt{-1}\partial\overline{\partial}f_{\gamma} = \overline{\partial}^{\ast}\sigma_{1} - \overline{\partial}^{\ast}\sigma_{2} = \overline{\partial}^{\ast}\sigma_{1} -m\sqrt{-1}\partial f_{\gamma}.
	\]
	Hence 
	\[
	-(n+m)\sqrt{-1}\partial f_{\gamma} = \overline{\partial}^{\ast}\sigma_{1}-m\sqrt{-1}\partial f_{\gamma},
	\]
	and the result follows.
\end{proof}
The final quantity we need to compute is the $\overline{\partial}$-Laplacian applied to the forms $\sigma_{1}$ and $\sigma_{2}$.
\begin{lemma} \label{Lem_Lap}
	Let $\sigma_{1}$ and $\sigma_{2}$ be the $(1,1)$-forms associated to the tensors $h_{1}$ and $h_{2}$ respectively. Then
	\[
	\Delta_{\overline{\partial}}\ \sigma_{1}=(n+m)\sigma_{1} \qquad \mathrm{and} \qquad \Delta_{\overline{\partial}} \ \sigma_{2}=(n+m)\sigma_{2}.
	\]
\end{lemma}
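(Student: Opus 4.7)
The plan is to decompose $\Delta_{\bar\partial}\sigma_i = \bar\partial\bar\partial^{*}\sigma_i + \bar\partial^{*}\bar\partial\sigma_i$ and to leverage the computations already established. The first piece is immediate from Lemmas \ref{GG_Lem} and \ref{Lem_div}: using $\bar\partial^{*}\sigma_1 = -n\sqrt{-1}\,\partial f_\gamma$, the anti-commutation $\bar\partial\partial = -\partial\bar\partial$ (from $d^{2}=0$), and the Gasqui--Goldschmidt identity rewritten as $\sqrt{-1}\,\partial\bar\partial f_\gamma = \sigma_1-\sigma_2$, one obtains
\[
\bar\partial\bar\partial^{*}\sigma_1 = -n\sqrt{-1}\,\bar\partial\partial f_\gamma = n\sqrt{-1}\,\partial\bar\partial f_\gamma = n(\sigma_1 - \sigma_2),
\]
and the analogous argument yields $\bar\partial\bar\partial^{*}\sigma_2 = -m(\sigma_1-\sigma_2)$. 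Applying $\bar\partial$ to the Gasqui--Goldschmidt identity gives $\bar\partial\sigma_1 = \bar\partial\sigma_2$, so $\bar\partial^{*}\bar\partial\sigma_1 = \bar\partial^{*}\bar\partial\sigma_2$; denoting this common $(1,1)$-form by $X$, the two assertions of the lemma are simultaneously equivalent to the single identity
\[
X = m\sigma_1 + n\sigma_2.
\]

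To establish this identity I would compute $\bar\partial^{*}\bar\partial\sigma_1$ at the distinguished point $W=0$ and then invoke the $SU_{n+m}$-equivariance of the construction (together with transitivity of the $SU_{n+m}$-action on $G_{m}(\mathbb{C}^{n+m})$) to conclude the identity globally for every $\gamma$. By Lemma \ref{U_Q_norm} the coordinates $W$ are normal at the origin, so $g^{K\bar L}|_{W=0} = \delta_{KL}$ and the Christoffel symbols vanish there; consequently both $\bar\partial$ and $\bar\partial^{*}$ collapse to ordinary partial derivatives at $W=0$. Starting from
\[
(h_1)_{I\bar J} = U^{-1}_{j_2 i_2}Q^{-1}_{i_1 k}Q^{-1}_{lj_1}M^{\mathcal{Q}}_{kl},
\]
together with the observation that $M^{\mathcal{Q}}_{kl} = q_l^{*}\gamma\, q_k$ depends linearly on $W$ (through $q_l^{*}$) and linearly on $\bar W$ (through $q_k$), the computation reduces to a finite sum of Kronecker contractions built from the derivatives supplied by Lemma \ref{U_Q_norm}.

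The main obstacle is the combinatorial bookkeeping of these contributions: the Leibniz rule produces terms in which the two derivatives hit distinct factors of the product, and one must carefully track which contractions produce a dimension factor $m$ (from sums over the $\mathcal{U}$-indices $a\in\{1,\ldots,m\}$) and which produce a factor $n$ (from sums over the $\mathcal{Q}$-indices). The trace-free condition $\tr(\gamma)=0$ must then be invoked to rearrange the residual $\gamma$-traces into exactly $m(h_1)_{I\bar J} + n(h_2)_{I\bar J}$, mirroring the balance of $n$, $m$ and tracefreeness that produced the eigenvalue $2(n+m)$ in the earlier calculation of $\Delta f_\gamma$. Once this step is completed, the identity extends globally by equivariance and both statements of the lemma follow simultaneously.
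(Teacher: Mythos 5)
Your reduction follows essentially the same route as the paper: split $\Delta_{\bar\partial}=\bar\partial\bar\partial^{*}+\bar\partial^{*}\bar\partial$, dispose of the first piece via Lemma \ref{Lem_div} and the identity $\sqrt{-1}\,\partial\bar\partial f_{\gamma}=\sigma_{1}-\sigma_{2}$, and evaluate the second piece in the normal coordinates at $W=0$, extending globally by $SU_{n+m}$-equivariance. All the steps you actually carry out are correct, and your observation that $\bar\partial\sigma_{1}=\bar\partial\sigma_{2}$ collapses both eigenvalue equations into the single identity $\bar\partial^{*}\bar\partial\sigma_{2}=m\sigma_{1}+n\sigma_{2}$ is a genuine (if small) improvement: it makes the two assertions manifestly equivalent, whereas the paper computes $\Delta_{\bar\partial}\sigma_{2}$ in full and then deduces the $\sigma_{1}$ case from Lemma \ref{GG_Lem}. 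Your target identity is consistent with the paper's computation, which yields $(\bar\partial^{*}\bar\partial\sigma_{2})_{I\bar J}=n\delta_{i_1j_1}\gamma_{i_2j_2}+m\delta_{i_2j_2}\gamma_{(j_1+m)(i_1+m)}=m(h_{1})_{I\bar J}+n(h_{2})_{I\bar J}$ at $W=0$.

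The gap is that this identity, which is the entire substantive content of the lemma, is announced as a target and never verified: you describe the Leibniz-rule bookkeeping as ``the main obstacle'' and stop there. The paper does exactly this work. Concretely, one writes $(\bar\partial^{*}\bar\partial\sigma_{2})_{I\bar J}=-\sqrt{-1}\bigl(\tfrac{\partial^{2}(h_{2})_{I\bar J}}{\partial w_{K}\partial\bar w_{K}}-\tfrac{\partial^{2}(h_{2})_{I\bar K}}{\partial w_{K}\partial\bar w_{J}}\bigr)$ at $W=0$ (the codifferential of a $(1,2)$-form is first order, so no curvature corrections enter at the centre of normal coordinates --- your ``collapse to partial derivatives'' claim is fine), and then evaluates both contractions using the second derivatives of $U^{-1}$ and $Q^{-1}$ from Lemma \ref{U_Q_norm}, which do \emph{not} vanish at $W=0$ and contribute the dimension factors alongside the terms where both derivatives hit $M^{\mathcal U}$. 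The paper obtains $\tfrac{\partial^{2}(h_{2})_{I\bar J}}{\partial w_{K}\partial\bar w_{K}}=-(2n+m)\delta_{i_1j_1}\gamma_{i_2j_2}+\delta_{i_1j_1}\delta_{i_2j_2}\sum_{r}\gamma_{(r+m)(r+m)}$ and a companion expression for the second term, and only after combining them and using $\tr(\gamma)=0$ does the answer assemble into $m\sigma_1+n\sigma_2$. Until you perform this evaluation (or an equivalent one), the lemma has been reformulated but not proved.
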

\begin{proof}
	We begin by computing $\Delta_{\bar{\partial}}\sigma_{2}$.  Using Lemma \ref{Lem_div}, we have
	\[
	\Delta_{\bar{\partial}} \ \sigma_{2}= \left(\bar{\partial}\bar{\partial}^{\ast}+\bar{\partial}^{\ast}\bar{\partial}\right)\sigma_{2} = -m\sqrt{-1}\partial\bar{\partial} f_{\gamma}+ \bar{\partial}^{\ast}\bar{\partial}\sigma_{2}.
	\]
	At $W=0$, 
	\[
	(\sqrt{-1}\partial\bar{\partial} f_{\gamma})_{I\bar{J}} = \sqrt{-1}\left(-\delta_{i_{1}j_{1}}\gamma_{i_{2}j_{2}}+\delta_{i_{2}j_{2}}\gamma_{(j_{1}+m)(i_{1}+m)}\right).
	\]
	To calculate $\bar{\partial}^{\ast}\bar{\partial}\sigma_{2}$, we begin by noting that for a general $(1,2)$-form,
	\[
	T= T_{\bar{k}i\overline{l}} \ d\bar{z}_{k}\wedge dz_{i}  \wedge d \bar{z}_{l},
	\]
	we have, in holomorphic normal coordinates at $z=0$, 
	\[
	(\overline{\partial}^{\ast}T)_{r\overline{s}} = -\left(\frac{\partial T_{\bar{k}r\bar{s}}}{\partial z_{k}}-\frac{\partial T_{\bar{s}r\bar{k}}}{\partial z_{k}}\right).
	\]
	Hence we have, at $W=0$, 
	\[
	\left(\bar{\partial}^{\ast}\bar{\partial}\sigma_{2}\right)_{I\bar{J}} =-\sqrt{-1}\left(\frac{\partial^{2}(h_{2})_{I\bar{J}}}{\partial{w}_{K}\partial\bar{w}_{K}} -\frac{\partial^{2}(h_{2})_{I\bar{K}}}{\partial w_{K}\partial\bar{w}_{J}}  \right) \bigg |_{W=0}
	\]
	Using the expression for $h_{2}$ in local coordinates and the fact that from Lemma \ref{U_Q_norm}, single derivatives of the quantities $U^{-1}_{rs}$ and $Q^{-1}_{rs}$ vanish at $W=0$, we have
	\[
	\frac{\partial^{2}(h_{2})_{I\bar{J}}}{\partial{w}_{K}\partial\bar{w}_{K}}\bigg |_{W=0}  = -(2n+m)\delta_{i_{1}j_{1}}\gamma_{i_{2}j_{2}}+\delta_{i_{1}j_{1}}\delta_{i_{2}j_{2}}\sum_{r=1}^{n}\gamma_{(r+m)(r+m)}
	\]
	and 
	\[
	\frac{\partial^{2}(h_{2})_{I\bar{K}}}{\partial w_{K}\partial\bar{w}_{J}}\bigg |_{W=0}  = -(n+m)\delta_{i_{1}j_{1}}\gamma_{i_{2}j_{2}}-\delta_{i_{1}j_{1}}\delta_{i_{2}j_{2}}\sum_{k}\gamma_{kk}+m\delta_{i_{2}j_{2}}\gamma_{(j_{1}+m)(i_{1}+m)}
	\]
	Thus, at $W=0$,
	\[
	\left(\bar{\partial}^{\ast}\bar{\partial}\sigma_{2}\right)_{I\bar{J}} = n\delta_{i_{1}j_{1}}\gamma_{i_{2}j_{2}}+m\delta_{i_{2}j_{2}}\gamma_{(j_{1}+m)(i_{1}+m)}.
	\]
	So
	\[
	(\Delta_{\bar{\partial}} \ \sigma_{2})_{I\bar{J}} = (n+m)\delta_{i_{1}j_{1}}\gamma_{i_{2}j_{2}}= (n+m)(\sigma_{2})_{I\bar{J}}.
	\]
	The result for $\sigma_{1}$ follows from the Gasqui--Goldschmidt Lemma \ref{GG_Lem}.
\end{proof}
\begin{theorem}[Infinitesimal deformations of the Fubini--Study metric] \label{Thm:deform}
	Let $h_{1}$ and $h_{2}$ be as in Equation (\ref{deform_def}) and let $h_{3}=f_{\gamma}g$. Then, if $m,n\geq2$,	
	\[
	\mathcal{D}_{\gamma} := n(1-m^{2})h_{1}+m(1-n^{2})h_{2}+(n^{2}-m^{2})h_{3}, 
	\]
	is an EID for the Fubini--Study metric on $G_{m}(\mathbb{C}^{n+m})$.
\end{theorem}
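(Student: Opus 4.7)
The plan is to verify the three K\"ahler EID conditions---primitivity, $\bar{\partial}^{\ast}$-closedness, and being an $(n+m)$-eigenform of $\Delta_{\bar{\partial}}$---on the $(1,1)$-form associated to $\mathcal{D}_{\gamma}$. Since $h_{1},h_{2}$ are built from sesquilinear forms on $\mathcal{U}^{\ast}\otimes\mathcal{U}^{\perp}$ they are automatically $J$-invariant, and $h_{3}=f_{\gamma}g$ is trivially $J$-invariant, so $\mathcal{D}_{\gamma}$ is $J$-invariant and I may pass to the associated $(1,1)$-form $\sigma_{\mathcal{D}_{\gamma}} = n(1-m^{2})\sigma_{1}+m(1-n^{2})\sigma_{2}+(n^{2}-m^{2})f_{\gamma}\omega$, where $\omega$ is the K\"ahler form.

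First I would verify trace-freeness by combining the identities $\mathrm{tr}_{g}(h_{1})=-m f_{\gamma}$ and $\mathrm{tr}_{g}(h_{2})=n f_{\gamma}$ from Lemma \ref{Lem_div} with the elementary $\mathrm{tr}_{g}(f_{\gamma}g)=nm\,f_{\gamma}$ (the complex dimension of the Grassmannian being $nm$). A direct linear combination shows the three prescribed coefficients are chosen precisely so that the total trace vanishes.

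Next, for co-closedness, Lemma \ref{Lem_div} supplies $\bar{\partial}^{\ast}\sigma_{1}$ and $\bar{\partial}^{\ast}\sigma_{2}$; the missing ingredient is $\bar{\partial}^{\ast}(f_{\gamma}\omega)$, which I would obtain from the standard K\"ahler identity $[\bar{\partial}^{\ast},L]=\sqrt{-1}\,\partial$ applied to $L(f_{\gamma})=f_{\gamma}\omega$, together with $\bar{\partial}^{\ast}f_{\gamma}=0$, yielding $\bar{\partial}^{\ast}(f_{\gamma}\omega)=\sqrt{-1}\,\partial f_{\gamma}$. Summing the three contributions with the stated coefficients then produces a scalar multiple of $\sqrt{-1}\,\partial f_{\gamma}$ that vanishes by direct algebra.

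The eigenvalue equation is the final piece. Lemma \ref{Lem_Lap} handles $\sigma_{1},\sigma_{2}$, so I only need to verify $\Delta_{\bar{\partial}}(f_{\gamma}\omega)=(n+m)f_{\gamma}\omega$. This I would establish by expanding $\bar{\partial}\bar{\partial}^{\ast}+\bar{\partial}^{\ast}\bar{\partial}$ using $\bar{\partial}\omega=0$ and the commutator identity above, reducing the computation to $(\Delta_{\bar{\partial}}f_{\gamma})\omega + \sqrt{-1}(\bar{\partial}\partial+\partial\bar{\partial})f_{\gamma}$, in which the second term vanishes from $d^{2}=0$; the first equals $(n+m)f_{\gamma}\omega$ because $\Delta f_{\gamma}=2(n+m)f_{\gamma}$ forces $\Delta_{\bar{\partial}}f_{\gamma}=(n+m)f_{\gamma}$ on this K\"ahler manifold. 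The main obstacle is really just algebraic bookkeeping---the coefficients $n(1-m^{2}), m(1-n^{2}), n^{2}-m^{2}$ are determined up to overall rescaling by the $2\times 2$ system enforcing simultaneous vanishing of trace and codifferential, after which the Laplacian condition follows automatically from Lemmas \ref{Lem_div}, \ref{Lem_Lap} and the above extension to $f_{\gamma}\omega$.
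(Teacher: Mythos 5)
Your proposal is correct and follows essentially the same route as the paper: reduce to the three K\"ahler conditions on the associated $(1,1)$-form, invoke Lemmas \ref{Lem_div} and \ref{Lem_Lap} for $\sigma_{1},\sigma_{2}$, verify the corresponding facts for $\sigma_{3}=f_{\gamma}\omega$ separately, and check that the coefficients kill the trace and codifferential. The only difference is that you supply the K\"ahler-identity argument ($[\bar{\partial}^{\ast},L]=\sqrt{-1}\,\partial$) for the $f_{\gamma}\omega$ piece, which the paper leaves as ``easy to check.''
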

\begin{proof}
	It is easy to check that $\sigma_{3} = f_{\gamma}\omega$ (the two-form associated to $h_{3}$) satisfies
	\[
	\Lambda(\sigma_{3}) = mnf_{\gamma}, \qquad \bar{\partial}^{\ast}\sigma_{3} = \sqrt{-1}\partial f_{\gamma}, \qquad \mathrm{and} \qquad \Delta_{\bar{\partial}}\sigma_{3}=(n+m)\sigma_{3} 
	\]
	The result follows by combining these calculations with those in Lemma \ref{Lem_div} and Lemma \ref{Lem_Lap}.
\end{proof} 
In the case that $m=1$, it is easy to see that $h_{2}=f_{\gamma}g=h_{3}$ and so $\mathcal{D}_{\gamma}$ vanishes (as expected as it was demonstrated in \cite{KoiOsaka1} that the Fubini--Study metric on $\mathbb{CP}^{n}$ does not admit EIDs). When $m=n$, we also recover Proposition 8.6 from the book \cite{GGbook} which says that $h_{1}+h_{2}$ is an EID for the metric on $G_{n}(\mathbb{C}^{2n})$.
 
\section{Computing Koiso's obstruction}
\subsection{The Theorem of Adler and van Moerbeke}
As we shall see in subsequent subsections, to compute the terms in the Koiso obstruction (\ref{Koiobs}), we will need to consider integrals of the form
\[
\int_{\mathbb{C}^{mn}}\tr(U^{-1})^{\alpha} d\mu(W)  \qquad \mathrm{and} \qquad \int_{\mathbb{C}^{mn}}\tr(U^{-\beta}) d\mu(W),
\]
where $\alpha,\beta \in \{0,1,2,3\}$, $W$ are the coordinates on the dense copy of $\mathbb{C}^{mn}$ contained in $G_{m}(\mathbb{C}^{n+m})$ and $d\mu(W)$ is shorthand for the (possibly rescaled) volume form defined by the Fubini--Study metric. In the local coordinates $W$, 
\[
\tr(U^{-1}) = \tr((\mathbb{I}_{m}+W^{t}\overline{W})^{-1}) = \tr((\mathbb{I}_{m}+W^{\ast}W)^{-1}).
\]

In \cite{AvM}, Adler and van Moerbeke considered integrals of the form
\[
\int_{\mathbb{C}^{mn}}e^{x \tr((\mathbb{I}_{m}+W^{\ast}W)^{-1})}d\mu(W).
\]

What they demonstrate is that one can use the Weyl integration formula to write the previous integral as a multiple of what is known as a generalised Selberg-type integral. Paraphrasing Theorem 1.1 in \cite{AvM} we have
\[
\int_{\mathbb{C}^{mn}}e^{x \tr((\mathbb{I}_{m}+W^{\ast}W)^{-1})}d\mu(W) = 
\int_{[0,1]^{m}}e^{x\sum_{i=1}^{m}z_{i}}\Delta_{m}(z)^{2}\prod_{1}^{m}(1-z_{i})^{n-m} dz_{1}dz_{2}\ldots dz_{m}, 
\]
where $\Delta_{m}(z)$ is the Vandermonde determinant in the $m$ variables $z_{1},z_{2},\ldots, z_{m}$. The $\sum_{i}{z_{i}}$ term corresponds directly to the sum of the eigenvalues of $(\mathbb{I}_{m}+W^{\ast}W)^{-1}$. Hence we obtain directly
\begin{equation} \label{alpha_int}
\int_{\mathbb{C}^{mn}}(\tr(U^{-1}))^{\alpha} d\mu(W) = \int_{[0,1]^{m}}\left(\sum_{i=1}^{m}z_{i}\right)^{\alpha}\Delta_{m}(z)^{2}\prod_{1}^{m}(1-z_{i})^{n-m} dz_{1}dz_{2}\ldots dz_{m}.
\end{equation}
Tracing through the proof of Theorem 1.1 in \cite{AvM}, one also obtains the easy generalisation 
\begin{equation}\label{beta_int}
\int_{\mathbb{C}^{mn}}\tr(U^{-\beta}) d\mu(W) = \int_{[0,1]^{m}}\left(\sum_{i=1}^{m}z_{i}^{\beta}\right)\Delta_{m}(z)^{2}\prod_{1}^{m}(1-z_{i})^{n-m} dz_{1}dz_{2}\ldots dz_{m}.
\end{equation}
The Selberg-type integrals on the right of Equations (\ref{alpha_int}) and (\ref{beta_int}) have been widely studied. We use the results of Kanecko \cite{Kan} (essentially Equation (2.3.13) in \cite{AvM}) which says for a Schur function $s_{\lambda}(z)$ in $m$-variables corresponding to a partition $\lambda=(\lambda_{1},\lambda_{2},\ldots,\lambda_{m})$, and integers $a,b>m-1$,
\begin{small}
\begin{equation*}
\int_{[0,1]^{m}}s_{\lambda}(z)\Delta_{m}(z)^{2}\prod_{1}^{m}(1-z_{i})^{a-m}z_{i}^{b-m} dz = s_{\lambda}(1^{m})\prod_{1}^{m}\frac{\Gamma(i+1)\Gamma(a+1-i)\Gamma(\lambda_{i}+b+1-i)}{\Gamma(\lambda_{i}+a+b+(1-i))},
\end{equation*}
\end{small}
where $1^{m} = (1,1,\ldots,1)$. Evaluating using  $a=n$ and $b=m$ yields
\begin{small}
\begin{equation}
\int_{[0,1]^{m}}s_{\lambda}(z)\Delta_{m}(z)^{2}\prod_{1}^{m}(1-z_{i})^{n-m} dz_{1}dz_{2}\ldots dz_{m}=s_{\lambda}(1^{m})\prod_{1}^{m}\frac{i!(n-i)!(m+\lambda_{i}-i)!}{(n+m+\lambda_{i}-i)!}.
\end{equation}
\end{small}
We fix the constant
\[
C_{n,m} = \prod_{j=1}^{m}\frac{(n-j)!(m-j)!j!}{(n+m-j)!}.
\]
If we abuse notation by ignoring zeros at the end of partitions and write
\[
\int s_{\lambda}  = \int_{[0,1]^{m}}s_{\lambda}(z)\Delta_{m}(z)^{2}\prod_{1}^{m}(1-z_{i})^{n-m} dz_{1}dz_{2}\ldots dz_{m},
\]
we have
\[
C_{n,m}\int s_{(0)} = 1, \qquad C_{n,m}\int s_{(1)} = \frac{m^{2}}{n+m},
\]
\[
C_{n,m}\int s_{(2,0)} = \frac{m^{2}(m+1)^{2}}{2(n+m)(n+m+1)},  \qquad C_{n,m}\int s_{(1,1)} = \frac{m^{2}(m-1)^{2}}{2(n+m)(n+m-1)},
\]
\[
C_{n,m}\int s_{(3,0,0)} = \frac{(m(m+1)(m+2))^{2}}{6(n+m+2)(n+m+1)(n+m)},
\]
\[
C_{n,m}\int s_{(2,1,0)} = \frac{((m-1)m(m+1))^{2}}{3(n+m-1)(n+m)(n+m+1)},
\]
and
\[
C_{n,m}\int s_{(1,1,1)} = \frac{((m-2)(m-1)m)^{2}}{6(n+m-2)(n+m-1)(n+m)}.
\]
Putting this together, we have the following result.
\begin{lemma}\label{Lem:integrals}
Let $F:G_{m}(\mathbb{C}^{n+m})\rightarrow \mathbb{C}$ and write
\[
\int F = C_{n,m}\int_{\mathbb{C}^{mn}}F(W) d\mu(W),
\]
In the notation of the previous section we have
\begin{small}
\[
\int 1 = 1,
\]
\[
\int \mathrm{tr}(U^{-1}) = \frac{m^{2}}{n+m},
\]
\[
\int \mathrm{tr}(U^{-2}) = \int s_{(2,0)}-s_{(1,1)} = \frac{m^{2}(m^2 + 2nm - 1)}{(n+m-1)(n+m)(n+m+1)},
\]
\[
\int \left(\mathrm{tr}(U^{-1})\right)^{2} = \int s_{(2,0)}+s_{(1,1)} = \frac{m^{2}(m^3 + nm^2 - m + n)}{(n+m-1)(n+m)(n+m+1)},
\]
\[
\int \mathrm{tr}(U^{-3}) = \int s_{(3,0,0)}-s_{(2,1,0)}+s_{(1,1,1)} =  \frac{m^{2}(m^4 + 4m^3n + 5m^2n^2 - 5m^2 - 10mn + n^2 + 4)}{(n+m-2)(n+m-1)(n+m)(n+m+1)(n+m+2)},
\]
\[
\int \mathrm{tr}(U^{-2})tr(U^{-1}) =  \int s_{(3,0,0)}-s_{(1,1,1)} =  \frac{m^{2}(m^5 + 3m^4n + 2m^3n^2 - 5m^3 - 5m^2n + 4mn^2 + 4m - 4n)}{(n+m-2)(n+m-1)(n+m)(n+m+1)(n+m+2)},
\]
and
\[
\int \left(\mathrm{tr}(U^{-1})\right)^{3} =  \int s_{(3,0,0)}+2s_{(2,1,0)}+s_{(1,1,1)} = \frac{m^{2}(m^6 + 2m^5n + m^4n^2 - 5m^4 + 3m^2n^2 + 4m^2 - 8mn + 2n^2)}{(n+m-2)(n+m-1)(n+m)(n+m+1)(n+m+2)}.
\]
\end{small}

\end{lemma}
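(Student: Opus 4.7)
The plan is to reduce every integral on the list to a finite linear combination of the basic Selberg/Kaneko integrals $\int s_\lambda$ that have already been evaluated above the statement, and then collect terms. The single ingredient still needed is the translation between the power-sum symmetric functions
\[
p_k(z) = \sum_{i=1}^m z_i^k, \qquad p_k(z)\,p_l(z), \ldots
\]
and the Schur polynomials $s_\lambda(z)$. After that translation, every integrand is a finite $\mathbb{Z}$-linear combination of Schur polynomials, so Kaneko's formula finishes the job. The only genuine work is algebraic bookkeeping.

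First I would write $\mathrm{tr}(U^{-\beta}) = p_\beta(z)$ and $(\mathrm{tr}(U^{-1}))^\alpha = p_1(z)^\alpha$, where $z_1,\ldots,z_m$ are the eigenvalues of $(\mathbb{I}_m+W^\ast W)^{-1}$; this is exactly the content of Equations (\ref{alpha_int})--(\ref{beta_int}). Next I would expand the relevant power sums into Schurs using the standard (character-table) identities
\[
p_1 = s_{(1)}, \qquad p_2 = s_{(2)} - s_{(1,1)}, \qquad p_1^2 = s_{(2)} + s_{(1,1)},
\]
\[
p_3 = s_{(3)} - s_{(2,1)} + s_{(1,1,1)}, \quad p_1 p_2 = s_{(3)} - s_{(1,1,1)}, \quad p_1^3 = s_{(3)} + 2 s_{(2,1)} + s_{(1,1,1)};
\]
these are precisely the decompositions already recorded on the right-hand sides of the stated equations in the lemma, so no creativity is needed, just verification by Newton's identities or by pairing with the irreducible characters of $S_k$ for $k\le 3$.

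Having made these expansions, I would substitute the explicit values
\[
C_{n,m}\int s_{(0)}, \quad C_{n,m}\int s_{(1)}, \quad C_{n,m}\int s_{(2,0)}, \quad C_{n,m}\int s_{(1,1)},
\]
\[
C_{n,m}\int s_{(3,0,0)}, \quad C_{n,m}\int s_{(2,1,0)}, \quad C_{n,m}\int s_{(1,1,1)},
\]
from the displayed table preceding the lemma, and multiply through by $C_{n,m}$. Each identity then reduces to checking that a rational function with denominator $(n+m-2)(n+m-1)(n+m)(n+m+1)(n+m+2)$ (or a sub-product of these factors, in the degree-one and degree-two cases) simplifies to the rational function shown.

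The \emph{main obstacle} is entirely computational: combining terms with common factor $m^2/\bigl[(n+m-2)\cdots(n+m+2)\bigr]$ and verifying that the numerators agree with the polynomials in $m,n$ displayed on the right-hand sides. I would organise this by first pulling out the common factors, then putting each $\int s_\lambda$ over a common denominator, and finally using a computer-algebra system to verify the resulting polynomial identities in $m,n$; for instance the numerator of $\int\mathrm{tr}(U^{-3})$ should equal
\[
s_{(3,0,0)}\text{-numerator} - s_{(2,1,0)}\text{-numerator} + s_{(1,1,1)}\text{-numerator},
\]
after clearing the numerical prefactors $\tfrac{1}{6}, \tfrac{1}{3}, \tfrac{1}{6}$ and the $(m+k)$ factors from each $s_\lambda(1^m) = \prod_{1\le i<j\le m}\frac{\lambda_i - \lambda_j + j - i}{j-i}$. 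Since no step beyond polynomial algebra intervenes, the lemma follows once each of the seven numerator identities is checked.
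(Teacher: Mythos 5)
Your proposal is correct and follows essentially the same route as the paper: the lemma is obtained there by exactly this reduction — Adler--van Moerbeke to pass to the Selberg-type integral, the standard power-sum-to-Schur expansions (which are the middle expressions already displayed in the lemma), Kaneko's evaluation of $\int s_\lambda$, and a computer-algebra verification of the resulting polynomial identities. Nothing further is needed.
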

\subsection{Calculating pointwise quantities}Henceforth, all the calculations we make will be in complex coordinates unless specifically mentioned. 
We fix a choice of ${\gamma\in \sqrt{-1}\mathfrak{su}_{n+m}}$ to be 
\begin{equation}\label{Eqn:Gamma_choice}
\gamma=\mathrm{Diag}(\underbrace{-n,-n,\ldots,-n}_{m \ \mathrm{terms}},\underbrace{m,m,\ldots,m}_{n \ \mathrm{terms}}).
\end{equation}
This means
\[
M_{ij}^{\mathcal{U}} = -n\delta_{ij}+mw_{ri}\bar{w}_{rj} = -(n+m)\delta_{ij}+mU_{ij},
\]
where we have used Equation (\ref{U_eqn}). Similarly, using Equation (\ref{Q_eqn}) 
\[
M^{\mathcal{Q}}_{ij} = (n+m)\delta_{ij}-nQ_{ij}.
\]
In local coordinates $w_{I}$ we have
\begin{eqnarray}
f_{\gamma} = U^{-1}_{ij}M^{\mathcal{U}}_{ji} = m^{2}-(n+m)U^{-1}_{ii} =  m^{2}-(n+m)\tr(U^{-1}),\label{eqn:f_gam}\\
(h_{1})_{I\bar{J}} = (U^{-1}_{j_{2}i_{2}})(Q^{-1}_{i_{1}k})(Q^{-1}_{lj_{1}})M^{\mathcal{Q}}_{kl} = -ng_{I\bar{J}}+(n+m)g_{I(r,j_{2})}Q^{-1}_{rj_{1}},\label{eqn:h_1_coord}\\
(h_{2})_{I\bar{J}} = (U^{-1}_{j_{2}k})(U^{-1}_{li_{2}})M^{\mathcal{U}}_{kl}Q^{-1}_{i_{1}j_{1}} = mg_{I\bar{J}}-(n+m)g_{I(j_{1},r)}U^{-1}_{j_{2}r}\label{eqn:h_2_coord}. 
\end{eqnarray}
In the expressions for $h_{1}$ and $h_{2}$ we can see a simplifying calculational principle: 
to switch an $h_{1}$ with an $h_{2}$ in an expression, switch $m$ and $n$, multiply by $-1$, switch $U$ with the conjugate expression in $Q$.    The pointwise norms of various expressions also involve $\tr(Q^{-1})$ terms.  The following identities can be checked directly
\begin{equation}
\tr(Q^{-\alpha}) = (n-m)+\tr(U^{-\alpha}),
\end{equation}
where $\alpha \in \{1,2,3\}$. We record here the following quadratic quantities
\begin{eqnarray}
\| h_{1}\|^{2} = m\left(n^{3}-2n(n+m)\tr(Q^{-1})+(n+m)^{2}\tr(Q^{-2}) \right),\label{quad_1} \\  
\| h_{2}\|^{2} = n\left(m^{3}-2m(n+m)\tr(U^{-1})+(n+m)^{2}\tr(U^{-2})\right),\\
\|h_{3}\|^{2} = mnf_{\gamma}^{2},\\
\langle h_{1},h_{2}\rangle = -f_{\gamma}^{2},\\
\langle h_{1},h_{3}\rangle = -mf_{\gamma}^{2},\\
\langle h_{2},h_{3}\rangle = nf_{\gamma}^{2}. \label{quad_6}
\end{eqnarray}
\subsection{Integral of cubes of eigenfunctions}
As mentioned in Section \ref{Sec:2}, when ${G=SU_{n+m}}$, the space ${\mathrm{Hom}_{G}(s^{2}(\mathfrak{g}),\mathfrak{g})}$ is one-dimensional. Thus the space of $SU_{n+m}$-invariant cubic polynomials on $\mathfrak{su}_{n+m}$ is also one-dimensional and  we can pick any non-vanishing invariant polynomial as a generator; we choose (as do the authors in \cite{SchSem24}) the integral of $f_{\gamma}^{3}$ and express the obstruction (\ref{Koiobs}) as a multiple of this integral. Using the special choice $\gamma$ given in Equation (\ref{Eqn:Gamma_choice}), we have
\[
\int f_{\gamma}^{3} = \int \left(m^{6}-3m^{4}(n+m)\mathrm{tr}(U^{-1})+3m^{2}(n+m)^{2}(\tr(U^{-1}))^{2}-(n+m)^{3}(\mathrm{tr}(U^{-1}))^{3}\right).
\]
We calculate (using Matlab's symbolic algebra toolbox)
\[
\int f_{\gamma}^{3} = \frac{-2m^{2}n^{2}(n-m)^{2}}{(n+m-2)(n+m-1)(n+m+1)(n+m+2)}.
\]

 Note that this polynomial vanishes if $m=n$ but this is not a problem for the `odd' Grassmannians. This integral was calculated using different methods rooted in symplectic geometry in \cite{HMW}; a result of Kr\"oncke \cite{KlKr1} implies that if the integral does not vanish, the Fubini--Study metric on $G_{m}(\mathbb{C}^{n+m})$ is unstable as a fixed point of the Ricci flow. We note, as a sanity check, we do indeed get the same value (compare Lemma 4.6 in \cite{HMW}). Thus fact that the integral is non-zero for $m\neq n$ provides an alternative proof of the dynamic instability of the Fubini--Study metric.
\subsection{The $\langle h_{ik}h^{k}_{j},h_{ij} \rangle $ term}

We collect calculations needed to compute ${\langle h_{ik}h^{k}_{j},h_{ij}\rangle = \tr(H^{3})}$. The EIDs are constructed from combinations of the tensors
$h_{1},h_{2},$ and $h_{3}$.  We denote by 
\begin{equation} \label{EqnZ_defn}
Z_{ijk} :=  \tr(H_{i}H_{j}H_{k})
\end{equation}
where $(H_{i})_{I}^{K} = (h_{i})_{I\bar{K}}g^{J\bar{K}}$. It is immediate that $Z_{ijk} = Z_{jki}=Z_{kij}$ and the observation regarding expressions in $U$ and $Q$ means that we can switch `1' and `2' in calculations by exchanging $n$ and $m$, $U$ and $Q$, and changing a sign if needed. Hence we need only compute
\[
Z_{111}, \quad Z_{112}, \quad Z_{123},\quad Z_{133},\quad Z_{113},\quad \mathrm{and} \quad Z_{333}.  
\]
We have $H_{3} = f_{\gamma}\mathrm{Id}$; combining this with the calculation of the quadratic quantities in Equations (\ref{quad_1})-(\ref{quad_6}) yields
\begin{lemma} \label{Lem:Z_ptwse}
	Let $\gamma$ be as in Equation \ref{Eqn:Gamma_choice} and let $D_{abc}$ be defined by Equation (\ref{EqnZ_defn}). Then
	\begin{small}
	\begin{eqnarray}
	Z_{111} = m(-n^{4}+3n^{2}(n+m)\tr(Q^{-1})-3n(n+m)^{2}\tr(Q^{-2})+(n+m)^{3}\tr(Q^{-3})),\\
	Z_{112} =  (m^2-(n+m)\tr(U^{-1}))(n^{3}-2n(n+m)\tr(Q^{-1})+(n+m)^{2}\tr(Q^{-2})),\\
		Z_{333} = mnf_{\gamma}^{3},\\ 
		Z_{123} = f_{\gamma}\tr(H_{1}H_{2}) = -f^{3}_{\gamma}, \\ Z_{133}=f_{\gamma}\tr(H_{1}) = -mf_{\gamma}^{3},\\
		Z_{113} = f_{\gamma}\|h_{1}\|^{2}.
	\end{eqnarray}
\end{small}
\end{lemma}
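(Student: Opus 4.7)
The plan is to exploit the tensor-product structure of the tensors $h_1,h_2,h_3$ with respect to the bundle decomposition $TG_m(\mathbb{C}^{n+m})\cong \mathcal{U}^*\otimes\mathcal{Q}$. The coordinate formulas (\ref{eqn:h_1_coord}) and (\ref{eqn:h_2_coord}) exhibit $h_1$ as the product of the $\mathcal{U}^*$-metric with a sesquilinear form on $\mathcal{Q}$, and $h_2$ as a form on $\mathcal{U}^*$ tensored with the $\mathcal{Q}$-metric, while $h_3=f_\gamma g$ is pure scalar. Raising an index with $g^{-1}=g_1^{-1}\otimes g_2^{-1}$, the associated endomorphisms factor as
\[
H_1=\mathrm{Id}_{\mathcal{U}^*}\otimes B_1,\qquad H_2=B_2\otimes\mathrm{Id}_{\mathcal{Q}},\qquad H_3=f_\gamma\,\mathrm{Id},
\]
and a direct simplification starting from (\ref{eqn:h_1_coord})--(\ref{eqn:h_2_coord}) yields the explicit forms $B_1=(n+m)Q^{-1}-n\,\mathrm{Id}_\mathcal{Q}$ and $B_2=m\,\mathrm{Id}_{\mathcal{U}^*}-(n+m)U^{-1}$.

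Because the two tensor factors act on complementary bundles, every trace of a product $H_1^aH_2^bH_3^c$ decouples as
\[
\mathrm{tr}(H_1^aH_2^bH_3^c)=f_\gamma^c\,\mathrm{tr}(B_2^b)\,\mathrm{tr}(B_1^a),
\]
with the conventions $\mathrm{tr}(B_1^0)=n$ and $\mathrm{tr}(B_2^0)=m$. All that remains is a short binomial expansion of powers of $B_1$: one gets $\mathrm{tr}(B_1)=-n^2+(n+m)\mathrm{tr}(Q^{-1})$, $\mathrm{tr}(B_1^2)=n^3-2n(n+m)\mathrm{tr}(Q^{-1})+(n+m)^2\mathrm{tr}(Q^{-2})$, and
\[
\mathrm{tr}(B_1^3)=-n^4+3n^2(n+m)\mathrm{tr}(Q^{-1})-3n(n+m)^2\mathrm{tr}(Q^{-2})+(n+m)^3\mathrm{tr}(Q^{-3}).
\]
Multiplying the last by $m$ gives $Z_{111}$, and multiplying $\mathrm{tr}(B_1^2)$ by $\mathrm{tr}(B_2)=m^2-(n+m)\mathrm{tr}(U^{-1})=f_\gamma$ gives $Z_{112}$. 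The identity $\mathrm{tr}(Q^{-\alpha})=(n-m)+\mathrm{tr}(U^{-\alpha})$ rewrites $\mathrm{tr}(B_1)$ as $-f_\gamma$, in agreement with the trace of $H_1$ from Lemma \ref{Lem_div}.

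The formulas involving $h_3$ are immediate consequences of $H_3=f_\gamma\mathrm{Id}$: powers of $f_\gamma$ factor out and the remaining traces are either the identity trace or the quadratic quantities (\ref{quad_1})--(\ref{quad_6}). Thus $Z_{333}=f_\gamma^3\mathrm{tr}(\mathrm{Id})=mnf_\gamma^3$, $Z_{133}=f_\gamma^2\mathrm{tr}(H_1)=-mf_\gamma^3$ using Lemma \ref{Lem_div}, $Z_{113}=f_\gamma\mathrm{tr}(H_1^2)=f_\gamma\|h_1\|^2$, and $Z_{123}=f_\gamma\langle h_1,h_2\rangle=-f_\gamma^3$. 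As a built-in consistency check, the expression $m\cdot\mathrm{tr}(B_1^2)$ obtained above reproduces $\|h_1\|^2$ as recorded in (\ref{quad_1}).

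There is no real obstacle to this proof; the crucial observation is the algebraic one that the tensor-product factorisation of the $h_a$ survives the raising of an index with the product metric $g=g_1\otimes g_2$, which reduces every cubic trace $\mathrm{tr}(H_{a_1}H_{a_2}H_{a_3})$ to a product of traces on the lower-rank bundles $\mathcal{U}^*$ and $\mathcal{Q}$. Once this reduction is in place, only the single binomial expansion of $((n+m)Q^{-1}-n\mathrm{Id})^3$ requires any real calculation.
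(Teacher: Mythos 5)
Your proof is correct, and it fills in what the paper leaves almost entirely implicit: the paper's ``proof'' of this lemma is the single remark that $H_{3}=f_{\gamma}\mathrm{Id}$ together with the quadratic quantities (\ref{quad_1})--(\ref{quad_6}), plus the $U\leftrightarrow Q$ switching principle. Your explicit factorisation $H_{1}=\mathrm{Id}_{\mathcal{U}^{*}}\otimes B_{1}$, $H_{2}=B_{2}\otimes\mathrm{Id}_{\mathcal{Q}}$ with $B_{1}=(n+m)Q^{-1}-n\,\mathrm{Id}$ and $B_{2}=m\,\mathrm{Id}-(n+m)U^{-1}$ is exactly the structure underlying the coordinate formulas (\ref{eqn:h_1_coord})--(\ref{eqn:h_2_coord}), and the resulting decoupling $\tr(H_{1}^{a}H_{2}^{b}H_{3}^{c})=f_{\gamma}^{c}\tr(B_{2}^{b})\tr(B_{1}^{a})$ reproduces all six identities, including the consistency checks $\tr(B_{1})=-f_{\gamma}$ and $m\,\tr(B_{1}^{2})=\|h_{1}\|^{2}$; this is the same computation the paper intends, just organised more transparently.
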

We now integrate using Lemma \ref{Lem:integrals} and multiply by a factor of 2 to convert complex to Riemannian coordinates as in Lemma \ref{Lem:ComptoRiem} to compute the first integral in Koiso's obstruction (\ref{Koiobs}). We mention again that these calculations are carried out using the symbolic toolbox in Matlab.
\begin{lemma} \label{Lem:ZOterms}
Let $\gamma$ be as in Equation (\ref{Eqn:Gamma_choice})
and let $\mathcal{D}_{\gamma}$ be the associated EID as described in Theorem \ref{Thm:deform}. Then, in Riemannian coordinates, \\
\begin{small}
$\langle (\mathcal{D}_{\gamma})_{ik}(\mathcal{D}_{\gamma})^{k}_{j},(\mathcal{D}_{\gamma})_{ij} \rangle  = $
\[
-\left[(m^2 - 1)(n^2 - 1)(n+m)^3\frac{(m^3n^3 - 4m^3n - 4mn^3 + 2m^2n^2 + m^2 + n^2 + 7mn - 4)}{(n - m)}\right]\int f^{3}_{\gamma}.
\]
\end{small}
\end{lemma}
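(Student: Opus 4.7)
\textbf{Proof proposal for Lemma \ref{Lem:ZOterms}.}

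The plan is to expand $\tr(H^3)$ (where $H$ is the endomorphism associated to $\mathcal{D}_\gamma$) as a symmetric trilinear form in the three building blocks $h_1, h_2, h_3$, compute each pointwise contribution, then integrate. Writing $\alpha_1 = n(1-m^2)$, $\alpha_2 = m(1-n^2)$, $\alpha_3 = n^2-m^2$, we have
\[
\tr(H^3) = \sum_{a,b,c} \alpha_a\alpha_b\alpha_c\, Z_{abc},
\]
with $Z_{abc}$ as in \eqref{EqnZ_defn}. Since each $H_i$ is self-adjoint with respect to $g$, the tensor $Z_{abc}$ is fully symmetric in its indices (not merely cyclically symmetric), so this expansion reduces to the standard trinomial expansion with coefficients $1, 3, 6$ depending on the partition type of $(a,b,c)$. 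Thus I only need six independent pointwise quantities: $Z_{111}, Z_{112}, Z_{122}, Z_{222}, Z_{113}, Z_{223}, Z_{133}, Z_{233}, Z_{123}, Z_{333}$, of which six are already recorded in Lemma \ref{Lem:Z_ptwse}.

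Next I would obtain the remaining $Z_{abc}$ by exploiting the swap principle observed after \eqref{eqn:h_2_coord}: the substitution $(m,n,U,\tr(Q^{-\alpha})) \longleftrightarrow (n,m,Q,\tr(U^{-\alpha}))$ together with an overall sign flip interchanges any expression in $h_1$ with the corresponding expression in $h_2$. Applied to Lemma \ref{Lem:Z_ptwse} this yields $Z_{222}, Z_{122}, Z_{223}, Z_{233}$ for free; in particular $Z_{223}=f_\gamma\|h_2\|^2$ and $Z_{233}=nf_\gamma^3$.

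I then integrate the pointwise expansion of $\tr(H^3)$ using Lemma \ref{Lem:integrals}. Every $\tr(Q^{-\alpha})$ that appears is first converted via $\tr(Q^{-\alpha}) = (n-m) + \tr(U^{-\alpha})$, so all integrands become polynomials in the six quantities $1, \tr(U^{-1}), \tr(U^{-2}), \tr(U^{-3}), (\tr(U^{-1}))^2, \tr(U^{-1})\tr(U^{-2}), (\tr(U^{-1}))^3$ whose normalised integrals are tabulated in Lemma \ref{Lem:integrals}. Because the same linear substitutions convert $f_\gamma$ itself into a linear polynomial in $\tr(U^{-1})$ by \eqref{eqn:f_gam}, the resulting rational function of $m$ and $n$ can be compared directly with
\[
\int f_\gamma^3 \;=\; \frac{-2m^2 n^2 (n-m)^2}{(n+m-2)(n+m-1)(n+m+1)(n+m+2)}.
\]
Finally I apply Lemma \ref{Lem:ComptoRiem} to convert from complex to Riemannian coordinates (the factor of $2$ from \eqref{RiemCom_1}).

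The only serious obstacle is the algebra: after assembling the trinomial expansion with its many cross-terms and substituting the values of the $Z_{abc}$-integrals, one obtains a rational function in $m$ and $n$ of high numerator and denominator degree, and the claim is that it factors as $-(m^2-1)(n^2-1)(n+m)^3$ times a specific degree-$6$ polynomial divided by $(n-m)$, multiplied by $\int f_\gamma^3$. The appearance of $(m^2-1)(n^2-1)$ is expected ($\alpha_1,\alpha_2$ each carry such a factor), and the cancellation of one $(n-m)$ against the $(n-m)^2$ in $\int f_\gamma^3$ is forced by the denominator structure of the Selberg integrals; verifying the remaining polynomial identity is a routine but bulky symbolic computation, which the author performs in Matlab.
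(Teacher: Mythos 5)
Your proposal is correct and follows essentially the same route as the paper: expand $\tr(H^3)$ trilinearly in $h_1,h_2,h_3$ using the full symmetry of $Z_{abc}$, compute the six representatives recorded in Lemma \ref{Lem:Z_ptwse} pointwise, obtain the remaining ones by the $U\leftrightarrow Q$, $m\leftrightarrow n$ swap, integrate via the Selberg-type formulas of Lemma \ref{Lem:integrals}, and apply the factor of $2$ from Equation (\ref{RiemCom_1}). (Minor slip only: you announce ``six independent pointwise quantities'' but correctly list all ten multiset types; the argument is unaffected.)
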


\subsection{The $\langle \nabla_{i}\nabla_{j}h_{kl},h_{ij}h_{kl}\rangle$ term}
We return to making calculations in complex coordinates and note that as the EID $\mathcal{D}_{\gamma}$ is divergence-free, integration by parts yields 
\[
\langle \nabla_{I}\nabla_{\bar{J}}(\mathcal{D}_{\gamma})_{K\bar{L}}, (\mathcal{D}_{\gamma})_{I\bar{J}}(\mathcal{D}_{\gamma})_{K\bar{L}} \rangle_{L^{2}} = - \langle \nabla_{\bar{J}}(\mathcal{D}_{\gamma})_{K\bar{L}},(\mathcal{D}_{\gamma})_{I\bar{J}}\nabla^{I}(\mathcal{D}_{\gamma})_{K\bar{L}}\rangle_{L^{2}}. 
\]
Thus it will be necessary to compute terms of the form
\[
 \nabla_{\bar{J}}(h_{r})_{K\bar{L}}\nabla^{I}(h_{s})^{K\bar{L}},
\]
where $r,s\in\{1,2,3\}$.
The formulae for $h_{1}$ and $h_{2}$ in Equations (\ref{eqn:h_1_coord}) and (\ref{eqn:h_2_coord}) yield the following 
\begin{eqnarray*}
\nabla_{J}(h_{1})_{K\bar{L}} = (n+m)g_{K(r,l_{2})}\frac{\partial Q^{-1}_{rl_{1}}}{\partial w_{J}} = -(n+m)g_{J(l_{1},\alpha_{2})}g_{K(\alpha_{1},l_{2})}\overline{w}_{\alpha},\\
\nabla_{J}(h_{2})_{K\bar{L}} =-(n+m)g_{K(l_{1},r)}\frac{\partial U^{-1}_{l_{2}r}}{\partial w_{J}}=(n+m)g_{J(\alpha_{1},l_{2})}g_{K(l_{1},\alpha_{2})}\overline{w}_{\alpha},\\
\nabla_{J}(h_{3})_{K\bar{L}} = (n+m)g_{J\bar{\alpha}}g_{K\bar{L}}\overline{w}_{\alpha}.
\end{eqnarray*}
The nine possible combinations of $r$ and $s$ in
\[
\nabla_{\bar{J}}(h_{r})_{K\bar{L}}\nabla^{I}(h_{s})^{K\bar{L}}
\]
actually yield a very limited set of expressions.
\begin{lemma} \label{Lem:IBPLem1}
Let $\gamma$ be as in Equation (\ref{Eqn:Gamma_choice}) and let $h_{1}, h_{2}, $ and $h_{3}$ be as in Theorem \ref{Thm:deform}. Then
\begin{eqnarray*}
	\left(\nabla_{\bar{J}}(h_{1})_{K\bar{L}}\right)\left(\nabla^{\bar{I}}(h_{1})^{K\bar{L}} \right) =m(n+m)^{2} \delta_{i_{1}j_{1}}\left(U^{-1}_{j_{2}i_{2}}-U^{-1}_{j_{2}\beta}U^{-1}_{\beta i_{2}}\right), \\
\left(\nabla_{\bar{J}}(h_{1})_{K\bar{L}}\right)\left(\nabla^{\bar{I}}(h_{2})^{K\bar{L}} \right)=-(n+m)^{2}g_{K\bar{J}}w_{K}\overline{w}_{I},\\
\left(\nabla_{\bar{J}}(h_{1})_{K\bar{L}}\right)\left(\nabla^{\bar{I}}(h_{3})^{K\bar{L}} \right) =-m(n+m)^{2}g_{K\bar{J}}w_{K}\overline{w}_{I},\\
\left(\nabla_{\bar{J}}(h_{2})_{K\bar{L}}\right)\left(\nabla^{\bar{I}}(h_{1})^{K\bar{L}} \right)= -(n+m)^{2}g_{K\bar{J}}w_{K}\overline{w}_{I},\\
\left(\nabla_{\bar{J}}(h_{2})_{K\bar{L}}\right)\left(\nabla^{\bar{I}}(h_{2})^{K\bar{L}} \right) =n(n+m)^{2}\delta_{i_{2}j_{2}}\left(Q^{-1}_{i_{1}j_{1}}-Q^{-1}_{i_{1}\alpha}Q^{-1}_{\alpha j_{1}} \right),\\
\left(\nabla_{\bar{J}}(h_{2})_{K\bar{L}}\right)\left(\nabla^{\bar{I}}(h_{3})^{K\bar{L}} \right) =n(n+m)^{2}g_{K\bar{J}}w_{K}\overline{w}_{I},\\
\left(\nabla_{\bar{J}}(h_{3})_{K\bar{L}}\right)\left(\nabla^{\bar{I}}(h_{1})^{K\bar{L}} \right) =-m(n+m)^{2}g_{K\bar{J}}w_{K}\overline{w}_{I},\\
\left(\nabla_{\bar{J}}(h_{3})_{K\bar{L}}\right)\left(\nabla^{\bar{I}}(h_{2})^{K\bar{L}} \right) =n(n+m)^{2}g_{K\bar{J}}w_{K}\overline{w}_{I},\\
\left(\nabla_{\bar{J}}(h_{3})_{K\bar{L}}\right)\left(\nabla^{\bar{I}}(h_{3})^{K\bar{L}} \right) =(n+m)^{2}mn g_{K\bar{J}}w_{K}w_{\bar{I}}.
\end{eqnarray*}
\end{lemma}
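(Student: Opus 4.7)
The proof is a direct calculation: we plug the three formulas for $\nabla_{J}(h_{r})_{K\bar L}$ displayed immediately before the lemma (together with their complex conjugates, which supply the antiholomorphic derivatives) into each of the nine bilinear expressions, then raise and contract. First, since each $h_{r}$ is Hermitian we have $\nabla_{\bar J}(h_{r})_{K\bar L} = \overline{\nabla_{J}(h_{r})_{L\bar K}}$, so the antiholomorphic versions are obtained by swapping $w \leftrightarrow \bar w$ and exchanging the two factors in each product $U^{-1}\cdot Q^{-1}$. The contravariant objects $\nabla^{\bar I}(h_{s})^{K\bar L}$ are then obtained by raising with the inverse Fubini--Study metric $g^{I\bar J} = U_{i_{2}j_{2}}Q_{j_{1}i_{1}}$. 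Each of the nine products reduces to a finite contraction of $U, U^{-1}, Q, Q^{-1}, w, \bar w$ carried by the split indices $I=(i_{1},i_{2})$, $K=(k_{1},k_{2})$, $L=(l_{1},l_{2})$.

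The nine cases fall into three groups handled in parallel. In the diagonal cases $(r,s)=(1,1)$ and $(2,2)$, the tensor-product factor in $\nabla_{J}h_{1}$ (respectively $\nabla_{J}h_{2}$) pairs with $g$ so that one split-index component (the $i_{1},j_{1}$ slot for $(1,1)$, the $i_{2},j_{2}$ slot for $(2,2)$) decouples as a Kronecker $\delta$, while the remaining factors combine via the zeroth-order identities $U_{ij} = \delta_{ij} + w_{ri}\bar w_{rj}$ and $Q_{ij} = \delta_{ij} + \bar w_{ir}w_{jr}$ (equations \eqref{U_eqn}, \eqref{Q_eqn}) to produce exactly the stated expressions $U^{-1}_{j_{2}i_{2}} - U^{-1}_{j_{2}\beta}U^{-1}_{\beta i_{2}}$ and its $Q$-analogue. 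In the six cases involving $h_{3}$, the formula $\nabla_{J}(h_{3})_{K\bar L} = (n+m)g_{J\bar\alpha}g_{K\bar L}\bar w_{\alpha}$ is fully diagonal in the $K\bar L$ slot, so contracting against any $\nabla^{\bar I}(h_{s})^{K\bar L}$ collapses the $K,L$ sum to $\mathrm{tr}(H_{s})$ times the rank-one expression $g_{K\bar J}w_{K}\bar w_{I}$; the three trace values $-m f_{\gamma}$, $n f_{\gamma}$, $m n f_{\gamma}$ are immediate from Lemma \ref{Lem_div}. In the two mixed cases $(1,2)$ and $(2,1)$, the naive contraction produces a hybrid sum of $U^{-1}$- and $Q^{-1}$-factors; the shift identity \eqref{shift_eqn}, $(Q^{-1}_{ir})\bar w_{rj} = (U^{-1}_{rj})\bar w_{ir}$, is exactly what consolidates these hybrid sums into the common rank-one form $g_{K\bar J}w_{K}\bar w_{I}$.

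The expected obstacle is not conceptual but bookkeeping: the split-index convention and the raising by $g^{I\bar J} = U_{i_{2}j_{2}}Q_{j_{1}i_{1}}$ scatter $U^{\pm 1}$- and $Q^{\pm 1}$-factors throughout the intermediate expressions, and one must apply \eqref{shift_eqn} at exactly the right moment to recognise the rank-one form. A useful sanity check is the $h_{1}\leftrightarrow h_{2}$ symmetry already noted in the paper (simultaneously swap $m\leftrightarrow n$, exchange $U$ with the conjugate expression in $Q$, and change sign): this relates $(1,1)$ to $(2,2)$, $(1,3)$ to $(2,3)$, $(3,1)$ to $(3,2)$, and $(1,2)$ to $(2,1)$, reducing the count of genuinely independent verifications to five.
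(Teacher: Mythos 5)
Your proposal is correct and coincides with the paper's (implicit) argument: the paper gives no written proof of this lemma, asserting it as a direct contraction of the displayed formulas for $\nabla_{J}(h_{r})_{K\bar{L}}$ against their raised conjugates, using the zeroth-order identities and the shift identity (\ref{shift_eqn}) exactly as you describe, together with the $h_{1}\leftrightarrow h_{2}$ symmetry to cut down the case count. One small correction to your second paragraph: in the $h_{3}$ cases the $K,\bar{L}$ contraction produces $g_{K\bar{L}}\nabla^{\bar{I}}(h_{s})^{K\bar{L}}=\nabla^{\bar{I}}\mathrm{tr}(H_{s})$, so what survives are the constants $-m$, $n$, $mn$ from $\mathrm{tr}(H_{s})=c_{s}f_{\gamma}$ multiplying $\nabla^{\bar{I}}f_{\gamma}=(n+m)\overline{w}_{I}$ --- reading ``$\mathrm{tr}(H_{s})$ times the rank-one expression'' literally would introduce a spurious factor of $f_{\gamma}$ not present in the stated formulas.
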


We now consider the pairing of each of these terms with the endomorphisms $(H_{i})$.

\begin{lemma} \label{Lem:Pairing}
Let $\gamma$ be as in Equation (\ref{Eqn:Gamma_choice}) and, for ${a\in \{1,2,3\}}$, let $H_{a}$ be the endomorphisms associated to the $h_{a}$. Then
\begin{small}
\begin{eqnarray*}
 \delta_{i_{1}j_{1}}\left(U^{-1}_{j_{2}i_{2}}-U^{1}_{j_{2}\beta}U^{-1}_{\beta i_{2}}\right)(H_{1})^{\bar{J}}_{\bar{I}}  = -f_{\gamma}\left(\tr(U^{-1})-\tr(U^{-2})\right),\\
\delta_{i_{1}j_{1}}\left(U^{-1}_{j_{2}i_{2}}-U^{1}_{j_{2}\beta}U^{-1}_{\beta i_{2}}\right)(H_{2})^{\bar{J}}_{\bar{I}}  = n\left(m\left(\tr(U^{-1})-\tr(U^{-2})\right)-(n+m)\left(\tr(U^{-2})-\tr(U^{-3})\right)\right),\\
  \delta_{i_{1}j_{1}}\left(U^{-1}_{j_{2}i_{2}}-U^{1}_{j_{2}\beta}U^{-1}_{\beta i_{2}}\right)(H_{3})^{\bar{J}}_{\bar{I}} = nf_{\gamma}\left(\tr(U^{-1})-\tr(U^{-2})\right),\\ 
\delta_{i_{2}j_{2}}\left(Q^{-1}_{i_{1}j_{1}}-Q^{-1}_{i_{1}\alpha}Q^{-1}_{\alpha j_{1}} \right)(H_{1})^{\bar{J}}_{\bar{I}}  = f_{\gamma}\left(\tr(Q^{-1})-\tr(Q^{-2})\right),\\
\delta_{i_{2}j_{2}}\left(Q^{-1}_{i_{1}j_{1}}-Q^{-1}_{i_{1}\alpha}Q^{-1}_{\alpha j_{1}} \right)(H_{2})^{\bar{J}}_{\bar{I}}  = -m\left(n\left(\tr(Q^{-1})-\tr(Q^{-2})\right)-(n+m)\left(\tr(Q^{-2})-\tr(Q^{-3})\right)\right),\\
\delta_{i_{2}j_{2}}\left(Q^{-1}_{i_{1}j_{1}}-Q^{-1}_{i_{1}\alpha}Q^{-1}_{\alpha j_{1}} \right)(H_{3})^{\bar{J}}_{\bar{I}}  = mf_{\gamma}\left(\tr(Q^{-1})-\tr(Q^{-2}) \right),\\
g_{K\bar{J}}w_{K}\overline{w}_{I}(H_{1})^{\bar{J}}_{\bar{I}} =-n\tr(Q^{-1})+n\tr(Q^{-2})+(n+m)\tr(Q^{-2})-(n+m)\tr(Q^{-3}),\\
g_{K\bar{J}}w_{K}\overline{w}_{I}(H_{2})^{\bar{J}}_{\bar{I}} = m\tr(U^{-1})-m\tr(U^{-2})-(n+m)\tr(U^{-2})+(n+m)\tr(U^{-3}),\\
g_{K\bar{J}}w_{K}\overline{w}_{I}(H_{3})^{\bar{J}}_{\bar{I}} = f_{\gamma}\left(\tr(U^{-1})-\tr(U^{-2})\right).
\end{eqnarray*} 
\end{small}
\end{lemma}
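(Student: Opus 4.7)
My plan is a direct coordinate calculation exploiting the explicit formulas (\ref{eqn:h_1_coord}), (\ref{eqn:h_2_coord}), and $h_3 = f_\gamma g$. The first step is to compute the endomorphisms $(H_a)^{\bar{J}}_{\bar{I}} = g^{K\bar{J}}(h_a)_{K\bar{I}}$ in the split-index notation. Using $g^{K\bar{J}} = U_{k_2 j_2}Q_{j_1 k_1}$ and cancelling against the $U^{-1}$-or-$Q^{-1}$ factors in each $h_a$, a short manipulation yields
\begin{align*}
(H_1)^{\bar{J}}_{\bar{I}} &= -n\,\delta^{\bar{J}}_{\bar{I}} + (n+m)\,\delta_{i_2 j_2}\,Q^{-1}_{j_1 i_1},\\
(H_2)^{\bar{J}}_{\bar{I}} &= m\,\delta^{\bar{J}}_{\bar{I}} - (n+m)\,\delta_{i_1 j_1}\,U^{-1}_{i_2 j_2},\\
(H_3)^{\bar{J}}_{\bar{I}} &= f_\gamma\,\delta^{\bar{J}}_{\bar{I}},
\end{align*}
where $\delta^{\bar{J}}_{\bar{I}} := \delta_{i_1 j_1}\delta_{i_2 j_2}$. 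Each $H_a$ therefore splits as a scalar multiple of the identity plus a `one-factor' block involving only $U^{-1}$ or only $Q^{-1}$, with the complementary factor collapsed to a Kronecker delta.

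With these in hand the nine pairings are essentially mechanical. For each of the three tensor shapes appearing in Lemma \ref{Lem:IBPLem1} --- the $U$-block $\mathcal{T}^{U}$, the $Q$-block $\mathcal{T}^{Q}$, and the rank-one block $\mathcal{R} = g_{K\bar{J}}w_K\bar{w}_I$ --- I substitute the above expression for $(H_a)$ and carry out the double sum over $(i_1,i_2,j_1,j_2)$. The pairings with $H_3$ reduce immediately to $f_\gamma$ times a trace of the relevant tensor shape. For $H_1$ and $H_2$ the two pieces contribute separately: as an illustrative case, pairing $\mathcal{T}^{U}$ with $H_1$ gives $-n^{2}(\tr U^{-1} - \tr U^{-2})$ from the scalar piece (the $\delta_{i_1 j_1}$ sum produces a factor of $n$) and $(n+m)(\tr Q^{-1})(\tr U^{-1} - \tr U^{-2})$ from the $Q^{-1}$ piece; these combine via the dual expression $f_\gamma = n^{2} - (n+m)\tr Q^{-1}$ to yield $-f_\gamma(\tr U^{-1} - \tr U^{-2})$, as claimed. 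All $\mathcal{T}^{Q}$ pairings are then obtained from the $\mathcal{T}^{U}$ ones via the $1\leftrightarrow 2$ symmetry principle ($U\leftrightarrow Q$, $m\leftrightarrow n$, sign flip) noted in the excerpt.

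The rank-one pairings $\mathcal{R}\cdot(H_a)$ look slightly more delicate because of the coordinate factors $w_K, \bar{w}_I$, but they reduce using the zeroth-order identities (\ref{U_eqn})--(\ref{shift_eqn}) together with the Woodbury-type matrix identity $W^{*}Q^{-1}W = \mathbb{I}_m - U^{-1}$ (a consequence of $U = \mathbb{I}_m + W^{*}W$ and $Q = \mathbb{I}_n + WW^{*}$), and its $U\leftrightarrow Q$ dual $WU^{-1}W^{*} = \mathbb{I}_n - Q^{-1}$. These turn all $w$-$\bar{w}$ products into trace polynomials in $U^{-1}$ and $Q^{-1}$. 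The main obstacle is purely organisational: nine similar contractions, each producing a handful of trace terms that must be collected correctly. The 1-2 symmetry halves the work, the $H_3$ rows are almost trivial, and the scalar-plus-block structure of $(H_1),(H_2)$ means only a handful of genuine computations need to be carried out; the remainder is bookkeeping.
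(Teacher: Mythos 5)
Your approach is correct and is essentially the computation the paper intends (the lemma is stated without proof as a direct coordinate calculation): your explicit formulas $(H_1)^{\bar J}_{\bar I}=-n\delta^{\bar J}_{\bar I}+(n+m)\delta_{i_2j_2}Q^{-1}_{j_1i_1}$, $(H_2)^{\bar J}_{\bar I}=m\delta^{\bar J}_{\bar I}-(n+m)\delta_{i_1j_1}U^{-1}_{i_2j_2}$, $(H_3)=f_\gamma\,\mathrm{Id}$ are right, your illustrative contraction using $f_\gamma=n^2-(n+m)\tr(Q^{-1})=m^2-(n+m)\tr(U^{-1})$ checks out, and the Woodbury-type identities are exactly what is needed to reduce the rank-one block $g_{K\bar J}w_K\bar w_I$ to trace polynomials.

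One substantive caveat: if you actually carry out the $1\leftrightarrow 2$ symmetry step (or the direct contraction), you will find that the $Q$-block pairs with $H_1$ to give
\[
-m\left(n\left(\tr(Q^{-1})-\tr(Q^{-2})\right)-(n+m)\left(\tr(Q^{-2})-\tr(Q^{-3})\right)\right),
\]
and with $H_2$ to give $f_\gamma\left(\tr(Q^{-1})-\tr(Q^{-2})\right)$ --- i.e.\ the opposite assignment to the fourth and fifth displayed lines of the statement. (Indeed, the symmetry sends the pairing $\mathcal{T}^U\cdot H_1$ to $\mathcal{T}^Q\cdot H_2$, not to $\mathcal{T}^Q\cdot H_1$.) This is a transcription slip in the statement rather than an error in your method: the values actually used downstream in Lemma \ref{Lem:D_ptwse} (compare $D_{212}$ versus $D_{222}$) agree with the computation just described, not with the lines as printed. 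So your plan will succeed, but you should expect the literal verification of those two rows to fail and should record the corrected assignment; as written, your proposal asserts that the symmetry reproduces the printed statement, which it does not.
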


Let 
\begin{equation}\label{Eqn:KoisoD1}
D_{abc} : = \langle (\nabla_{\bar{J}}h_{a})_{K\bar{L}},(H_{b})^{\bar{I}}_{\bar{J}}(\nabla_{\bar{I}}h_{c})_{K\bar{L}}\rangle,
\end{equation}
where $a,b,c, \in \{1,2,3\}$. Then, putting together the results of Lemma \ref{Lem:IBPLem1} and Lemma \ref{Lem:Pairing}, we have the expressions we need.
\begin{lemma} \label{Lem:D_ptwse}
Let $\gamma$ be as in Equation \ref{Eqn:Gamma_choice} and let $D_{abc}$ be defined by Equation (\ref{Eqn:KoisoD1}). Then
\begin{eqnarray*}
D_{111} = -m(n+m)^{2}f_{\gamma}\left(\tr(U^{-1})-\tr(U^{-2})\right),\\
D_{121} = nm(n+m)^{2}\left(m\tr(U^{-1})-m\tr(U^{-2})-(n+m)\tr(U^{-2})+(n+m)\tr(U^{-3})\right),\\
D_{131} = mn(n+m)^{2}f_{\gamma}\left(\tr(U^{-1})-\tr(U^{-2})\right),\\ 
\\
D_{112} = -(n+m)^{2}\left(-n\tr(Q^{-1})+n\tr(Q^{-2})+(n+m)\tr(Q^{-2})-(n+m)\tr(Q^{-3}) \right),\\
D_{122} = -(n+m)^{2}\left(m\tr(U^{-1})-m\tr(U^{-2})-(n+m)\tr(U^{-2})+(n+m)\tr(U^{-3}) \right),\\
D_{132} = -(n+m)^{2}f_{\gamma}\left(\tr(U^{-1})-\tr(U^{-2}) \right)\\
\\
D_{1i3} = mD_{1i2} \quad \mathrm{for} \quad i\in\{1,2,3\},\\
\\
D_{2i1} = D_{1i2} \quad \mathrm{for} \quad i\in\{1,2,3\},\\
\\
D_{212} = -nm(n+m)^{2}\left(n\tr(Q^{-1})-n\tr(Q^{-2})-(n+m)\tr(Q^{-2})+(n+m)\tr(Q^{-3})\right),\\
D_{222} = n(n+m)^{2}f_{\gamma}\left(\tr(Q^{-1})-\tr(Q^{-2})\right),\\
D_{232} = mn(n+m)^{2}f_{\gamma}\left(\tr(Q^{-1})-\tr(Q^{-2})\right),\\ 
\\
D_{2i3} = -nD_{1i2}  \quad \mathrm{for} \quad i\in\{1,2,3\},\\
\\
D_{3i1} = D_{1i3} \quad \mathrm{for} \quad i\in\{1,2,3\},\\
\\
D_{3i2} = D_{2i3} \quad \mathrm{for} \quad i\in\{1,2,3\},\\
\\
D_{3i3} = -mnD_{1i2} \quad \mathrm{for} \quad i\in\{1,2,3\}.\\
\end{eqnarray*}
\end{lemma}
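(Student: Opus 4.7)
The plan is to assemble each $D_{abc}$ directly from the two preceding lemmas: Lemma \ref{Lem:IBPLem1}, which computes the nine gradient pairings ${\nabla_{\bar J}(h_a)_{K\bar L}\nabla^{\bar I}(h_c)^{K\bar L}}$, and Lemma \ref{Lem:Pairing}, which computes the contractions of the three tensor types appearing in those pairings against each endomorphism $H_b$. Naively one would face $27$ quantities, but a structural observation cuts this down sharply.

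The observation is that Lemma \ref{Lem:IBPLem1} produces only three genuinely distinct tensor shapes: the "$U$-tensor" $\delta_{i_1j_1}(U^{-1}_{j_2i_2} - U^{-1}_{j_2\beta}U^{-1}_{\beta i_2})$, occurring only for $(a,c)=(1,1)$; its "$Q$"-analogue, occurring only for $(a,c)=(2,2)$; and scalar multiples of $g_{K\bar J}w_K\bar w_I$ in the remaining seven combinations. Reading off the prefactors in Lemma \ref{Lem:IBPLem1}, the six "mixed" pairings that involve $h_3$ are multiples of the base case $(a,c)=(1,2)$ with ratios $1, m, m, -n, -n, -mn$ corresponding to $(2,1), (1,3), (3,1), (2,3), (3,2), (3,3)$ respectively. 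These scalar ratios commute past the subsequent $H_b$ contraction, immediately yielding all the symmetry identities $D_{2i1} = D_{1i2}$, $D_{1i3} = D_{3i1} = mD_{1i2}$, $D_{2i3} = D_{3i2} = -nD_{1i2}$, and $D_{3i3} = -mnD_{1i2}$ for each $i\in\{1,2,3\}$.

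To finish, one verifies the nine independent identities $D_{1i1}, D_{2i2}, D_{1i2}$ for $i\in\{1,2,3\}$ by direct substitution. For instance, $D_{111}$ is the product of the prefactor $m(n+m)^2$ from the first line of Lemma \ref{Lem:IBPLem1} with the value $-f_\gamma(\tr(U^{-1}) - \tr(U^{-2}))$ from the first line of Lemma \ref{Lem:Pairing}, which reproduces the stated formula. The $D_{2i2}$ family is handled identically under the swap $m\leftrightarrow n$, $U\leftrightarrow Q$, and the $D_{1i2}$ family combines the prefactor $-(n+m)^2$ with the last three contractions of Lemma \ref{Lem:Pairing}, producing the three mixed scalar expressions.

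The main obstacle is bookkeeping rather than substance: no new geometric or analytic input is required, but one must be scrupulous about signs, index placement, and the choice between $\tr(U^{-\alpha})$ and $\tr(Q^{-\alpha})$ (the two being related by $\tr(Q^{-\alpha}) = (n-m) + \tr(U^{-\alpha})$ if one needs to harmonise expressions). Because each individual case is a single algebraic substitution, the proof reduces to a finite mechanical check assembling the two upstream lemmas.
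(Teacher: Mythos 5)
Your proposal is correct and matches the paper's (implicit) proof exactly: Lemma \ref{Lem:D_ptwse} is obtained by mechanically pairing each prefactor from Lemma \ref{Lem:IBPLem1} with the corresponding contraction from Lemma \ref{Lem:Pairing}, and the scalar ratios $1,m,m,-n,-n,-mn$ of the $g_{K\bar{J}}w_{K}\overline{w}_{I}$ terms give all the proportionality identities just as you describe. One caveat worth flagging: a literal substitution of the fourth and fifth rows of Lemma \ref{Lem:Pairing} as printed would interchange $D_{212}$ and $D_{222}$ --- those two rows appear to be transposed, since they violate the paper's own $m\leftrightarrow n$, $U\leftrightarrow Q$, sign-flip symmetry --- so your choice to derive the $D_{2i2}$ family via that symmetry rather than by direct substitution is what lands on the values actually stated in the lemma.
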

Finally in this subsection we again can use Lemma \ref{Lem:integrals} to compute the $L^{2}$-inner product. Note that there is a factor of $4$ in switching from complex to Riemannian coordinates (Equation \ref{RiemCom_2}).
\begin{lemma}
Let $\mathcal{D}_{\gamma}$ be the EID as described in Theorem \ref{Thm:deform}. Then, in Riemannian coordinates, \\
$\langle \nabla_{i}\nabla_{j}(\mathcal{D}_{\gamma})_{kl},(\mathcal{D}_{\gamma})_{ij}(\mathcal{D}_{\gamma})_{kl}\rangle = $
\begin{small}
\[
 \left[\dfrac{2mn(m^{2}-1)(n^{2}-1)(m+n)^{2}(m^4n^2 + m^2n^4 - 2m^4 - 2n^4 - 4m^2n^2 + 5m^2 + 5n^2 - 4)}{n-m} \right]\int f_{\gamma}^{3}.
\]
\end{small}
\end{lemma}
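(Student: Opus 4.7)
The plan is to exploit the fact that $\mathcal{D}_{\gamma}$ is a trilinear expression in the three building blocks $h_{1},h_{2},h_{3}$ so that the integrand decomposes cleanly into the pointwise quantities $D_{abc}$ already assembled in Lemma \ref{Lem:D_ptwse}. Writing $\alpha_{1}=n(1-m^{2})$, $\alpha_{2}=m(1-n^{2})$, $\alpha_{3}=n^{2}-m^{2}$, so that $\mathcal{D}_{\gamma}=\sum_{a}\alpha_{a}h_{a}$, and using the divergence-free property of $\mathcal{D}_{\gamma}$ together with the integration-by-parts identity stated just before Lemma \ref{Lem:D_ptwse}, the complex version of the target integral becomes
\[
\langle \nabla_{I}\nabla_{\bar{J}}(\mathcal{D}_{\gamma})_{K\bar{L}}, (\mathcal{D}_{\gamma})_{I\bar{J}}(\mathcal{D}_{\gamma})_{K\bar{L}}\rangle_{L^{2}} = -\sum_{a,b,c=1}^{3}\alpha_{a}\alpha_{b}\alpha_{c}\int D_{abc}.
\]
All twenty-seven triples $(a,b,c)$ are known pointwise from Lemma \ref{Lem:D_ptwse}; many collapse under the substitution rules recorded there (e.g.\ $D_{1i3}=mD_{1i2}$, $D_{2i1}=D_{1i2}$, $D_{3i3}=-mnD_{1i2}$, etc.), so in practice one only needs a short independent list.

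Next, I would substitute the explicit expressions for $f_{\gamma}$, $\tr(U^{-1})$, $\tr(U^{-2})$, $\tr(U^{-3})$ from Equation (\ref{eqn:f_gam}) and the identity $\tr(Q^{-\alpha})=(n-m)+\tr(U^{-\alpha})$, so that every $D_{abc}$ is expressed as a polynomial in the three trace quantities $\tr(U^{-\alpha})$ with coefficients depending only on $n$ and $m$. The integrals of these traces, and of all their products that appear at degree at most three, are catalogued in Lemma \ref{Lem:integrals}; the highest-weight combinations needed are $\int \tr(U^{-3})$, $\int \tr(U^{-2})\tr(U^{-1})$, and $\int(\tr(U^{-1}))^{3}$, all of which have common denominator $(n+m-2)(n+m-1)(n+m)(n+m+1)(n+m+2)$, the same denominator as $\int f_{\gamma}^{3}$. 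This ensures that after collecting terms, the answer is a rational multiple of $\int f_{\gamma}^{3}$ with polynomial numerator in $m,n$, matching the form of the stated formula.

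The final bookkeeping step is to multiply by the factor of $4$ from Equation (\ref{RiemCom_2}) to convert the complex pairing to the Riemannian one, together with the overall minus sign from the integration by parts. I would carry out the resulting polynomial simplification with a computer algebra system, as the authors indicate, since the degrees of the numerators in $m,n$ reach approximately $6$ and human simplification is error-prone. The main obstacle is not conceptual but combinatorial: organising the twenty-seven $D_{abc}$ contributions, clearing the $(n-m)$ factors that appear via $\tr(Q^{-\alpha})-\tr(U^{-\alpha})$ against the eventual $(n-m)^{-1}$ in the final expression, and verifying that the numerator polynomial factors as $(m^{2}-1)(n^{2}-1)(m+n)^{2}\bigl(m^{4}n^{2}+m^{2}n^{4}-2m^{4}-2n^{4}-4m^{2}n^{2}+5m^{2}+5n^{2}-4\bigr)$ as claimed. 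A useful sanity check along the way is that every individual $D_{abc}$ integrates to an expression sharing the common denominator above, and that the coefficient of $\int f_{\gamma}^{3}$ vanishes identically when $m=n$ only through the overall $(n-m)$-structure rather than degeneracy of the integrand, consistent with the known non-rigid behaviour at $m=n$.
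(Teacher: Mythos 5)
Your proposal follows exactly the paper's route: integrate by parts using the divergence-free property, expand $\mathcal{D}_{\gamma}=\sum_{a}\alpha_{a}h_{a}$ to reduce the integrand to the pointwise quantities $D_{abc}$ of Lemma \ref{Lem:D_ptwse}, integrate the resulting trace polynomials via Lemma \ref{Lem:integrals}, and apply the factor of $4$ from Equation (\ref{RiemCom_2}) with the sign from integration by parts, finishing the polynomial simplification by computer algebra. This is the same argument the paper intends, so no further comment is needed.
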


\subsection{The $\langle \nabla_{i}\nabla_{j}h_{kl},h_{il}h_{kj}\rangle$ term}
As with the previous term, we return to calculating in complex coordinates and we integrate by parts 
\[
\langle \nabla_{I}\nabla_{\bar{J}}(\mathcal{D}_{\gamma})_{K\bar{L}},(\mathcal{D}_{\gamma})_{I\bar{L}}(\mathcal{D}_{\gamma})_{K\bar{J}}\rangle_{L^{2}}= -\langle \nabla_{\bar{J}}(\mathcal{D}_{\gamma})_{K\bar{L}},(\mathcal{D}_{\gamma})_{I\bar{L}}\nabla^{I}(\mathcal{D}_{\gamma})_{K\bar{J}}\rangle_{L^{2}}.
\]
Thus we will need to compute terms of the form ${(\nabla_{\bar{J}}(h_{r})_{K\bar{L}})(\nabla^{\bar{I}}(h_{s})^{K\bar{J}})}$, where ${r,s \in \{1,2,3\}}$.
\begin{lemma} \label{Lem:IBPLem2}
Let $\gamma$ be as in Lemma \ref{Lem:ZOterms} and let $h_{1}, h_{2}, $ and $h_{3}$ be as in Theorem \ref{Thm:deform}. Then
\begin{eqnarray*}
(\nabla_{\bar{J}}(h_{1})_{K\bar{L}})(\nabla^{\bar{I}}(h_{1})^{K\bar{J}}) =(n+m)^{2}g_{K\bar{L}}w_{K}\overline{w}_{I},\\
(\nabla_{\bar{J}}(h_{1})_{K\bar{L}})(\nabla^{\bar{I}}(h_{2})^{K\bar{J}}) =-n(n+m)^{2}\delta_{i_{2}l_{2}}\left(Q^{-1}_{i_{1}l_{1}}-Q^{-1}_{i_{1}\alpha}Q^{-1}_{\alpha l_{1}} \right),\\
(\nabla_{\bar{J}}(h_{1})_{K\bar{L}})(\nabla^{\bar{I}}(h_{3})^{K\bar{J}}) =-n(n+m)^{2}g_{K\bar{L}}w_{K}\overline{w}_{I},\\
(\nabla_{\bar{J}}(h_{2})_{K\bar{L}})(\nabla^{\bar{I}}(h_{1})^{K\bar{J}}) =-m(n+m)^{2}\delta_{i_{1}l_{1}}\left(U^{-1}_{l_{2}i_{2}}-U^{-1}_{l_{2}\alpha}U^{-1}_{\alpha i_{2}} \right),\\
(\nabla_{\bar{J}}(h_{2})_{K\bar{L}})(\nabla^{\bar{I}}(h_{2})^{K\bar{J}})  = (n+m)^{2}g_{K\bar{L}}w_{K}\overline{w}_{I},\\
(\nabla_{\bar{J}}(h_{2})_{K\bar{L}})(\nabla^{\bar{I}}(h_{3})^{K\bar{J}})  = m(n+m)^{2}g_{K\bar{L}}w_{K}\overline{w}_{I},\\
 (\nabla_{\bar{J}}(h_{3})_{K\bar{L}})(\nabla^{\bar{I}}(h_{1})^{K\bar{J}}) = -(n+m)^{2}\delta_{i_{1}l_{1}}\left(U^{-1}_{l_{2}i_{2}}-U^{-1}_{l_{2}\alpha}U^{-1}_{\alpha i_{2}} \right),\\
(\nabla_{\bar{J}}(h_{3})_{K\bar{L}})(\nabla^{\bar{I}}(h_{2})^{K\bar{J}}) = (n+m)^{2}\delta_{i_{2}l_{2}}\left(Q^{-1}_{i_{1}l_{1}}-Q^{-1}_{i_{1}\alpha}Q^{-1}_{\alpha l_{1}} \right),\\
(\nabla_{\bar{J}}(h_{3})_{K\bar{L}})(\nabla^{\bar{I}}(h_{3})^{K\bar{J}}) = (n+m)^{2}g_{K\bar{L}}w_{K}\overline{w}_{I}.
\end{eqnarray*}
\end{lemma}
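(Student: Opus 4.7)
The plan is a direct computation in local complex coordinates that exactly parallels Lemma \ref{Lem:IBPLem1}; only the contraction pattern changes. Starting from the three explicit formulas for $\nabla_J(h_r)_{K\bar{L}}$ given immediately before Lemma \ref{Lem:IBPLem1} (and their $\nabla_{\bar{J}}$-analogues obtained in the same way from Lemma \ref{U_Q_norm}), I would raise the indices $K$ and $\bar{J}$ on the second factor with $g^{K\bar{K}'}g^{J\bar{J}'}$ and then contract $K$ and $\bar{J}$ against the matching slots of the first factor, leaving free indices $\bar{L}$ (lower) and $\bar{I}$ (upper).

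The nine cases reduce dramatically by two simplifications. Since $h_3 = f_\gamma g$ is a scalar multiple of the metric, $\nabla h_3 = (\partial f_\gamma)\otimes g$; this short-circuits the four cases in which at least one index is $3$, producing a factor of $\partial f_\gamma$ whose coordinate expression is known together with elementary metric contractions. Second, the $h_1 \leftrightarrow h_2$ switching principle already used in the text (exchange $n \leftrightarrow m$, $U \leftrightarrow Q$, and flip a sign) reduces the remaining four cases to two genuine computations, say $(r,s)=(1,1)$ and $(r,s)=(1,2)$. For those, one expands $g_{I\bar{J}} = U^{-1}_{j_2 i_2}Q^{-1}_{i_1 j_1}$ and uses the shift identity (\ref{shift_eqn}) to trade $Q^{-1}\overline{w}$ and $U^{-1}\overline{w}$ terms; the two $\overline{w}$ factors coming from $\nabla h_r$ and $\nabla h_s$ then collapse against three intermediate copies of $g^{-1}$ to produce either a partial-trace expression of the form $\delta_{i_s l_s}(\mathrm{I}-\mathrm{I}^2)$ or the product structure $g_{K\bar{L}}w_K\overline{w}_I$, the dichotomy being determined by how the $\mathcal{U}$- and $\mathcal{Q}$-components of the intermediate $g^{-1}$'s match up through the $K$ contraction.

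The main obstacle is the bookkeeping of the four-way index decomposition $I=(i_1,i_2)$: every appearance of $g$ or of $\nabla h_r$ distributes non-trivially across the $\mathcal{U}^{\ast}$ and $\mathcal{Q}$ factors, and the order in which the resulting Kronecker deltas are contracted determines which of the two output shapes is obtained. Comparison with Lemma \ref{Lem:IBPLem1} is the organising principle: the present contractions differ from those only by transposing which index slot is raised on the second factor, so many intermediate expressions coincide up to relabelling, and the relative sign pattern between the nine cases follows from the same $h_1 \leftrightarrow h_2$ dictionary exhibited there.
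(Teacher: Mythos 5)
Your plan coincides with the paper's: the lemma is presented there as a direct coordinate computation from the displayed formulas for $\nabla_{J}(h_{r})_{K\bar{L}}$, with the contraction now taken over $K$ and $\bar{J}$ instead of $K$ and $\bar{L}$, and with the $h_{1}\leftrightarrow h_{2}$ switching principle together with the simple form $\nabla h_{3}=(\partial f_{\gamma})\otimes g$ cutting the nine cases down to two genuine computations exactly as you describe. (Only a trivial quibble: five, not four, of the nine cases involve an index equal to $3$.)
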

We can then use Lemma \ref{Lem:Pairing} to complete the calculation. Let
\begin{equation}\label{Eqn:KoisoD2}
S_{abc} = \langle (\nabla_{\bar{J}}h_{a})_{K\bar{L}},(H_{b})^{\bar{I}}_{\bar{L}}(\nabla_{\bar{I}}h_{c})_{K\bar{J}}\rangle,
\end{equation}
where $a,b,c, \in \{1,2,3\}$.
\begin{lemma}\label{Lem:S_ptwse}
Let $\gamma$ be as in Equation \ref{Eqn:Gamma_choice} and let $S_{abc}$ be as in Equation \ref{Eqn:KoisoD2}. Then
\begin{eqnarray*}
S_{111} = (n+m)^{2}\left(-n\tr(Q^{-1})+n\tr(Q^{-2})+(n+m)\tr(Q^{-2})-(n+m)\tr(Q^{-3})\right),\\
S_{121} = (n+m)^{2}\left(m\tr(U^{-1})-m\tr(U^{-2})-(n+m)\tr(U^{-2})+(n+m)\tr(U^{-3})\right),\\
S_{131} = (n+m)^{2}f_{\gamma}\left(\tr(U^{-1})-\tr(U^{-2})\right),\\
\\
S_{112} = -mn(n+m)^{2}\left(-n\tr(Q^{-1})+n\tr(Q^{-2})+(n+m)\tr(Q^{-2})-(n+m)\tr(Q^{-3})\right),\\
S_{122} = -n(n+m)^{2}f_{\gamma}\left(\tr(U^{-1})-\tr(U^{-2})\right),\\
S_{132} = -mn(n+m)^{2}f_{\gamma}\left(\tr(U^{-1})-\tr(U^{-2})\right),\\
\\
S_{1i3} = -nS_{1i1}  \quad \mathrm{for} \quad i\in\{1,2,3\},\\
\\
S_{211} = m(n+m)^{2}f_{\gamma}\left(\tr(U^{-1})-\tr(U^{-2})\right),\\
S_{221} = -mn(n+m)^{2}\left(m\tr(U^{-1})-m\tr(U^{-2})-(n+m)\tr(U^{-2})+(n+m)\tr(U^{-3})\right),\\
S_{231} = -mn(n+m)^{2}f_{\gamma}\left(\tr(U^{-1})-\tr(U^{-2})\right),\\
\\
S_{2i2} = S_{1i1} \quad \mathrm{for} \quad i\in\{1,2,3\},\\
\\
S_{2i3} = mS_{2i2} \quad \mathrm{for} \quad i\in\{1,2,3\},\\
\\
S_{311} = (n+m)^{2}f_{\gamma}\left(\tr(U^{-1})-\tr(U^{-2})\right),\\
S_{321} = -n(n+m)^{2}\left(m\tr(U^{-1})-m\tr(U^{-2})-(n+m)\tr(U^{-2})+(n+m)\tr(U^{-3})\right),\\
S_{331} = ,-n(n+m)^{2}f_{\gamma}\left(\tr(U^{-1})-\tr(U^{-2})\right),\\
\\
S_{312} = m(n+m)^{2}\left(-n\tr(Q^{-1})+n\tr(Q^{-2})+(n+m)\tr(Q^{-2})-(n+m)\tr(Q^{-3})\right),\\
S_{322} = (n+m)^{2}f_{\gamma}\left(\tr(U^{-1})-\tr(U^{-2})\right),\\
S_{332} = m(n+m)^{2}f_{\gamma}\left(\tr(U^{-1})-\tr(U^{-2})\right),\\
\\
S_{3i3} = S_{1i1} \quad \mathrm{for} \quad i\in\{1,2,3\}.
\end{eqnarray*}
\end{lemma}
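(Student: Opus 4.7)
The plan is to compute each $S_{abc}$ by the same two-step procedure that yielded Lemma \ref{Lem:D_ptwse}. First I would extract from Lemma \ref{Lem:IBPLem2} the pointwise expression for
\[
(\nabla_{\bar{J}}(h_{a})_{K\bar{L}})(\nabla^{\bar{I}}(h_{c})^{K\bar{J}});
\]
after the $K$ and $\bar{J}$ indices are summed, what remains is a rank-two tensor in the free indices $\bar{I}$ and $\bar{L}$, and inspection of Lemma \ref{Lem:IBPLem2} shows that every such quantity is a multiple of one of three prototypes: $g_{K\bar{L}}w_{K}\overline{w}_{I}$, $\delta_{i_{2}l_{2}}(Q^{-1}_{i_{1}l_{1}}-Q^{-1}_{i_{1}\alpha}Q^{-1}_{\alpha l_{1}})$, or $\delta_{i_{1}l_{1}}(U^{-1}_{l_{2}i_{2}}-U^{-1}_{l_{2}\alpha}U^{-1}_{\alpha i_{2}})$. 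These are precisely the three structural types whose contractions with the endomorphisms $(H_{b})$ are tabulated in Lemma \ref{Lem:Pairing}; only the symbolic label of the second free index differs ($\bar{L}$ here versus $\bar{J}$ there), and that relabeling does not affect the algebraic outcome, since Lemma \ref{Lem:Pairing} depends only on the shape of the contraction.

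Having identified the prototype and its $(n+m)^{2}$-prefactor from Lemma \ref{Lem:IBPLem2}, and having read off the pairing with $(H_{b})^{\bar{I}}_{\bar{L}}$ from Lemma \ref{Lem:Pairing}, I would multiply the two to obtain each stated value of $S_{abc}$. I would exploit the ``$1\leftrightarrow 2$'' symmetry recorded in Section~\ref{Sec:3} (swap $m\leftrightarrow n$, replace $U$ by the conjugate expression in $Q$, change sign) together with the identity $H_{3}=f_{\gamma}\,\mathrm{Id}$ to derive most of the auxiliary relations listed in the lemma, for instance $S_{2i3}=mS_{2i2}$, $S_{3i3}=S_{1i1}$, and $S_{2i1}=S_{1i2}$; this reduces the number of genuinely independent cases from $27$ to a small handful.

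The main obstacle is bookkeeping rather than mathematical. Unlike in the computation of $D_{abc}$, where $H_{b}$ contracted the two derivative indices $\bar{I}$ and $\bar{J}$, in $S_{abc}$ the endomorphism $H_{b}$ contracts one derivative index with one tensor index; consequently, in each of the nine $(a,c)$-cases one must verify that the surviving free indices of the pointwise product really are $\bar{I}$ and $\bar{L}$ (rather than $\bar{I}$ and $\bar{J}$) and then apply the appropriate entry of Lemma \ref{Lem:Pairing}. The only subtlety beyond index tracking is a final symbolic simplification, which, to match the style used elsewhere in the paper, I would carry out in the Matlab symbolic toolbox; no substantive new identities are required.
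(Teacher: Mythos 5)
Your proposal is correct and follows exactly the route the paper takes: the paper offers no separate argument for this lemma beyond the sentence ``We can then use Lemma \ref{Lem:Pairing} to complete the calculation,'' i.e.\ it too obtains each $S_{abc}$ by reading off the relevant product from Lemma \ref{Lem:IBPLem2}, observing that every such product is an $(n+m)^{2}$-multiple of one of the three prototype tensors in the free indices $\bar{I},\bar{L}$, and contracting with $(H_{b})^{\bar{I}}_{\bar{L}}$ via the (relabelled) entries of Lemma \ref{Lem:Pairing}, with the $1\leftrightarrow 2$ symmetry and $H_{3}=f_{\gamma}\,\mathrm{Id}$ cutting down the case count. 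Your explicit attention to the fact that $H_{b}$ now contracts a derivative index with a tensor index (so the surviving free pair is $\bar{I},\bar{L}$ rather than $\bar{I},\bar{J}$) is exactly the bookkeeping point one must get right, and it is handled correctly.
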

Again, we use Lemma \ref{Lem:integrals} to compute the $L^{2}$-inner product.Note that there is a factor of $2$ in switching from complex to Riemannian coordinates (Equation \ref{RiemCom_3}).
\begin{lemma}
Let $\mathcal{D}_{\gamma}$ be the EID as described in Theorem \ref{Thm:deform}. Then, in Riemannian coordinates, \\
$\langle \nabla_{i}\nabla_{j}(\mathcal{D}_{\gamma})_{kl},(\mathcal{D}_{\gamma})_{il}(\mathcal{D}_{\gamma})_{kj}\rangle = $
\begin{small}
\[
- \left[\dfrac{(m^{2}-1)(n^{2}-1)(m+n)^{2}(2m^{4}n^{4}-4m^{4}n^{2}-4m^{2}n^{4}+8m^{2}n^{2}+m^{4}+n^{4}-2m^{2}-2n^{2})}{n-m} \right]\int f_{\gamma}^{3}.
\]
\end{small}
\end{lemma}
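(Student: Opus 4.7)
The plan is to follow exactly the same pattern as the previous subsection, building on the pointwise computations already assembled in Lemma \ref{Lem:S_ptwse}. First I would invoke the integration by parts displayed at the start of the subsection: because $\mathcal{D}_\gamma$ is divergence-free, the two-derivative pairing reduces to
\[
-\bigl\langle \nabla_{\bar J}(\mathcal{D}_\gamma)_{K\bar L},\ (\mathcal{D}_\gamma)_{I\bar L}\nabla^{\bar I}(\mathcal{D}_\gamma)_{K\bar J}\bigr\rangle_{L^{2}},
\]
which is exactly the object encoded by the quantities $S_{abc}$ from \eqref{Eqn:KoisoD2}.

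Next I would expand $\mathcal{D}_\gamma = n(1-m^2)h_1 + m(1-n^2)h_2 + (n^2-m^2)h_3$ trilinearly inside this pairing. Writing $\alpha_1=n(1-m^2)$, $\alpha_2=m(1-n^2)$, $\alpha_3=n^2-m^2$, the result is the sum
\[
\sum_{a,b,c\in\{1,2,3\}}\alpha_a\alpha_b\alpha_c\,S_{abc},
\]
and Lemma \ref{Lem:S_ptwse} expresses each $S_{abc}$ as a polynomial in $f_\gamma$, $\tr(U^{-\alpha})$, and $\tr(Q^{-\alpha})$ for $\alpha\in\{1,2,3\}$, with coefficients polynomial in $m$ and $n$. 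The zeroth-order identity $\tr(Q^{-\alpha})=(n-m)+\tr(U^{-\alpha})$ eliminates the $Q$-traces, and Equation \eqref{eqn:f_gam} expresses $f_\gamma$ as $m^2-(n+m)\tr(U^{-1})$, so the integrand becomes a polynomial in $m$, $n$, and $\tr(U^{-\alpha})$ only.

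Then I would integrate over the Grassmannian using Lemma \ref{Lem:integrals}, which converts each product of traces into an explicit rational function of $m$ and $n$, and multiply by the factor of $2$ from Equation \eqref{RiemCom_3} that accounts for switching from complex to Riemannian coordinates. Finally, because $\int f_\gamma^3$ has been computed explicitly, one divides by it in order to display the answer in the stated proportional form.

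The only real obstacle is purely computational: the $27$-term expansion produces an enormous rational function in $m$ and $n$ that must be collapsed and recognised as the displayed polynomial times $\int f_\gamma^3$. In practice this simplification is carried out with Matlab's symbolic toolbox, precisely as the author has already done for the cubic and first gradient-squared terms; the useful consistency check is that the three contributions assemble into Koiso's obstruction \eqref{Koiobs}, whose nonvanishing for $n+m$ odd yields Theorem \ref{Thm:Main}.
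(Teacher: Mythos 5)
Your proposal matches the paper's argument exactly: integrate by parts using the divergence-free property, expand $\mathcal{D}_{\gamma}$ trilinearly into the quantities $S_{abc}$ of Lemma \ref{Lem:S_ptwse}, reduce everything to traces of powers of $U^{-1}$ via the $Q$-trace identity and Equation (\ref{eqn:f_gam}), integrate with Lemma \ref{Lem:integrals}, and apply the factor of $2$ from Equation (\ref{RiemCom_3}); the final collapse to a multiple of $\int f_{\gamma}^{3}$ is the same symbolic computation the paper performs. This is essentially the paper's own proof, correctly reconstructed.
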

\subsection{Proof of Theorem \ref{Thm:Main}}
\begin{proof}
For the special choice of $\gamma$ in Equation (\ref{Eqn:Gamma_choice}), let $\mathcal{D}_{\gamma}$ be the EID constructed in Theorem \ref{Thm:deform}. Koiso's obstruction integral \ref{Koiobs} is\\
$\mathcal{I}(\mathcal{D}_{\gamma}) =$
\begin{small}
\[
 2(n+m)\langle (\mathcal{D}_{\gamma})_{i}^{k}(\mathcal{D}_{\gamma})_{kj},(\mathcal{D}_{\gamma})_{ij}\rangle_{L^{2}}+3\langle\nabla_{i}\nabla_{j}(\mathcal{D}_{\gamma})_{kl},(\mathcal{D}_{\gamma})_{ij}(\mathcal{D}_{\gamma})_{kl} \rangle_{L^{2}}-6\langle\nabla_{i}\nabla_{l}(\mathcal{D}_{\gamma})_{kj},(\mathcal{D}_{\gamma})_{ij}(\mathcal{D}_{\gamma})_{kl}\rangle_{L^{2}}, 
\]
\end{small}
where we have used the fact that in our construction, the Einstein constant $\lambda=(n+m)$. Using Lemmas \ref{Lem:ZOterms}, \ref{Lem:IBPLem1}, and \ref{Lem:IBPLem2}, we compute
\[
\mathcal{I}(\mathcal{D}_{\gamma}) = \frac{4(m^{2}-1)^{2}(n^{2}-1)^{2}(m+n)^4(mn-1)}{n-m}\int f_{\gamma}^{3}.
\]
Thus we see that when $n\neq m$, the obstruction integral is, for {\it any} choice of ${\gamma\in \mathfrak{su}_{n+m}}$, a non-zero multiple of 
\[\int f_{\gamma}^{3}\] 
which, as discussed, does not vanish when $m\neq n$. Thus when $n+m$ is odd, the associated obstruction map in $\mathrm{Hom}_{G}(s^{2}(\mathfrak{g}),\mathfrak{g})$ does not vanish and so all the EIDs are obstructed to second order.
\end{proof}

\section{Koiso's original example revisited}\label{Sec:5}
In \cite{KoiOsaka2}, Koiso constructed EIDs for the product metric $g\oplus\tilde{g}$ on $\mathbb{CP}^{2n}\times \mathbb{CP}^{1}$. In our normalisation, given an eigenfunction $f_{\gamma}$ with eigenvalue $2(n+1)$, the tensor
\[
h=\nabla^{2}f_{\gamma}+(2n+1)f_{\gamma}g +(2n+1)(1-n)f_{\gamma}\tilde{g}. 
\]
is an EID.  To compute $\mathcal{I}(h)$, Koiso proves the following identities which we state for the Fubini--Study metric on $\mathbb{CP}^{n}$ with an arbitrary normalisation of its Einstein constant. 
\begin{lemma}[Koiso, Lemma 6.8 and Lemma 6.9 in \cite{KoiOsaka2}]
Let $g$ be the Fubini--Study metric on $\mathbb{CP}^{n}$ with Einstein constant $\lambda$ and let $f$ satisfy $\Delta f=2\lambda f$.  Furthermore, denote by $c$ the holomorphic sectional curvature of $g$ so that
\[
\mathrm{Rm}^{\;j\;l}_{i\;k} =  c(\delta_{i}^{j}\delta_{k}^{l}+\delta_{i}^{l}\delta_{k}^{j}),
\]  
then the following identities hold
\begin{eqnarray}
\langle (\nabla_{i}\nabla_{j}f)(\nabla^{j}\nabla_{k}f),(\nabla_{i}\nabla_{k}f)\rangle_{L^{2}} = \lambda^{3}\int f^{3}, \label{Eqn:HessKoi_1}\\
\langle (\nabla_{i}\nabla_{j}\nabla_{k}\nabla_{l}f),(\nabla_{i}\nabla_{j}f)(\nabla_{k}\nabla_{l}f)\rangle_{L^{2}} = -2c\lambda^{3}\int f^{3}, \label{Eqn:HessKoi_2}\\
\langle (\nabla_{i}\nabla_{k}\nabla_{j}\nabla_{l}f),(\nabla_{i}\nabla_{j}f)(\nabla_{k}\nabla_{l}f)\rangle_{L^{2}} = -c\lambda^{3}\int f^{3}. \label{Eqn:HessKoi_3}
\end{eqnarray}
\end{lemma}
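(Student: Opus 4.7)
The plan is to exploit the explicit description of the first eigenspace of the Laplacian on $\mathbb{CP}^{n}$ developed in Section \ref{Sec:3}. Specialising Equation (\ref{f_def}) to $m=1$, every $f$ with $\Delta f = 2\lambda f$ (where $\lambda = n+1$ is the Einstein constant of the normalisation of Section \ref{Sec:3}) is of the form $f_{\gamma}$ for some $\gamma \in \sqrt{-1}\mathfrak{su}_{n+1}$. Each of the three stated identities equates two $SU_{n+1}$-invariant cubic forms on this eigenspace; since $\mathrm{Hom}_{SU_{n+1}}(s^{2}(\mathfrak{su}_{n+1}),\mathfrak{su}_{n+1})$ is one-dimensional (see Section 2.4), the space of invariant cubic forms on $\mathfrak{su}_{n+1}$ is also one-dimensional, so each side is a scalar multiple of $\int f_{\gamma}^{3}$. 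It therefore suffices to verify each identity for the single convenient choice $\gamma = \mathrm{Diag}(-n,1,1,\ldots,1)$, i.e.\ Equation (\ref{Eqn:Gamma_choice}) specialised to $m=1$. A standard scaling argument ($g \mapsto \mu g$ forces $\lambda \mapsto \lambda/\mu$ and $c \mapsto c/\mu$, and both sides transform with the same weight) then handles arbitrary $\lambda$.

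For identity (\ref{Eqn:HessKoi_1}), Lemma \ref{GG_Lem} together with the observation immediately after Theorem \ref{Thm:deform} that $h_{2}=f_{\gamma}g$ when $m=1$ yields $\mathrm{Hess}(f_{\gamma}) = h_{1}-f_{\gamma}g$. The pointwise integrand in complex coordinates then expands as
\[
\mathrm{tr}\bigl((H_{1}-f_{\gamma} I)^{3}\bigr) \;=\; \mathrm{tr}(H_{1}^{3}) - 3f_{\gamma}\|h_{1}\|^{2} + 3f_{\gamma}^{2}\,\mathrm{tr}(H_{1}) - n f_{\gamma}^{3},
\]
and each summand is already read off from Lemma \ref{Lem:Z_ptwse} and Equations (\ref{quad_1})--(\ref{quad_6}) specialised to $m=1$. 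Integrating via Lemma \ref{Lem:integrals} and applying the complex-to-Riemannian factor from Equation (\ref{RiemCom_1}) produces a rational function of $n$ times $\int f_{\gamma}^{3}$; the claim is that this equals $(n+1)^{3}\int f_{\gamma}^{3}$.

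For identities (\ref{Eqn:HessKoi_2}) and (\ref{Eqn:HessKoi_3}), I would use the integration-by-parts manoeuvre of Sections 4.3 and 4.4, writing
\[
\langle \nabla_{i}\nabla_{j}h_{kl},h_{ij}h_{kl}\rangle_{L^{2}} \;=\; -\langle \nabla_{j}h_{kl},(\nabla_{i}h_{ij})h_{kl}+h_{ij}\nabla_{i}h_{kl}\rangle_{L^{2}},
\]
and similarly for the other contraction. With $h=\mathrm{Hess}(f_{\gamma})$ the Weitzenb\"ock formula for 1-forms combined with $\Delta f_{\gamma}=2\lambda f_{\gamma}$ expresses $\nabla^{i}h_{ij}$ as an explicit multiple of $\nabla_{j}f_{\gamma}$, so the awkward $(\nabla_{i}h_{ij})$ term reduces to a pairing $\langle \nabla h,\nabla f_{\gamma}\otimes h\rangle$ which itself yields to one further integration by parts. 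The remaining pieces are exactly the $\nabla h\cdot \nabla h$ pairings tabulated in Lemmas \ref{Lem:IBPLem1} and \ref{Lem:IBPLem2}, evaluated at the $m=1$ specialisations of $h_{1},h_{2},h_{3}$.

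The main technical obstacle is the bookkeeping: tracking the factors of $2$ and $4$ in the complex-to-Riemannian conversion (Lemma \ref{Lem:ComptoRiem}), pinning down the holomorphic sectional curvature $c$ in the normalisation of Section \ref{Sec:3} (from $R_{I\bar{J}K\bar{L}}\big|_{W=0}=g_{I\bar{J}}g_{K\bar{L}}+g_{I\bar{L}}g_{K\bar{J}}$), and checking that the Selberg-type integrals of Lemma \ref{Lem:integrals} specialised to $m=1$ collapse to produce exactly the coefficients $\lambda^{3}$, $-2c\lambda^{3}$, and $-c\lambda^{3}$ of $\int f_{\gamma}^{3}$. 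Consistency checks are provided by the uniform scaling weight of both sides under $g\mapsto \mu g$ and by the direct comparison with Koiso's original statements in \cite{KoiOsaka2}.
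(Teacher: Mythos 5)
Your proposal is correct and is essentially the route taken in Section \ref{Sec:5}: reduce to the single $\gamma$ of Equation (\ref{Eqn:Gamma_choice}) with $m=1$ by $SU_{n+1}$-invariance and scaling, write $\mathrm{Hess}(f_{\gamma})=h_{1}-h_{2}$ via Lemma \ref{GG_Lem} (with $h_{2}=f_{\gamma}g$ when $m=1$), evaluate the cubic trace from Lemma \ref{Lem:Z_ptwse}, evaluate the second-derivative terms from the $D_{abc}$ and $S_{abc}$ tables (Lemmas \ref{Lem:D_ptwse} and \ref{Lem:S_ptwse}), integrate with Lemma \ref{Lem:integrals}, and convert with the factors of Lemma \ref{Lem:ComptoRiem}. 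The one genuine point of departure is that you retain the divergence term $(\nabla_{i}h_{ij})$ in the integration by parts, and this is the right instinct: unlike $\mathcal{D}_{\gamma}$, the tensor $\mathrm{Hess}(f_{\gamma})$ is \emph{not} divergence-free (by Lemma \ref{Lem_div}, $\overline{\partial}^{\ast}(\sigma_{1}-\sigma_{2})=-(n+m)\sqrt{-1}\partial f_{\gamma}$), a point the paper passes over in silence. The Bochner formula gives $\nabla^{i}(\mathrm{Hess}f_{\gamma})_{ij}=-\lambda\nabla_{j}f_{\gamma}$, and after one further integration by parts the extra contribution reduces to $\lambda^{2}\int f_{\gamma}\|\mathrm{Hess}f_{\gamma}\|^{2}$ (plus, for the third identity, curvature contractions that are disposed of using the constant-holomorphic-sectional-curvature form of $\mathrm{Rm}$); a short computation with Lemma \ref{Lem:integrals} shows $\int f_{\gamma}\|\mathrm{Hess}f_{\gamma}\|^{2}=0$, so the omitted term is harmless and your answer agrees with the paper's. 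The only thing to add to your write-up is the standard fact that the $2\lambda$-eigenspace of $\Delta$ on $\mathbb{CP}^{n}$ is exactly $\{f_{\gamma}:\gamma\in\sqrt{-1}\,\mathfrak{su}_{n+1}\}$, which is what licenses verifying the identities at a single $\gamma$.
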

We now consider these identities with respect to our construction of the Fubini--Study metric on $\mathbb{CP}^{n}$ (which has $\lambda=(n+1)$)  and for the eigenfunction $f_{\gamma}$ with $\gamma$ given by Equation (\ref{Eqn:Gamma_choice}). Using the Gasqui--Goldschmidt Lemma \ref{GG_Lem}, we have
\[
\nabla^{2}f_{\gamma} = h_{1}-h_{2}.
\]
To compute the quantity in Equation (\ref{Eqn:HessKoi_1}) we consider, recalling $Z_{abc}$ defined by Equation (\ref{EqnZ_defn}), we need to calculate
\[
\int\left( (Z_{111}-Z_{222})+(Z_{221}-Z_{112})+(Z_{212}-Z_{121})+(Z_{122}-Z_{211})\right).
\]
The results of Lemma \ref{Lem:Z_ptwse}  yield
\[
\int\left((Z_{111}-Z_{222})+(Z_{221}-Z_{112})+(Z_{212}-Z_{121})+(Z_{122}-Z_{211})\right) = \frac{(n+m)^{3}}{2}\int f_{\gamma}^{3},
\]
and so we see we recover the identity (\ref{Eqn:HessKoi_1}) on setting $m=1$ and using the complex to Riemannian coordinate scaling in Equation (\ref{RiemCom_1}). \\
\\
The identities (\ref{Eqn:HessKoi_2}) and (\ref{Eqn:HessKoi_3}) are similar; we consider $D_{abc}$ defined by (\ref{Eqn:KoisoD1}) and $S_{abc}$ defined by (\ref{Eqn:KoisoD2}), and consider the quantities
\[
-\int\left((D_{111}-D_{222})+(D_{221}-D_{112})+(D_{212}-D_{121})+(D_{122}-D_{211})\right)
\]
and 
\[
-\int\left((S_{111}-S_{222})+(S_{221}-S_{112})+(S_{212}-S_{121})+(S_{122}-S_{211})\right).
\]
The first of these, calculated with the formulae from Lemmas \ref{Lem:integrals} and \ref{Lem:D_ptwse}, yields
\[
\hspace{-25pt}-\int\left((D_{111}-D_{222})+(D_{221}-D_{112})+(D_{212}-D_{121})+(D_{122}-D_{211})\right) = 
\]
\[
\hspace{25pt}\qquad -\frac{(n+m)^{2}(mn+1)}{2}\int f_{\gamma}^{3}.
\]
The second,  calculated with the formula from Lemmas \ref{Lem:integrals} and \ref{Lem:S_ptwse}, yields
\[
\hspace{-25pt}-\int\left((S_{111}-S_{222})+(S_{221}-S_{112})+(S_{212}-S_{121})+(S_{122}-S_{211})\right)
\]
\[
\hspace{25pt}\qquad  =-\frac{(n+m)^{2}(mn+1)}{2}\int f_{\gamma}^{3}.
\]
In our normalisation, the holomorphic sectional curvature for $\mathbb{CP}^{n}$ is $c=1$ and so we recover Koiso's identities (\ref{Eqn:HessKoi_2}) and (\ref{Eqn:HessKoi_3}) after setting $m=1$ and applying the  complex to Riemannian coordinate scalings in Equations (\ref{RiemCom_2}) and (\ref{RiemCom_3}).

\bibliographystyle{acm} 
\bibliography{RigidityRefs}

\begin{thebibliography}{10}

\bibitem{AvM}
{\sc Adler, M., and van Moerbeke, P.}
\newblock Integrals over {G}rassmannians and random permutations.
\newblock {\em Adv. Math. 181}, 1 (2004), 190--249.

\bibitem{BHMW}
{\sc Batat, W., Hall, S.~J., Murphy, T., and Waldron, J.}
\newblock Rigidity of {$SU_{n}$}-{T}ype {S}ymmetric {S}paces.
\newblock {\em Int. Math. Res. Not. IMRN}, 3 (2024), 2066--2098.

\bibitem{Bes}
{\sc Besse, A.~L.}
\newblock {\em Einstein manifolds}.
\newblock Classics in Mathematics. Springer-Verlag, Berlin, 2008.
\newblock Reprint of the 1987 edition.

\bibitem{GGbook}
{\sc Gasqui, J., and Goldschmidt, H.}
\newblock {\em Radon transforms and the rigidity of the {G}rassmannians},
  vol.~156 of {\em Annals of Mathematics Studies}.
\newblock Princeton University Press, Princeton, NJ, 2004.

\bibitem{HMW}
{\sc Hall, S.~J., Murphy, T., and Waldron, J.}
\newblock Compact {H}ermitian symmetric spaces, coadjoint orbits, and the
  dynamical stability of the {R}icci flow.
\newblock {\em J. Geom. Anal. 31}, 6 (2021), 6195--6218.

\bibitem{Kan}
{\sc Kaneko, J.}
\newblock Selberg integrals and hypergeometric functions associated with {J}ack
  polynomials.
\newblock {\em SIAM J. Math. Anal. 24}, 4 (1993), 1086--1110.

\bibitem{KoiOsaka1}
{\sc Koiso, N.}
\newblock Rigidity and stability of {E}instein metrics - the case of compact
  symmetric spaces.
\newblock {\em Osaka J. Math. 17\/} (1980), 51--73.

\bibitem{KoiOsaka2}
{\sc Koiso, N.}
\newblock Rigidity and infinitesimal deformability of {E}instein metrics.
\newblock {\em Osaka J. Math. 19\/} (1982), 643--668.

\bibitem{Koi_EMCS}
{\sc Koiso, N.}
\newblock Einstein metrics and complex structures.
\newblock {\em Invent. math 73\/} (1983), 71--106.

\bibitem{KlKr1}
{\sc Kr\"oncke, K.}
\newblock Stability of {E}instein metrics under {R}icci flow.
\newblock {\em Comm. Anal. Geom. 28}, 2 (2020), 351--394.

\bibitem{Mat}
{\sc Matsushima, Y.}
\newblock Remarks on {K}\"ahler--{E}instein manifolds.
\newblock {\em Nagoya Math. J. 46\/} (1972), 161--173.

\bibitem{NagSem23}
{\sc Nagy, P.-A., and Semmelmann, U.}
\newblock Second order {E}instein deformations.
\newblock {\em preprint\/} (2023).

\bibitem{SchSem24}
{\sc Schwahn, P., and Semmelmann, U.}
\newblock On the rigidity of the complex {G}rassmannians.
\newblock {\em preprint\/} (2024).

\end{thebibliography}

\end{document}